\def\todaysdate{8\textsuperscript{th} December 2025}
\newcommand{\doublearrow}[1]{\xrightarrow[]{#1}\mathrel{\mkern-14mu}\rightarrow}
\definecolor{lightblue}{rgb}{0.8,0.8,1}
\definecolor{carmine}{rgb}{0.59, 0.0, 0.09}
\setlist[enumerate]{nosep}
\setlist[itemize]{nosep}
\numberwithin{equation}{section}
\numberwithin{figure}{section}
\definecolor{vdarkred}{rgb}{0.7,0,0}
\declaretheoremstyle[
  spaceabove=\topsep,
  spacebelow=\topsep,
  headpunct=,
  numbered=no,
  postheadspace=1ex,
  headfont=\color{vdarkred}\normalfont\bfseries,
  bodyfont=\normalfont\itshape,
]{colored}
\declaretheoremstyle[
  spaceabove=\topsep,
  spacebelow=\topsep,
  headpunct=,
  numbered=no,
  postheadspace=1ex,
  headfont=\normalfont\bfseries,
  bodyfont=\normalfont\itshape,
]{italic}
\declaretheoremstyle[
  spaceabove=\topsep,
  spacebelow=\topsep,
  headpunct=,
  numbered=no,
  postheadspace=1ex,
  headfont=\normalfont\bfseries,
  bodyfont=\normalfont\upshape,
]{upright}
\declaretheorem[style=italic,name=Theorem,numbered=yes,numberwithin=section]{thm}
\declaretheorem[style=italic,name=Lemma,numbered=yes,numberlike=thm]{lem}
\declaretheorem[style=italic,name=Proposition,numbered=yes,numberlike=thm]{prop}
\declaretheorem[style=italic,name=Conjecture,numbered=yes,numberlike=thm]{conjecture}
\declaretheorem[style=upright,name=Definition,numbered=yes,numberlike=thm]{defn}
\declaretheorem[style=upright,name=Remark,numbered=yes,numberlike=thm]{rmk}
\declaretheorem[style=upright,name=Notation,numbered=yes,numberlike=thm]{notation}
\declaretheorem[style=upright,name=Convention,numbered=yes,numberlike=thm]{convention}
\renewcommand*{\@seccntformat}[1]{\upshape\csname the#1\endcsname.\hspace{1ex}}
\renewcommand*{\section}{\@startsection{section}{1}{\z@}%
	{2.5ex \@plus 1ex \@minus 0.2ex}%
	{1.5ex \@plus 0.2ex}%
	{\normalfont\normalsize\bfseries}}
\renewcommand*{\subsection}{\@startsection{subsection}{2}{\z@}%
	{2.5ex \@plus 1ex \@minus 0.2ex}%
	{-1.5ex \@plus -0.2ex}%
	{\normalfont\normalsize\bfseries}}
\renewcommand*{\subsubsection}{\@startsection{subsubsection}{3}{\z@}%
	{2.5ex \@plus 1ex \@minus 0.2ex}%
	{-1.5ex \@plus -0.2ex}%
	{\normalfont\normalsize\bfseries}}
\renewcommand*{\paragraph}{\@startsection{paragraph}{4}{\z@}%
	{2.5ex \@plus 1ex \@minus 0.2ex}%
	{-1.5ex \@plus -0.2ex}%
	{\normalfont\normalsize\bfseries}}
\renewcommand*{\subparagraph}{\@startsection{subparagraph}{5}{\z@}%
	{2.5ex \@plus 1ex \@minus 0.2ex}%
	{-1.5ex \@plus -0.2ex}%
	{\normalfont\normalsize\slshape}}
\newcommand{\la}{\bar{\lambda}}
\newcommand{\FA}{\mathscr F_{\bar{i},\cN}}
\newcommand{\LA}{\mathscr L_{\bar{i},\cN}}
\newcommand{\FAnn}{\mathscr F_{\bar{i},\cN+1}}
\newcommand{\LAnn}{\mathscr L_{\bar{i},\cN+1}}
\newcommand{\FAM}{\mathscr F_{\bar{i},\cM}}
\newcommand{\LAM}{\mathscr L_{\bar{i},\cM}}
\newcommand{\FFA}{ F_{\bar{i},\cN}}
\newcommand{\LLA}{ L_{\bar{i},\cN}}
\newcommand{\FFAM}{F_{\bar{i},\cM}}
\newcommand{\LLAM}{ L_{\bar{i},\cM}}
\newcommand{\PJ}{J{ \Gamma}^{\bar{N}}(\beta_n)}
\newcommand{\PA}{\Gamma^{\cN}(\beta_n)}
\newcommand{\PAM}{\Gamma^{\mathcal M}(\beta_n)}
\newcommand{\PAA}{\hat{\Gamma}^{\cN}(K)}
\newcommand{\PAAJ}{\hat{\Gamma}^{\cN,J}(K)}
\newcommand{\PAAW}{\hat{\Gamma}^{\cN,W}(K)}
\newcommand{\PAANJ}{\hat{\Gamma}^{\cN+1,J}(K)}
\newcommand{\PAAr}{\hat{\Gamma}^{R,\cN}(\beta_n)}
\newcommand{\LL}{\mathbb L}
\newcommand{\LLh}{\hat{\mathbb L}}
\newcommand{\LLhJ}{\hat{\mathbb L}^J}
\newcommand{\LLhW}{\hat{\mathbb L}^W}
\newcommand{\LLH}{\mathbb L^H}
\newcommand{\LLr}{\mathbb L^R}
\newcommand{\LLNJ}{\hat{\mathbb L}^{J}_{\mathcal N}}
\newcommand{\LLNW}{\hat{\mathbb L}^{W}_{\mathcal N}}
\newcommand{\LLNNJ}{\hat{\mathbb L}^{J}_{\mathcal N+1}}
\newcommand{\LLNNW}{\hat{\mathbb L}^{W}_{\mathcal N+1}}
\newcommand{\LaJ}{\hat{\Lambda}^{J}}
\newcommand{\LNJ}{\Lambda^{J}_{\mathcal N}}
\newcommand{\LMJ}{\Lambda_{\mathcal M}^{J}}
\newcommand{\LNt}{\hat{\Lambda}_{\mathcal N}}
\newcommand{\LNtJ}{\hat{\Lambda}_{\mathcal N}^{J}}
\newcommand{\LNNtJ}{\hat{\Lambda}_{\mathcal N+1}^{J}}
\newcommand{\LNNJ}{\Lambda_{\mathcal N+1}^{J}}
\newcommand{\I}{{\large \hat{{\Huge {\Gamma}}}}(K)}
\newcommand{\IR}{{\large \hat{\Huge {\Gamma^R}}}}
\newcommand{\AM}{\Phi^{\mathcal M}(K)}
\newcommand{\AN}{\Phi^{\mathcal N}(K)}
\newcommand{\ANtJ}{\hat{J}^{\mathcal N}(K)}
\newcommand{\ANNtJ}{\hat{J}^{\mathcal N+1}(K)}
\newcommand{\AaJ}{\hat{J}(K)}
\newcommand{\IJJ}{{\large \hat{{\Huge {\Gamma}}}^{J}}(K)}
\newcommand{\IW}{{\large \hat{{\Huge {\Gamma}}}^{W}}(K)}
\newcommand{\AMJ}{J_{\mathcal M}(K)}
\newcommand{\ANJ}{J_{\mathcal N}(K)}
\newcommand{\CCC}{\ddot{\C}}
\newcommand{\usR}{\psi^{R}_{\mathcal N}}
\newcommand{\suJ}{\hat{\psi}^J}
\newcommand{\usn}{\psi^{u}_{\mathcal N}}
\newcommand{\usnJ}{\psi^{u,J}_{\mathcal N}}
\newcommand{\usnnJ}{\psi^{u,J}_{\mathcal N+1}}
\newcommand{\usmJ}{\psi^{u,J}_{\mathcal M}}
\newcommand{\snJ}{\psi^{J}_{\mathcal N}}
\newcommand{\smJ}{\psi^{J}_{\mathcal M}}
\newcommand{\sntJ}{\tilde{\psi}^{J}_{\mathcal N}}
\newcommand{\snntJ}{\tilde{\psi}^{J}_{\mathcal N+1}}
\newcommand{\snbJ}{\bar{\psi}^{\mathcal N,J}_{\mathcal N}}
\newcommand{\smnbJt}{\hat{\psi}^{\mathcal N,J}_{\mathcal M}}
\newcommand{\snbJt}{\hat{\psi}^{\mathcal N,J}_{\mathcal N}}
\newcommand{\snbbbJ}{\bar{\psi}^{J}_{\mathcal N+1}}
\newcommand{\snbb}{\bar{\bar{\psi}}_{\mathcal N}}
\newcommand{\snbbJ}{\bar{\bar{\psi}}^{J}_{\mathcal N}}
\newcommand{\snnbbJ}{\bar{\bar{\psi}}^{J}_{\mathcal N+1}}
\newcommand{\PPNJ}{\text{pr}_{\mathcal{N}}^{J}}
\newcommand{\PMNJ}{p^{\mathcal{N},J}_{\mathcal{M}}}
\newcommand{\PMNNJ}{p^{\mathcal{N},J}_{\mathcal{M}}}
\newcommand{\PNNJ}{p^{\mathcal{N},J}_{\mathcal{N}}}
\newcommand{\PhM}{\hat{p}_{\mathcal{M}}}
\newcommand{\PhNJ}{\hat{p}_{\mathcal{N}}^{J}}
\newcommand{\N}{\mathbb N}
\newcommand{\Z}{\mathbb Z}
\newcommand{\Imm}{\mathrm{Im}}
\newcommand{\Conf}{\mathrm{Conf}}
\definecolor{dgreen}{RGB}{0,150,0}
\newcommand{\incl}[3][right]%
{%
\draw[<-,>=#1 hook] #2 to ($ #2!0.5!#3 $);
\draw[->,>=stealth'] ($ #2!0.5!#3 $) to #3;%
}
\newcommand{\inclusion}[5][right]%
{%
\draw[<-,>=#1 hook] #4 to ($ #4!0.5!#5 $) node[#2,font=\small]{#3};
\draw[->,>=stealth'] ($ #4!0.5!#5 $) to #5;%
}
\newcommand{\bl}{\bar{l}}
\newcommand{\cD}{\mathcal{D}}
\newcommand{\cM}{\mathcal{M}}
\newcommand{\cN}{\mathcal{N}}
\newcommand{\C}{\mathbb{C}}
\newcommand{\Clm}{C_{n,m}^{\bl}}
\newcommand{\ClmN}{C_{n,m}^{\bl,\bar{\cN}}}
\newcommand{\ccN}{N^C}
\newcommand{\ccl}{\lambda^C}
\newcommand{\FJ}{\mathscr F_{\bar{i},\bar{N}}}
\newcommand{\LJ}{\mathscr L_{\bar{i},\bar{N}}}
\newcommand{\HJ}{H^{\bar{\cN}(\bar{N})}_{n,2+\sum_{i=2}^{n} (\ccN_i-1)}}
\newcommand{\HJd}{H^{\bar{\cN}(\bar{N}),\partial}_{n,2+\sum_{i=2}^{n} (\ccN_i-1)}}
\newcommand{\HJi}{H^{\bar{\cN}(\bar{N})}_{n,m_J(\bar{N})}}
\newcommand{\HJdi}{H^{\bar{\cN}(\bar{N}),\partial}_{n,m_J(\bar{N})}}
\newcommand{\FJJ}{\mathscr F'_{\bar{i},\bar{N}}}
\newcommand{\LJJ}{\mathscr L'_{\bar{i},\bar{N}}}
\newcommand{\HJJ}{H^{\bar{\cN}(\bar{N})}_{n,1+\sum_{i=2}^{n} (\ccN_i-1)}}
\newcommand{\HJJd}{H^{\bar{\cN}(\bar{N}),\partial}_{n,1+\sum_{i=2}^{n} (\ccN_i-1)}}
\newcommand{\FJJJ}{\mathscr F''_{\bar{i},\bar{N}}}
\newcommand{\LJJJ}{\mathscr L''_{\bar{i},\bar{N}}}
\newcommand{\HJJJ}{H^{\bar{\cN}(\bar{N})}_{n,\sum_{i=2}^{n} (\ccN_i-1)}}
\newcommand{\HJJJd}{H^{\bar{\cN}(\bar{N}),\partial}_{n,\sum_{i=2}^{n} (\ccN_i-1)}}
\newcommand{\sJ}{\psi_{q,\bar{N},\overline{[N]}}}
\newcommand{\sA}{\psi_{\xi_{\cN},\bar{\lambda},\overline{d(\lambda)}}}
\newcommand{\sJt}{\psi_{\bar{N}}}
\newcommand{\sAt}{\psi^{\bar{\lambda}}_{\cN}}
\newcommand{\sJtt}{\psi^{1}_{q,\bar{N},\overline{[N]}}}
\newcommand{\sAtt}{\psi^{1-\cN}_{\xi_{\cN},\bar{\lambda},\overline{d(\lambda)}}}
\newcommand{\sAtu}{\psi^{\bar{\lambda}}_{\cM}}
\newcommand{\sAtuJ}{\psi^{J}_{\cM}}
\newcommand{\HAi}{H^{\bar{\cN}}_{n,m_A(\cN)}}
\newcommand{\HAdi}{H^{\bar{\cN},\partial}_{n,m_A(\cN)}}
\newcommand{\HA}{H^{\bar{\cN}}_{n,2+(n-1)(\cN-1)}}
\newcommand{\HAd}{H^{\bar{\cN},\partial}_{n,2+(n-1)(\cN-1)}}
\newcommand{\FAA}{\mathscr F'_{\bar{i},\cN}}
\newcommand{\LAA}{\mathscr L'_{\bar{i},\cN}}
\newcommand{\LAV}{\mathscr F^{c}_{\bar{i},\cN}}
\newcommand{\HAA}{H^{\bar{\cN}}_{n,1+(n-1)(\cN-1)}}
\newcommand{\HAAd}{H^{\bar{\cN},\partial}_{n,1+(n-1)(\cN-1)}}
\newcommand{\FAAA}{\mathscr F''_{\bar{i},\cN}}
\newcommand{\LAAA}{\mathscr L''_{\bar{i},\cN}}
\newcommand{\HAAA}{H^{\bar{\cN}}_{n,(n-1)(\cN-1)}}
\newcommand{\HAAAd}{H^{\bar{\cN},\partial}_{n,(n-1)(\cN-1)}}
\renewcommand{\geq}{\geqslant}
\renewcommand{\leq}{\leqslant}
\renewcommand{\footnoterule}{%
  \kern -3pt
  \hrule width \textwidth height 0.4pt
  \kern 2.6pt
}
\definecolor{dgreen}{RGB}{0,150,0}
\begin{document}
\title{\vspace{-14mm} \Large\bfseries Geometric universal Jones invariant from configurations on ovals in the disc }
\author{ \small Cristina Anghel \quad $/\!\!/$\quad \todaysdate\vspace{-3ex}}
\date{}
\maketitle
{
\makeatletter
\renewcommand*{\BHFN@OldMakefntext}{}
\makeatother
\footnotetext{\textit{Key words and phrases}: Quantum invariants, Topological models, Witten-Reshetikhin-Turaev invariants.}
}
\vspace{-7mm}
\begin{abstract}

We construct geometrically a {\bf \em universal Jones invariant} as a limit of {invariants given by graded intersections in configuration spaces}. For any fixed level $\cN$, we define a new knot invariant, called {\em``$\cN^{th}$ Unified Jones invariant''} globalising topologically all coloured Jones polynomials at levels less than $\cN$. It is defined via the intersection points between {Lagrangian submanifolds} supported on {\em arcs and ovals} in the disc. The geometry of these Lagrangians is novel: previous topological models involved immersed submanifolds rather than embedded ones. We do this by defining a new local system that refines the Lawrence representation, and depends of the distribution of multiplicities of points in the configuration space on the ovals.


On the algebraic side, Habiro's famous invariant for knots \cite{H3} is a universal invariant globalising the family of coloured Jones polynomials. He conjectured that this universal invariant recovers also the ADO invariant divided by the Alexander polynomials, which was proved by Willetts \cite{W} in a version of Habiro's ring (\cite{H2}). The universal Jones invariant that we construct belongs to a different ring that comes with a map to Habiro's ring \cite{H2}. 

We prove that our invariant recovers this version of Habiro's invariant. The difference is that our invariant is given as a limit of new knot invariants, the $\cN^{th}$ unified Jones invariants. These invariants in turn provide a geometrical understanding of sets of all coloured Jones polynomials of bounded colour, collecting more information as we increase the colour. 

 
 \end{abstract}
 
 \vspace{-1mm}

\section{Introduction}\label{introduction}

\vspace{-3mm} Coloured Jones and coloured Alexander polynomials are quantum link invariants coming from representation theory (\cite{J}, \cite{RT}, \cite{Witt}). The geometry encoded by these invariants is an important open problem in quantum topology. Physics predicts that their asymptotics encode rich geometrical information such as the Volume Conjecture (Kashaev \cite{K}, \cite{M2}) and Gukov-Manolescu's Conjecture (\cite{GM}). At the asymptotic level, Habiro (\cite{H3}) introduced his celebrated universal invariant for knots which is power series that unifies and recovers all coloured Jones polynomials of a knot.
The story for quantum link invariants is completely different. Until now, there were no universal invariants defined for the link case. Our aim is to study quantum invariants from a new topological perspective, given by `` topological models'', which are graded intersections in configuration spaces. The aim of this paper is two sided: first, we construct a universal Jones invariant of a purely topological nature. Secondly, we define the first topological models for quantum link invariants, providing a set-up for a sequel article \cite{CrI} where we answer the open problem of constructing universal geometrical link invariants.

{\bf Problem:} {\bf \em Unification of all coloured Jones polynomials  of a bounded level}

 For a fixed $\cN$, can one construct an invariant that recovers all  coloured Jones polynomials of colour bounded by $\cN$? Can we do this in a purely topological manner? We will answer this problem.
\vspace{-1mm}
\begin{thm}[{\bf\em  New level $\cN$ invariants}] For each $\cN$, we construct a knot invariant $\PAAJ$ via Lagrangian intersections in a fixed configuration space in the disc. We call $\PAAJ$ the \\ {\color{blue} $\cN^{th}$ unified Jones invariant} and prove that it unifies all coloured Jones polynomials up to level $\cN$:
\begin{equation}
 \PAAJ|_{\smnbJt}=J_{\cM}(K), \ \ \forall \cM \leq \cN.
\end{equation}
\end{thm}
Then we show that the sequence of $\cN^{th}$ unified Jones invariants has a good geometrical behaviour with respect to the change of colour and we globalise then in a {\em universal Jones invariant}, as below.

\vspace{-1mm}
\begin{thm}[\bf\em Universal geometrical Jones invariant]\label{THEOREMUnivJ}
We define a knot invariant $\IJJ$ taking values in a completed ring  $\LLhJ$ that recovers all coloured Jones polynomials. 
This invariant $\IJJ$ is a limit of the {unified Jones invariants} defined in configuration spaces:
\begin{equation}
\IJJ:= \underset{\longleftarrow}{\mathrm{lim}} \ \PAAJ \in \LLhJ.
\end{equation}
\end{thm}
The construction of the universal invariant is done in three main steps, as below.
\begin{enumerate}
\item  We provide the {\em  first topological model} for {\em coloured Alexander polynomials and coloured Jones polynomials for coloured links} (Theorem \ref{THEOREMA})
\item Then we construct  new geometric knot invariants at level $\cN$:\\ {\color{blue} \em $\cN^{th}$ unified Jones invariant} (Theorem \ref{INV})
\item Then we obtain the  {\em Geometric Universal knot invariant from configurations on ovals:}\\  {\color{blue} \em Universal geometrical Jones invariant} (Theorem  \ref{THJ}).
\end{enumerate}
{\bf Construction} The geometrical part of the construction is done in four main steps. First, for a fixed level $\cN$ and a braid $\beta_n$, we define a state sum of Lagrangian intersections: $$\PA \in \Z[u^{\pm1},x^{\pm1}, d^{\pm1}]$$ in the {configuration space of $(n-1)(\cN-1)+2$} points in the  punctured disc. This is given by the set of Lagrangian intersections $\{\langle (\beta_{n} \cup {\mathbb I}_{n+2} ) \ { \color{red} \FA}, {\color{dgreen} \LA} \rangle\}$ from Figure \ref{ColouredAlex}. We prove the following. 
 
\begin{thm}[\bf \em Interpolating all coloured Jones polynomials of bounded level and the $\cN^{th}$ ADO invariant] For any fixed level $\cN \in \N$ and $K=\hat{\beta_n}$ a knot seen as a closure of a braid $\beta_n$, the graded intersection $\PA$ recovers all coloured Jones polynomials up to level $\cN$ and also the $\cN^{th}$ coloured Alexander polynomial via specialisation of coefficients (as in Figure \ref{FigIntro}).
\end{thm}
\vspace{-2.8mm}
\begin{figure}[H]
\begin{center}
\hspace*{-4mm}\begin{tikzpicture}
[x=1.2mm,y=1.4mm]
\node (Jl)               at (-61,10)    {$ \LLNJ \ni \PAAJ$};

\node (Jn)               at (-40,-2)    {$J_{\cN}(K)\ \ \ \cdots$};
\node (Jn-1)               at (-65,-2)    {$J_{\cN-1}(K)$};
\node (Jm)               at (-85,-2)    {$J_{2}(K)$};


\node (An)               at (20,-2)    {${\Phi^{\cN}}(K) \ \ \cdots$};
\node (An-1)               at (2,-2)    {$\Phi^{\cN-1}(K)$};
\node (Am)               at (-17,-2)    {${\Phi^{2}}(K) $};

\node(IJ)[draw,rectangle,inner sep=3pt,color=red] at (-78,15) {$\cN^{th}$ unified Jones invariant};

\node(LI)[draw,rectangle,inner sep=3pt,color=blue, minimum height = 2cm] at (-27,18) {\begin{tabular}{l}
 \ \ \ Lagrangian intersection\\    ${\color{blue} \ \text{in} \ \Conf_{2+(n-1)(\cN-1)}\left(\mathbb D\right)}$ \\ $\phantom{!}$ \\ $\color{black} \PA \in \Z[u^{\pm1},x^{\pm1}, d^{\pm1}]$
\end{tabular}};

\node(UJ)[draw,rectangle,inner sep=3pt,color=red] at (-60,35) { \ \ Universal Coloured Jones invariant \ \ };

\node(Q)[draw,rectangle,inner sep=3pt,color=red, minimum width = 6.4cm, 
    minimum height = 1cm] at (-65,-3) {$\phantom{A}$};
\node(Q)[draw,rectangle,inner sep=3pt,color=dgreen, minimum width = 1.5cm, 
    minimum height = 1cm] at (17,-3) {$\phantom{A}$};

\node (J)               at (-47,30)    {$\IJJ \in \LLhJ$};

\draw[->]             (Jl)      to node[right,xshift=2mm,font=\large]{}   (Jn);
\draw[->]             (Jl)      to node[right,xshift=2mm,font=\large]{}   (Jn-1);
\draw[->]             (Jl)      to node[right,xshift=2mm,font=\large]{}   (Jm);

\draw[->]             (J)      to node[right,xshift=2mm,font=\large]{}   (Jl);
\draw[->]             (LI)      to node[right,xshift=2mm,font=\large]{}   (An);



\node(C) at (-70,-10) {\color{red} Coloured Jones polynomials };
\node(C) at (-70,-13) {\color{red} with colors bounded by $\cN$};
\node(C) at (-70,-10) {\color{red} Coloured Jones polynomials };
\node(C) at (5,-10) {\color{dgreen} $\cN^{th}$ Coloured Alexander polynomial};

\draw[->,dashed, red]             (IJ)      to node[right,xshift=2mm,font=\large]{$\underset{\longleftarrow}{\mathrm{lim}}$}   (UJ);

\draw[->, blue]             (LI)      to node[right,xshift=2mm,font=\large]{$ $}   (Jl);

\end{tikzpicture}
\end{center}
\vspace{-4mm}
\caption{Geometrical universal Jones invariant for knots}\label{FigIntro}
\end{figure}
\vspace{-4mm}
{\bf Universal ring} For the second step, we look at the set of coloured Jones polynomials. In this case we identify $u$ and $x$, and consider $\LL=\Z[x^{\pm 1},d^{\pm 1}]$. We define a sequence of nested ideals in $\LL$, given by:
$\tilde{I}_{\cN}^{J}:=\langle \prod_{i=1}^{\cN} (xd^{i-1}-1) \rangle \subseteq \LL$.
 We obtain a sequence of quotient rings, with maps between them and their projective limit $\LLhJ$:
\begin{equation}
\begin{aligned}
&\hspace{13mm} l^J_{\cN} \hspace{11mm} l^J_{\cN+1}\\
&\cdots \LLNJ \longleftarrow \LLNNJ \longleftarrow \cdots \ \ \ \ \ \ \ \ \ \ \ \ \ \ \LLhJ:= \underset{\longleftarrow}{ \mathrm{lim}} \ \LLNJ.
\end{aligned}
\end{equation}
{\bf Unified invariant} In the third step, we define the $\cN^{th}$ Unified Jones invariant $\PAAJ$ as the image of $\PA$ in the quotient ring $\LLNJ$. It is a well-defined knot invariant recovering all coloured Jones polynomials up to level $\cN$ (see Theorem \ref{INV}).

{\bf Universal Jones invariant} Finally, in the fourth step we prove that we can pass to the limit, and define the universal geometrical Jones invariant
\begin{equation}
\begin{aligned}
&\IJJ\in \LLhJ = \underset{\longleftarrow}{\mathrm{lim}} \ \Z[x^{\pm1}, d^{\pm 1}] /\langle \prod_{i=1}^{\cN} (xd^{i-1}-1)\\
&\IJJ:= \underset{\longleftarrow}{\mathrm{lim}} \ \PAAJ.
\end{aligned}
\end{equation}

\subsection{Main Results}

As we have seen, the story for quantum link invariants is different to the knot case. Such coloured versions of link invariants are building blocks for $3-$manifold invariants: the Witten-Reshetikhin-Turaev (\cite{Witt}) and the Costantino-Geer-Patureau invariants (\cite{CGP14}). These are powerful invariants which detect lens spaces (\cite{BCGP}). For knot invariants, topological models appeared in \cite{Big}, \cite{Law},  \cite{Cr3}, \cite{Crsym}, \cite{Cr2}. In \cite{Cr2}, we provided a topological model for coloured Jones polynomials via configurations on {\em figure eights} in the disc. Then, in \cite{WRT} we proved the {\em first topological model} for $3$-manifold invariants, for Witten-Reshetikhin-Turaev's invariant, providing a new framework for the study of these invariants for which a complete $3$-dimensional topological description and categorifications are open questions. As a parallel, there are no topological models known for the Costantino-Geer-Patureau invariants.   Motivated by Habiro's unifications and problems regarding the topology of non-semisimple $3$-manifold invariants we pose four questions.

\begin{itemize}
\item{\bf Question 1} Construct {\em topological models} for coloured Jones and coloured Alexander invariants for {\em coloured links rather than for knots}.\\ Up to this moment no such model was known for the coloured Alexander invariants. 
\item{\bf Question 2} Having in mind Floer type categorifications, can we provide topological models with {\em embedded topological supports}, given by {\em ovals rather than figure eights}? 
\item{\bf Question 3} Is it possible to construct {\em universal invariants for links rather than for knots}?  
\item{\bf Question 4} Are there {\em topological models for CGP invariants}? 
\end{itemize}
 In this paper we answer the first two questions. The answer to {\em Question 2} makes possible the definition of our geometrical universal Jones invariant.
The response to {\em Question 1} gives the first topological model for non-semisimple link invariants. This is the building block for two sequels. In the first sequel article we answer {\em Question 3} providing universal invariants for links. The other paper answers {\em Question 4} on topological models for non-semisimple $3$-manifold invariants.
\vspace*{-2mm}
\begin{figure}[H]
\begin{equation*}
\begin{aligned}
&\hspace{20mm}\LLhJ= \underset{\longleftarrow}{\mathrm{lim}} \ \LLNJ  \\
& \LLNJ= \Z[x^{\pm1}, d^{\pm 1}] /\langle \prod_{i=1}^{\cN} (xd^{i-1}-1) \rangle.
\end{aligned}
\end{equation*}
\begin{center}
\hspace*{-30mm}\begin{tikzpicture}
[x=1.2mm,y=1.1mm]
\node (Jl)               at (-80,10)    {$ \LLNJ \ni \PAAJ$};

\node (Wn)               at (-85,-30)    {$\LLNW$};

\node (Jn)               at (-70,0)    {$J_{\cN}(K)$};
\node (Jn-1)               at (-70,-10)    {$J_{\cN-1}(K)$};
\node (Jm)               at (-70,-20)    {$J_{\cM}(K)$};

\node (A'n)               at (-20,0)    {$\frac{\AN}{\Delta(x^{2\cN})}$};
\node (A'n-1)               at (-20,-10)    {$\frac{\Phi^{\cN-1}(K)}{\Delta(x^{2(\cN-1})}$};
\node (A'm)               at (-20,-20)    {$\frac{\AM}{\Delta(x^{2\cM})}$};

\node(IJ)[draw,rectangle,inner sep=3pt,color=red] at (-85,15) { \ \ $\cN^{th}$ unified Jones invariant \ \ };

\node(IJI)[draw,rectangle,inner sep=3pt,color=red] at (-90,-5) {\begin{tabular}{l} Coloured Jones \\ \ \ polynomials \end{tabular}};

\node(IJI)[draw,rectangle,inner sep=3pt,color=dgreen,text width=3.6cm ] at (3,-5) { \ \ \  \ \ \ \ ADO over \ \ \ \\  Alexander polynomials};

\node(UJ)[draw,rectangle,inner sep=3pt,color=red] at (-55,30) { \ \ Universal Geometrical Jones invariant \ \ };
\node[draw,rectangle,inner sep=3pt,color=blue] at (-52,-35) { \ Habiro universal invariant \cite{H2},\cite{W} \ };

\node (J)               at (-50,25)    {$\IJJ \in \LLhJ$};
\node (W)               at (-50,-30)    {$\IW \in \LLhW$};

\draw[->,dashed]             (Wn)      to node[right,xshift=2mm,font=\large]{}   (Jn);
\draw[->,dashed]             (Wn)      to node[right,xshift=2mm,font=\large]{}   (Jn-1);
\draw[->,dashed]             (Wn)      to node[below right,xshift=-1mm,font=\Large]{\color{blue}$\nexists$}   (Jm);

\draw[->]             (Jl)      to node[right,xshift=2mm,font=\large]{}   (Jn);
\draw[->]             (Jl)      to node[right,xshift=2mm,font=\large]{}   (Jn-1);
\draw[->]             (Jl)      to node[right,xshift=2mm,font=\large]{}   (Jm);

\draw[->]             (J)      to node[right,xshift=2mm,font=\large]{}   (Jn);
\draw[->]             (W)      to node[right,xshift=2mm,font=\large]{}   (Jn);

\draw[->]             (J)      to node[right,xshift=2mm,font=\large]{}   (A'n);
\draw[->]             (W)      to node[right,xshift=2mm,font=\large]{}   (A'n);


\draw[->]             (J)      to node[right,xshift=2mm,font=\large]{$\pi$}   (W);
\draw[->,dashed, red]             (IJ)      to node[right,xshift=-12mm,font=\large]{$\underset{\longleftarrow}{\mathrm{lim}}$}   (UJ);

\end{tikzpicture}
\hspace*{-20mm}\begin{equation}\label{WR}
\begin{aligned}
& \LLhW=\underset{\longleftarrow}{ \mathrm{lim }} \ \LLNW  \\ 
&\hspace{-40mm}\LLNW= \Z[x^{\pm1}, d^{\pm 1}] / \langle \prod_{i=1}^{k-1} (xd^{i-1}-1) \prod_{i=k-1}^{\cN-1} (d^i-1) \mid 1\leq k \leq \cN-1 \rangle.\\
\end{aligned}
\end{equation}
\end{center}
\vspace{-5mm}
\caption{\hspace*{-2mm} The universal Jones invariant globalises sequences of coloured Jones invariants}\label{re}
\end{figure}

\subsection{Recovering Habiro's invariant \cite{H2}, \cite{W}} 
In this part we analyse similarities and differences between our invariant and the Habiro's universal invariant seen in the universal ring from \cite{H2}.
In \cite{H3}, Habiro defined a two-variable power series constructed in his Universal Ring for knots, which unifies all coloured Jones polynomials, and conjectured that at roots of unity this recovers coloured Alexander polynomials over the Alexander polynomials. 
In \cite{W}, Willetts studied the image of Habiro's universal invariant in the ring defined by Habiro in \cite{H2}. More specifically, Willetts used the sequence of ideals and associated quotient rings $\LLNW$, as in relation \eqref{WR}. They give in the limit a universal ring  $\LLhW:= \underset{\longleftarrow}{ \mathrm{lim}} \ \LLNW.$ In this manner, Willetts used Habiro Universal invariant $\IW$ seen in the ring $\LLhW$, and proved that it recovers coloured Jones polynomials through generic specialisations of coefficients and the ADO invariants divided by the Alexander polynomial through specialisations at roots of unity (as stated in Theorem \ref{Will}). However, this invariant is constructed directly in the limit ring: $\IW \in \LLhW$ and it comes from Lawrence's universal construction.

We show that our universal invariant recovers Habiro's invariant (Theorem \ref{RELATION}).
\begin{thm}[\textbf{Projection onto Habiro's Universal invariant}] There is a natural map between the universal rings $\LLhJ$ and $\LLhW$ which sends our universal Jones invariant to Habiro's universal invariant:
$$\pi: \LLhJ \rightarrow \  \LLhW$$
\begin{equation}
\pi(\IJJ)=\IW. 
\end{equation} 
\end{thm}
As a consequence, we obtain the following.
\begin{thm}[\textbf{Geometric unification of coloured Jones and Alexander invariants }\cite{W}] The universal geometrical Jones invariant that we construct recovers coloured Jones polynomials and the ADO invariants divided by the Alexander polynomial:
\begin{equation}
\begin{aligned}
&\IJJ \Bigm| _{d=\xi_{\cN}^{-1}}  =~ \frac{\Phi^{\cN}(K,x)}{\Delta(K,x^{2\cN})}\\
&\IJJ \Bigm| _{x=d^{\cN-1}}  =~ J_{\cN}(K). 
\end{aligned}
\end{equation} 
\end{thm}

\subsubsection{Differences: the geometrical Jones invariant and Habiro's invariant} In this part, we investigate the components of the two universal invariants that we have: 
$$\IJJ:= \underset{\longleftarrow}{\mathrm{lim}} \ \PAAJ \in \LLhJ \ \ \ \longrightarrow \ \ \ \IW:= \underset{\longleftarrow}{\mathrm{lim}} \ \PAAW \in \LLhW.$$
Structurally, these two universal invariants come from totally different perspectives. The geometric universal Jones invariant is a limit of invariants that see more and more coloured Jones polynomials:
\begin{equation}
\PAAJ \Bigm| _{\color{red}x=d^{\cM-1}}  =~ {\color{red} J_{\cM}(K)}, \ \  \forall \cM\leq \cN. 
\end{equation}
 
On the other hand, the structure of the Habiro invariant is different, being naturally constructed in the limit ring. In Section \ref{S:rel}, we discuss the structure of the ring used by Willetts and we investigate its components. Since the ideals that he considered are much larger than ours, its intermediary rings are smaller. Because of this, the specialisations at natural numbers or roots of unity are impossible from the $\cN^{th}$ component of Habiro's invariant $\PAAW$:
\begin{equation}
\begin{aligned}
&\PAAW \Bigm| _{\color{blue}{x=d^{\cM-1}}} \text{not well-defined}, \\ 
&\PAAW \Bigm| _{\color{blue}{d=\xi_{\cM}^{-1}}} \text{     not well-defined}, \ \ \ \ \ \forall \cM\leq \cN.
\end{aligned}
\end{equation}

\begin{rmk}[Unifying intermediary levels of the coloured Jones sequence]
So the question of having a knot invariant at the intermediary levels used by Willetts that sees either coloured Jones invariants or coloured Alexander invariants is impossible even to state. On the other hand, in our ring, which is larger, we are able to globalise more and more coloured Jones polynomials and in the limit we recover also the ADO invariant over the Alexander polynomial, as summarised in Figure \ref{re}.

\end{rmk}

\subsection{Future directions}
\subsubsection{Geometry of the universal Jones invariant via infinite configuration~spaces} \label{asym}\phantom{A}\\
As we have seen, the universal Jones invariant $\IJJ$ is the limit of the $\cN^{th}$ unified Jones invariants $\{  \PAAJ \in \LLNJ \}$.
For a fixed colour $\cN$, $\PAAJ$ is given by graded intersections in the configuration space of $(n-1)(\cN-1)+2$ points in the disc :
$$\PAAJ \longleftrightarrow \lbrace (\beta_{n} \cup {\mathbb I}_{n+2} ) \ { \color{red} \FA} \cap {\color{dgreen} \LA} \rbrace, \text{ for } {\bar{i} \in \{\bar{0},\dots,\overline{\cN-1}\}}.$$

\vspace{-8mm}

\begin{figure}[H]
\begin{equation}
\centering
\begin{split}
\begin{tikzpicture}
[x=1mm,y=1.6mm]
\node (tl) at (50,68) {$\LLhJ$};
\node (ml) at (50,35) {$\LLNNJ$};
\node (mr) at (80,35) {$\LNNJ$};
\node (bl) at (50,-10) {$\LLNJ$};
\node (br) at (80,-10) {$\LNJ$};
\node (mlei) at (ml.north east) [anchor=south west,color=black] {\phantom{A}\vspace*{20mm}\hspace{-110mm}${\bar{i} \in \{\bar{0},\dots,\overline{\cN}\}}: \lbrace (\beta_{n} \cup {\mathbb I}_{n+2} ) \ { \color{red} \FAnn} \cap {\color{dgreen} \LAnn} \rbrace$};
\node (blei) at (bl.north east) [anchor=south west,color=black] {\phantom{A}\vspace*{20mm}\hspace{-105mm}${\bar{i} \in \{\bar{0},\dots,\overline{\cN-1}\}}: \lbrace (\beta_{n} \cup {\mathbb I}_{n+2} ) \ { \color{red} \FA} \cap {\color{dgreen} \LA} \rbrace$};
\node (tle) at (tl.south) [anchor=north,color=red] {$\IJJ$};
\node (mle) at (ml.north) [anchor=south,color=red] {$\PAANJ$};
\node (ble) at (bl.north) [anchor=south,color=red] {$\PAAJ$};
\node (mre) at (mr.north west) [anchor=south west,color=blue] {$J_{\cN +1}(K)$};
\node (bre) at (br.north west) [anchor=south west,color=blue] {$J_{\cN}(K)$};
\draw[->] (mle) to node[below,font=\small]{} (mre);
\draw[->,dashed] (ble) to node[below,font=\small,yshift=7mm]{{\hspace{-50mm}\color{dgreen}Add one particle on each circle}} (ml);
\draw[->,dashed] (ble) to node[below,font=\small,yshift=-26mm]{{\hspace{-50mm}\color{carmine}Add arcs to the right punctures}} (ml);
\draw[->,dashed] (ble) to node[below,font=\small,yshift=17mm]{\hspace{40mm}Level $\cN+1$} (ml);
\draw[->,dashed] (ble) to node[below,font=\small,yshift=-16mm]{\hspace{40mm}Level $\cN$} (ml);
\draw[->] (ble) to node[below,font=\small]{} (bre);
\draw[->] (tle) to node[above,yshift=5mm,font=\small]{$\usnnJ$} (mre);
\node[draw,rectangle,inner sep=3pt] at (-5,69) {Limit};
\node[draw,rectangle,inner sep=3pt,color=red] at (-5,65) {Universal Jones invariant};
\end{tikzpicture}
\end{split}
\end{equation}

\end{figure}
\vspace{-130mm}

\begin{figure}[H]
\centering

{\phantom{A} \ \ \ \ \ \ \ \ \ \ \ \ \ \ \ \ \ \ \ \ \ \ \ \ \ \ \ \ \ \ \ \  \ \ \ \ \ \ \ \ \ } \hspace*{-140mm}\includegraphics[scale=0.35]{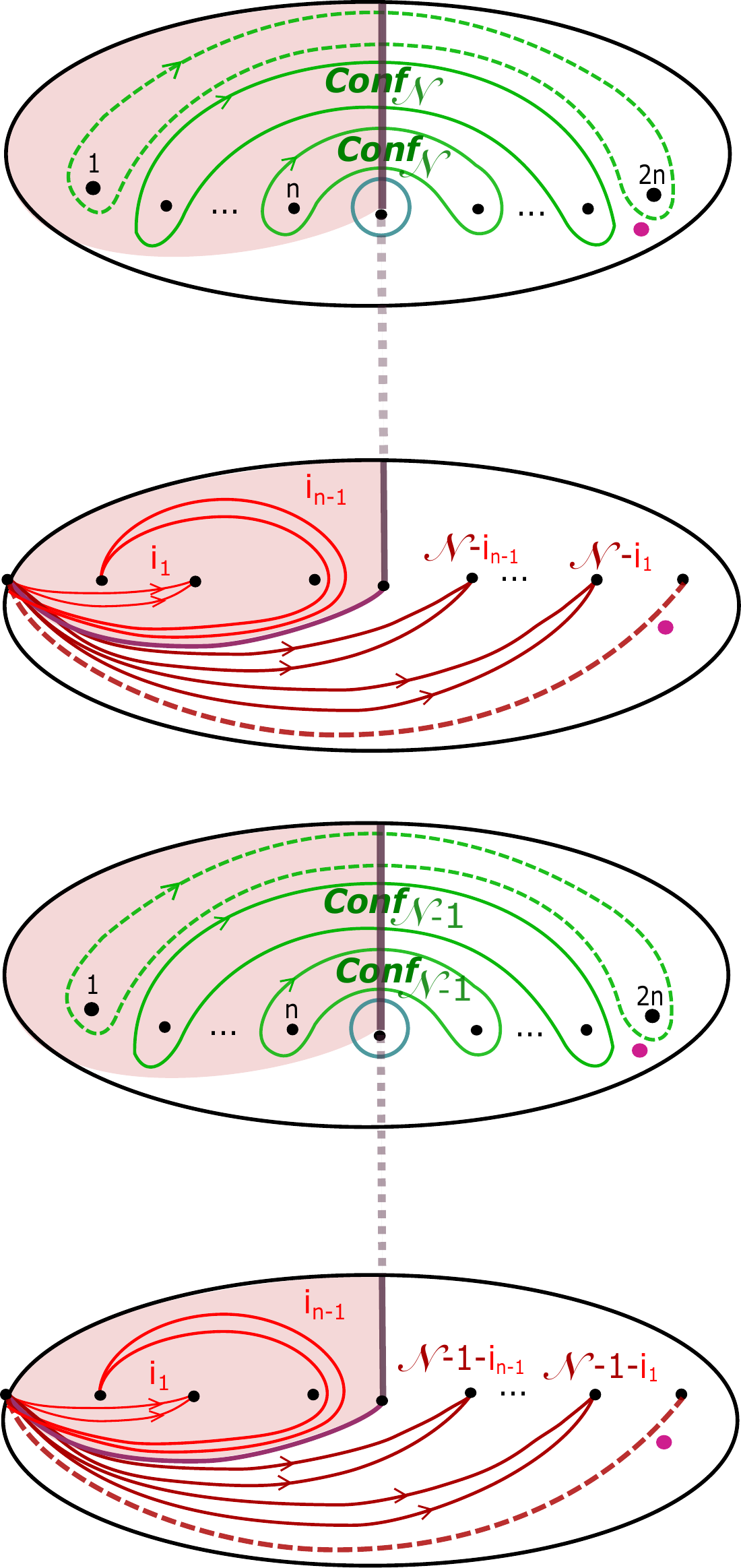}
\vspace{-1mm}
\caption{\normalsize Universal Jones invariant as the limit of intersections in configurations}
\label{ColouredAlex}
\end{figure}
\vspace{-3mm}

Passing from level $\cN$ to level $\cN+1$, we have an explicitly understandable  behaviour of the homology classes, as shown in Figure \ref{ColouredAlex}. 
The above argument suggests that the behaviour of the knot invariants $\PAA$ with respect to the change of level has a stability phenomenon. 
Based on this we are interested in defining the universal Jones invariant $\IJJ$ directly as an intersection in an infinite configuration space. 
\clearpage
\vspace{-3mm}
\subsubsection{Sequel paper I: universal link invariants} 

This article is paired with a sequel \cite{CrI}. 

{\bf \em Open problem:}
Can one construct a unification of coloured Alexander invariants (which are non semi-simple) for coloured links? Our second paper provides an answer to this open problem.\\
 The parallel question about semi-simple invariants for knots is the subject of Habiro’s famous universal knot invariant, for which we presented a geometric perspective in the current paper.\\
 In \cite{CrI} we construct geometrically two {\bf universal link invariants}, as limits of invariants defined in configuration spaces. 
 More specifically, we define a universal ADO link invariant as a limit of invariants given by graded intersections in configuration spaces. Moreover, we construct a universal Jones invariant for links, which is a limit of invariants globalising all coloured Jones link invariants at bounded levels. These models uses in an essential manner the geometrical set-up from the current paper: the topological model for coloured Alexander link invariants from Theorem \ref{THEOREMA}. However, it contains new geometric data which permits globalisations for the link case. 
  \vspace{-4mm}
\subsubsection{Sequel paper II: Topological models for $WRT$ and $CGP$ invariants} 
An active research area studies categorifications of link invariants (\cite{K,Ras,OZ,SM,M1}). On the other hand, the existence of categorifications for $WRT$ and $CGP$ invariants are open questions.
For this purpose, a topological model with embedded Lagrangians would bring a new approach to these problems. 
In a different direction, passing from knot to $3$-manifold invariants, Habiro \cite{H3} showed a beautiful unification of all $WRT$ invariants.

{\bf \em Problem-} Is it possible to unify the $CGP$ invariants, as a parallel to Habiro's celebrated unification of the WRT invariants (\cite{H3})? As a step towards this, we shall prove the following models.

Using the constructions from this paper for coloured Jones and Alexander invariants for links, we pass to $3$-manifolds. Our sequel result describes both $WRT$ and $CGP$ invariants from a set of Lagrangian submanifolds in a fixed configuration space. The geometric support of these Lagrangians will be based on ovals, which is important for the geometric study of these invariants. 
\vspace{-8mm}
\subsection*{Structure of the paper} This paper has seven parts. In Section \ref{S3} we define homological tools: local systems and homology groups. Then, in Section \ref{S:4} we show that for good choices of covering spaces, we can lift submanifolds supported on ovals. Section \ref{S:J} has the topological model for coloured Jones polynomials. Section \ref{S:A} is dedicated to the topological model for coloured Alexander polynomials for links. Then, in Section \ref{SAu} we show the globalisation property of  the models for coloured Jones polynomials at bounded level. Based on this, we define our unified Jones invariants in Section \ref{S:UN}. In Section \ref{univ} we construct the universal Jones invariant $\IJJ$, a knot invariant recovering all coloured Jones polynomials. In Section \ref{S:rel} we show that our invariant recovers Habiro's invariant by a change of coefficients and discuss the relation between these two constructions.
\vspace{-4mm} 
\subsection*{Acknowledgements} 
 I acknowledge the support of SwissMAP, a National Centre of Competence in Research funded by the Swiss National Science Foundation as well as the Romanian Ministry of Education and Research, CNCS-UEFISCDI, project number PN-III-P4-ID-PCE-2020-2798. I also acknowledge the support of the ANR grant ANR-24-CPJ1-0026-01 at Universit\'e Clermont Auvergne - LMBP and partial support by grants of the Ministry of Research, Innovation and Digitization, CNCS - UEFISCDI, project numbers  PN-IV-P2-2.1-TE-2023-2040 and PN-IV-P1-PCE-2023-2001, within PNCDI IV. I would like give special thanks to Christian Blanchet, Jun Murakami and Emmanuel Wagner for insightful discussions. Also, I thank Kazuo Habiro, Rinat Kashaev, Christine Lee and  Martin Palmer for useful conversations. 
 \vspace{-4mm} 

{\tableofcontents \vspace{-2mm}}

\section{Summary of the two topological models} In this section we will describe the construction of the geometrical set-up for our models as well as the main statements, and we will present the proofs in the later sections. 
\subsection{Geometrical set-up} Suppose that $L$ link, and let $\beta_n \in B_n$ be a braid such that $L= \widehat{\beta_n}$ by braid closure. Let $\mathbb D_{2n+2}$ be the $(2n+2)$-punctured disc where we split the set of punctures:
\begin{itemize}
\item[•]$2n$ punctures placed horizontally, called $p$-punctures (denoted by $\{1,..,2n\}$)
\item[•]$1$ puncture called $q$-puncture (labeled by $\{0\}$)
\item[•]$1$ puncture placed as in Figure \ref{ColouredAlex} (below the last $p$-puncture) called $s$-puncture.
\end{itemize}
Now, for $m\in \N$, let $C_{n,m}:=\Conf_{m}(\mathbb D_{2n+2})$ be the unordered configuration space of $m$ points in the punctured disc. A subtle part of our construction is the definition of a suitable covering space.
Suppose that we fix $\bar{\cN}=(\cN_1,\cN_2,...,\cN_n)$, called ``(multi-)level'', with $\cN_1+\cN_2+...+\cN_n=m-1$. 

{\bf Difficulty} Let $L(\bar{\cN}) \subseteq \Conf_{m}(\mathbb D_{2n+2})$ be the submanifold given by {\em configurations on symmetric ovals} with multiplicities prescribed by $\bar{\cN}$, as in Figure \ref{Pictureci}. We want to construct a covering of $C_{n,m}$ where $L(\bar{\cN})$ has a well-defined lift. In the literature there are two well known local systems given by the Lawrence representation \cite{Law} obtained by evaluating symmetric punctures by the same or opposite variables. Our  $L(\bar{\cN})$ does {\bf not} have a lift in the two  associated {\bf Lawrence covering spaces}. 
\begin{defn}[{\bf {Level $\bar{\cN}$} covering space}]
In order to be able to have {\em well-defined lifts} of $L(\bar{\cN})$, we construct a well-chosen covering that depends on the level $\bar{\cN}$. We do so by defining a finer local system $\Phi^{\bar{\cN}}$ on $\Conf_{m}(\mathbb D_{2n+2})$ (Definition \ref{localsystem}), associated to the {\em level $\bar{\cN}$}. This evaluates monodromies around symmetric punctures $i$ and $2n-i$ by different variables, depending on the multiplicity on the $i^{th}$ oval: $\cN_i$. Next, we use the homologies of the associated covering space: $H^{\bar{\cN}}_{n,m}$ and $H^{\bar{\cN},\partial}_{n,m}$ (they are versions of Borel-Moore homology introduced in \cite{CrM}, see Diagram \ref{diags}). 

\end{defn}
\begin{defn}[{Intersection pairing}] There is a Poincaré-type duality relating these homologies: 
$\left\langle ~,~ \right\rangle:  H^{\bar{\cN}}_{n,m} \otimes H^{\bar{\cN},\partial}_{n,m} \rightarrow \C[x_1^{\pm 1},...,x_l^{\pm 1},y^{\pm 1}, d^{\pm 1}] \ \  (\text{Proposition } \ref{P:3'})$ and moreover there is a braid group action on $H^{\bar{\cN}}_{n,m}$ (as in Proposition \ref{colbr}).
\end{defn}
\begin{rmk}[{\em \bf New framework: Topological models from configurations on ovals}] The submanifold $L(\bar{\cN})$ has a well-defined lift in the level $\bar{\cN}$ covering, and gives a well defined homology class.  This lifting result is definitely false if we take the local system that naturally arises from the Lawrence representation, and do not quotient further using the level $\bar{\cN}$.
This is the reason that prevented us from having a model with configurations on ovals for coloured Jones polynomials and coloured Alexander polynomials in \cite{Cr2} and for WRT invariants in \cite{WRT}. 
\end{rmk}

\subsection{First Topological model for coloured Alexander polynomials for links}\label{ssA} 
As we have seen, for a fixed level $\cN \in \N$ ($\cN\geq 2$), the quantum group at the root of unity $\xi_N=e^{\frac{2 \pi i}{2\cN}}$ gives non-semisimple quantum link invariants. They associate to a link $L$ coloured with generic colours $\lambda_1,...,\lambda_l\in \C$ an invariant $\Phi^{\cN}_{\bar{\lambda}}(L)\in \C$, called coloured Alexander polynomial \cite{ADO} (here $\bar{\lambda}=(\lambda_1,...,\lambda_l)$).
Let us fix such a level $\cN$ and $L$ an oriented framed link with framings $f_1,...,f_l\in \Z$ and $\beta_n \in B_n$ such that $L= \widehat{\beta_n}$. For the coloured Alexander polynomial $\Phi^{\cN}_{\la}$ we use the above homological set-up associated to the data:
\begin{itemize}
\item Number of particles $m_A(\cN):=2+(n-1)(\cN-1)$, Space $C_{n,m_A(\cN)}:=\Conf_{2+(n-1)(\cN-1)}\left(\mathbb D_{2n+2}\right)$
\item Multi-level: $\bar{\cN}:=(\cN_1,...,\cN_n)=(1,\cN-1,...,\cN-1)$; \ \ Local system: $\Phi^{\bar{\cN}}$
\item Specialisation of coefficients: $\sAt$ (Notation \ref{pA1}, Definition \ref{pA2}).
\item Indexing set: $\{\bar{0},\dots,\overline{\cN-1}\}= \big\{ \bar{i}=(i_1,...,i_{n-1})\in \N^{n-1} \mid 0\leq i_k \leq \cN-1, \  \forall 1 \leq k \leq n-1 \big\}.$
\end{itemize}
\begin{defn}(Coloured Homology classes) We use the homology classes of lifts of Lagrangian submanifolds from the base space. They are prescribed by collection of arcs/ ovals in the disc as in Notation \ref{paths}.
With this recipe, for  $\bar{i}\in \{\bar{0},\dots,\overline{\cN-1}\}$ we define two classes:
\vspace{-7mm}
\begin{figure}[H]
\centering
$${\color{red} \mathscr F_{\bar{i},\cN} \in \HAi} \ \ \ \ \ \ \ \ \text{ and }\ \ \ \ \ \ \ \ \  {\color{dgreen} \mathscr L_{\bar{i},\cN}\in \HAdi}.$$
\vspace{-3mm}

\includegraphics[scale=0.33]{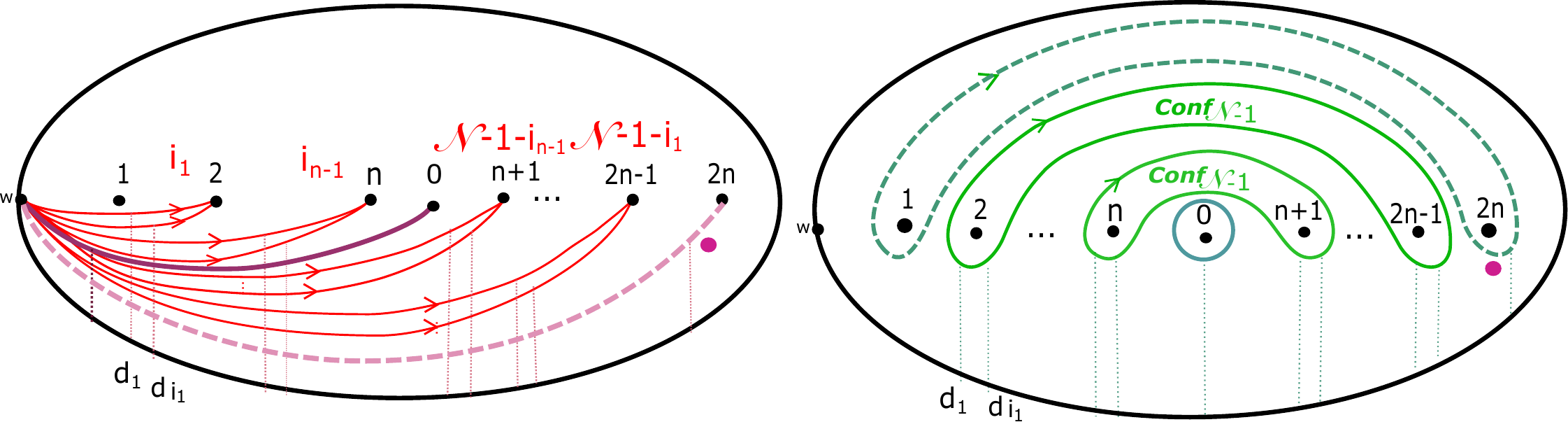}
\caption{\normalsize Coloured Alexander Homology Classes}
\label{ColouredAlex0}
\end{figure}
\end{defn}
\vspace{-6mm}
We look at the link as being the closure of a braid with $n$ strands. This induces a colouring $C$ of $2n$ points with $l$ colours, which we denote by:
\begin{equation}\label{eq:col}
C:\{1,...,2n\}\rightarrow \{1,...,l\}.
\end{equation}
\begin{defn}[{\bf \em Lagrangian intersection for coloured Alexander polynomials}] We define the following Lagrangian intersection in ${\color{blue} \Conf_{2+(n-1)(\cN-1)}\left(\mathbb D_{2n+2}\right)}$:
\begin{equation} 
\begin{aligned}
& \PA \in \LL=\Z[u_1^{\pm 1},...,u_l^{\pm 1},x_1^{\pm 1},...,x_l^{\pm 1},y^{\pm 1}, d^{\pm 1}]\\
& \PA:=\prod_{i=1}^l u_{i}^{ \left(f_i-\sum_{j \neq {i}} lk_{i,j} \right)} \cdot \prod_{i=2}^{n} u^{-1}_{C(i)} \cdot  \sum_{\bar{i}=\bar{0}}^{\overline{\cN -1}}  \left\langle(\beta_{n} \cup {\mathbb I}_{n+2} ) \ { \color{red} \FA}, {\color{dgreen} \LA}\right\rangle. 
   \end{aligned}
\end{equation} 
\end{defn}

\begin{thm}[{\bf \em Topological model for coloured Alexander polynomials of coloured links}]\label{THEOREMA}
The graded intersection $\PA$ recovers the $\cN^{th}$ coloured Alexander polynomial of $L$ as below:
\begin{equation}
\begin{aligned}
&\Phi^{\cN}_{\la}(L) =~ \PA \Bigm| _{\sAt}  \ \ \  (\sAt \text{ is a specialisation of variables}).
\end{aligned}
\end{equation} 
\end{thm}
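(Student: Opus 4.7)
The plan is to follow the Kohno/Bigelow/Lawrence dictionary adapted to the non-semisimple setting: identify the homology of the level-$\bar{\cN}$ covering, after the specialisation $\sAt$, with the relevant weight subspace of a tensor product of Verma-type $U_{\xi_\cN}(sl(2))$-modules; match the braid action on homology with the quantum $R$-matrix action; identify the classes $\FA$ with a standard weight basis and the dual classes $\LA$ with its dual; and then show that the graded intersection sum implements the partial quantum trace that defines the coloured Alexander invariant.

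First I would unpack the specialisation $\sAt$. With multi-level $\bar{\cN}=(1,\cN-1,\dots,\cN-1)$ the $k$-th oval carries $\cN_k$ particles and is therefore responsible for the weight of the $k$-th strand, and $\sAt$ sends the monodromy variables $u_j$ to roots of unity determined by the colours $\lambda_1,\dots,\lambda_l$ through the strand colouring $(C_1,\dots,C_n)$, while $y$ and $d$ go to specific powers of $\xi_\cN$. Using Proposition \ref{P:3'} and the change-of-coefficients diagram, I would identify the subspace spanned by $\{\FA\}_{\bar i\in C(\cN)}$ with the relevant weight subspace of $V_{\lambda_{C_1}}\otimes\cdots\otimes V_{\lambda_{C_n}}$, so that $\{\LA\}$ becomes its dual basis up to a monomial arising from the diagonal terms of the Poincar\'e pairing.

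Next I would verify the identification of classes and of the braid action. The class $\FA$ is built from $i_k$ configurations on the $k$-th oval encircling the symmetric pair $(k,2n-k)$, with the lift tracked by the level-$\bar{\cN}$ local system; after $\sAt$ this maps to the weight multivector $v_{\lambda_{C_1}}\otimes F^{i_1}v_{\lambda_{C_2}}\otimes\cdots\otimes F^{i_{n-1}}v_{\lambda_{C_n}}$. The braid $\beta_n\cup\mathbb{I}_{n+1}$ acts on the $p$-punctures, and its induced action on $\{\FA\}$ is the homological Lawrence-type action; the crucial step is to match this with the $R$-matrix action on the weight subspace via an extension of Kohno's theorem to the intermediate level-$\bar{\cN}$ covering. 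This is the step I expect to be the main obstacle, because the standard Kohno arguments are formulated over the full Lawrence covering: one must show that enough monodromy is preserved by the quotient corresponding to $\bar{\cN}$ for the $R$-matrix action to be faithfully encoded, while still making the lifts of configurations on ovals (the content of the lifting lemma) well-defined. The two additional punctures $q$ and $s$ play a central role here --- the $q$-puncture organises the weight grading of the Verma modules, and the $s$-puncture implements, via its monodromy, the modified trace structure.

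Finally I would assemble the pieces. Given the previous two paragraphs, the pairing $\left\langle(\beta_n\cup\mathbb{I}_{n+1})\FA,\LA\right\rangle$ extracts, after $\sAt$, the diagonal matrix coefficient of the quantum braid action on the weight vector indexed by $\bar i$; summing over $\bar i\in C(\cN)$ yields the partial quantum trace over the closed strands of $\widehat{\beta_n}$, which is exactly the object that defines $\Phi^{\cN}_{\bar\lambda}(L)$ in the Geer--Patureau--Turaev renormalised Reshetikhin--Turaev formalism. The prefactor $\prod_{i=1}^l u_i^{f_i-\sum_{j\neq i}lk_{i,j}}$ specialises to the framing anomaly correction $\xi_\cN^{-\lambda_i(\lambda_i+2)\cdot(\dots)}$ needed to pass from a braid invariant to a framed link invariant with the prescribed framings, while $\prod_{i=2}^{n}u^{-1}_{C(i)}$ specialises to the inverse of the modified dimensions of the closed strands, cancelling the spurious dimension factors picked up by the closure and leaving the single factor $d(\lambda_{C_1})$ that appears in the modified trace. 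Combining these contributions with the partial trace gives $\Phi^\cN_{\bar\lambda}(L)$, completing the proof.
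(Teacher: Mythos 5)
Your overall strategy coincides with the paper's: both arguments reduce to the Reshetikhin--Turaev definition of $\Phi^{\cN}_{\bar{\lambda}}$ by identifying the classes $\FA$ with a weight basis of a tensor product of truncated Verma modules, matching the homological braid action with the $R$-matrix action, and reading the sum over $\bar{i}\in C(\cN)$ as a partial quantum trace. Two points, however, constitute genuine gaps.

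First, the step you yourself flag as ``the main obstacle'' --- that the $R$-matrix action is faithfully encoded by the braid action on the intermediate level-$\bar{\cN}$ covering --- is left unresolved, and it is the crux. The paper resolves it by invoking Martel's identification of the homological and quantum braid representations, combined with a representation-theoretic input you do not supply: the extension to distinct colours $\lambda_1,\dots,\lambda_l$ of the fact that the tensor product of the $\cN$-dimensional truncations of Verma modules of different highest weights is preserved by the braiding once specialised at $\xi_{\cN}$ (the multi-colour analogue of Lemma 3.1.7 of \cite{Cr3}, resting on the vanishing of quantum factorials at roots of unity). Without this closure property the state sum over $C(\cN)$ is not even well posed on the algebraic side.

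Second, you misattribute the role of the prefactor $\prod_{i=2}^{n} u^{-1}_{C(i)}$. Under $\sAt$ it becomes $\prod_{k=2}^{n}\xi_{\cN}^{(\cN-1)\lambda_{C(k)}}$, which together with the monomial $d^{-4\sum_k i_k}$ extracted from the intersection reproduces the action of the pivotal element $K^{\cN-1}$ in the quantum trace; it is not ``the inverse of the modified dimensions of the closed strands''. The modified dimension $d(\lambda_{C(1)})$ enters exclusively through the variable $y$, i.e.\ the monodromy around the $s$-puncture (your description of the $s$-puncture is correct, but the $u$-factors cannot be asked to do the same job twice). Likewise, the framing prefactor specialises to a factor linear in $\lambda_i$, namely $\xi_{\cN}^{(1-\cN)\lambda_i(f_i-\sum_{j\neq i}lk_{i,j})}$, not to a quadratic twist of the form $\xi_{\cN}^{-\lambda_i(\lambda_i+2)(\cdots)}$.
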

This is the {\bf \em first topological model for these non-semisimple link invariants} coloured with different colours. For uni-coloured links we constructed topological models in \cite{Cr3,Cr2}. From the representation theory perspective, the change from the {\bf \em single to multi-colour case} is subtle due to the fact that the non-semisimple quantum invariants for coloured links strongly rely on the notion of modified traces and modified quantum dimensions.
 
\vspace{-4mm}
\subsubsection{Encoding modified quantum dimensions via topological tools}\label{ssmodif}


We  encode the modified dimensions using punctures and local systems on configuration spaces. Our idea is to add an additional puncture, the {\bf \em $s$-puncture} which {\bf \em  encodes modified quantum dimensions}. We do this via the local system,  by recording the monodromy around this puncture by a variable $y$. Then, we define our graded intersection $\PA$ which also has a contribution in $y$. At the very end, we specialise $y$ to the modified dimension through the function $\sAt$.
\begin{rmk}({\bf \em Role of the variables}) The pairing $\PA$ has $4$ types of variables: $x_i, y, d, u_i$. The first three variables come from the geometry of the local system, while the last one, $u_i$, encodes algebraic data provided by the pivotal structure coming from the quantum group.
\begin{enumerate}
\item[•][{\color{blue} \bf \em Winding around the link}] Variables {\color{blue}\bf \em $x_i$} encode {\color{blue}linking numbers with the link}, as winding numbers around the {\color{blue}$p$-punctures}
\item[•][{\color{dgreen} \bf \em Relative twisting}] The power of {\color{dgreen}$d$} records a relative twisting of the submanifolds, given by the winding around the {\color{dgreen}diagonal of the symmetric power}.
\item[•][{\color{carmine} \bf \em Modified dimension}] The power of {\color{carmine}$y$} of records the winding number around the {\color{carmine} $s$-puncture}
\item[•][{\color{teal} \bf \em Pivotal structure}] {\color{teal} \bf \em $u_i$} captures the {\color{teal} semisimplicity/ non-semisimplicity} of the invariant, which is given by a power of ${\color{blue} x_i}$, so a {\color{teal} power of the linking number with the link}.
\end{enumerate}
\end{rmk}

\subsection{Topological model- coloured Jones polynomials}
As we have discussed, coloured Jones polynomials come from the quantum group $U_q(sl(2))$ and to each $l$-component link $L$ and a choice of colours $N_1,...,N_l \in \N\setminus \{0,1\}$ they associate a polynomial $J_{N_1,...,N_l}(L,q)$. Let $N_1,...,N_l\in \N$ be a set of colours for our link and $\bar{N}:=(N_1,...,N_l).$ Using the colouring $C$ defined above, we denote $\ccN_i:=N_{C(i)}.$ 
{\bf \em Parameters for $J_{N_1,...,N_l}$} This time, we use the homological set-up associated to the data:
\begin{itemize}
\item Number of particles: $m_J(\bar{N}):=2+\sum_{i=2}^{n} N^C_i$
\item Configuration space: $C_{n,m_J(\bar{N})}:=\Conf_{2+\sum_{i=2}^{n} N^C_i}\left(\mathbb D_{2n+2}\right)$ 
\item Multi-level: $\bar{\cN}(\bar{N}):=(1,\ccN_2-1,...,\ccN_n-1)$, Local system: $\bar{\Phi}^{\bar{\cN}(\bar{N})}$ ({\bf \em  depend on the colours})
\item Specialisation of coefficients: $\sJt$ (Notation \ref{pJ1}, Definition \ref{pJ2}).
\end{itemize}
We remark that the number of particles $m_J(\bar{N})$, the multi-level $\bar{\cN}(\bar{N})$ and local system $\bar{\Phi}^{\bar{\cN}(\bar{N})}$ depend on $\bar{N}$. So, the good covering in this case depends on the colouring. Now, let us define:
\begin{equation}\label{stJi}
C(\bar{N}):= \big\{ \bar{i}=(i_1,...,i_{n-1})\in \N^{n-1} \mid 0\leq i_k \leq \ccN_{k+1}-1, \  \forall k\in \{1,...,n-1\} \big\}.
\end{equation}
For $\bar{i}\in C(\bar{N})$ consider the two homology classes given by the geometric supports from Figure \ref{Picture0i}:
\vspace*{-10mm}
 \begin{figure}[H]
\centering
$${\color{red} \FJ \in \HJi} \ \ \ \ \ \ \ \ \ \ \text{ and } \ \ \ \ \ \ \ \ \ \ \ \ \ {\color{dgreen} \LJ \in \HJdi} .$$
\vspace{-5mm}
\includegraphics[scale=0.4]{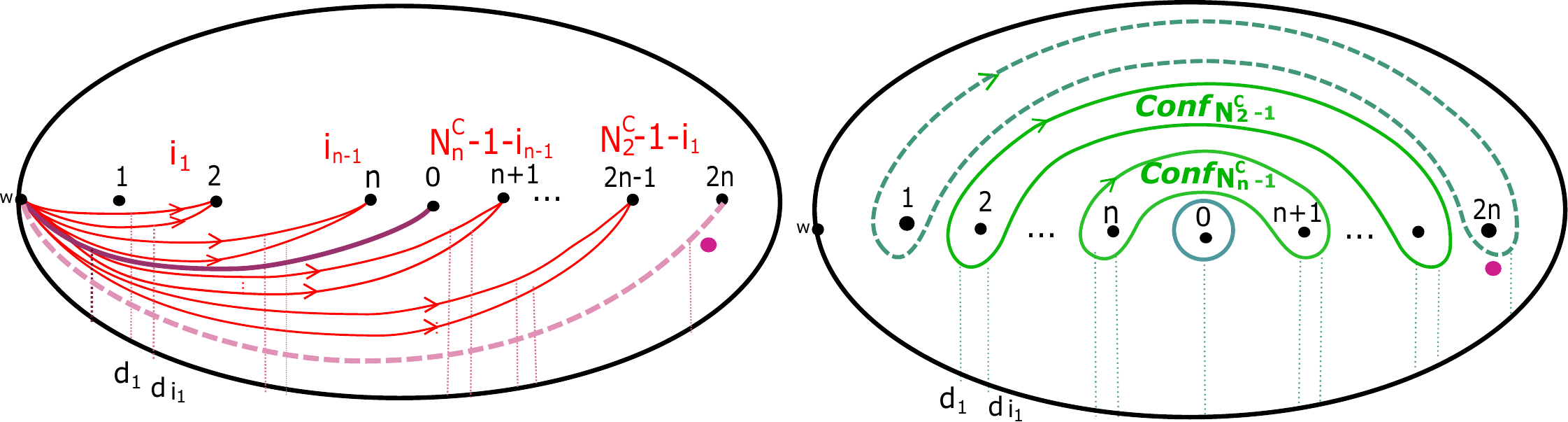}
\caption{ \normalsize Lagrangians for Coloured Jones link invariants coloured with diferent colours}
\label{Picture0i}
\end{figure}
\vspace*{-3mm}
\begin{thm}[{\bf \em Topological model for coloured Jones polynomials for coloured links}]\label{THEOREMJ}
Let us consider the Lagrangian intersection $\PJ \in \LL$, given by:
\begin{equation}
\PJ:=\prod_{i=1}^l u_{i}^{ \left(f_i-\sum_{j \neq {i}} lk_{i,j} \right)} \cdot \prod_{i=2}^{n} x^{-1}_{C(i)} \cdot \sum_{\bar{i}\in C(\bar{N})} \left\langle(\beta_{n} \cup {\mathbb I}_{n+2} ) \ { \color{red} \FJ}, {\color{dgreen} \LJ}\right\rangle. 
\end{equation} 
Then $\PJ$ recovers the coloured Jones polynomial of $L$ through specialisations:
\begin{equation}
 J_{N_1,...,N_l}(L,q)= \PJ \Bigm| _{\sJt}.
\end{equation} 
\end{thm}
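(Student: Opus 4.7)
\textbf{Proof Plan for Theorem \ref{THEOREMJ}.} The strategy is to reduce the geometric intersection $\PJ$ to the representation-theoretic formula for the coloured Jones polynomial of $\widehat{\beta_n}$, by identifying the homology of the level $\bar{\cN}(\bar{N})$ covering, after specialisation via $\sJt$, with a tensor product of Verma modules for $U_q(sl(2))$. The core idea, parallel to the proof of Theorem \ref{THEOREMA}, is that the specialisation $\sJt$ is designed to convert the variables $x_i, y, d$ of the local system into the quantum parameter $q$ and the highest weights $q^{N_i}$ in just the right way so that the Lawrence-type braid action on $H^{\bar{\cN}(\bar{N})}_{n,m_J(\bar{N})}$ becomes the $R$-matrix action on $V_{N_1}\otimes \cdots \otimes V_{N_n^C}$.

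First, I would identify the multi-arc classes $\FJ$ with the standard weight vectors in the tensor product of Verma modules (the family $\{F^{i_1}v_0 \otimes \cdots \otimes F^{i_{n-1}} v_0\}$ under the specialisation $\sJt$), and the multi-oval classes $\LJ$ with their dual basis under the Poincaré--Lefschetz pairing introduced in Proposition \ref{P:3'}. This identification uses the lifting Lemma for configurations on ovals, which requires precisely the intermediate level $\bar{\cN}(\bar{N})$ and \textit{cannot} be performed in the full Lawrence covering---this is the reason an intermediate covering must be used. Next, the braid $(\beta_n \cup \mathbb I_{n+1})$, acting on the first factor by the closure placement, would be shown to act on these classes via the standard $R$-matrix braiding, so that
\[
\sum_{\bar{i}\in C(\bar{N})} \left\langle(\beta_{n} \cup {\mathbb I}_{n+1} ) \ \FJ, \LJ\right\rangle
\]
computes, after specialisation, the partial trace of the braid operator on $V_{N_2}\otimes\cdots \otimes V_{N_n^C}$ with the first strand open---precisely the Reshetikhin--Turaev formula for $\widehat{\beta_n}$ before normalisation.

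The remaining prefactor $\prod_{i=1}^{l} u_i^{f_i - \sum_{j\neq i} lk_{i,j}} \cdot \prod_{i=2}^n x^{-1}_{C(i)}$ should then exactly account for the framing correction and the pivotal/partial-trace normalisation of the coloured Jones polynomial: the $u_i$ factors encode the contribution of the framings (via writhe) and of the encircling strand converting the open braid trace into the Markov trace, while the $x^{-1}_{C(i)}$ factors cancel the weight shift arising from the fact that the rightmost column of the geometric supports in Figure \ref{Picture0i} is not ``plugged'' by a cup/cap in the usual RT sense. Concretely, I would compute these factors by comparing the action of $K^{\pm 1}$ on the ground state with what the local system records as monodromy around the encircling punctures.

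The main obstacle I anticipate is the careful bookkeeping in the multi-coloured case: when strands carrying different colours $N_i$ cross, the $R$-matrix involves terms mixing different weight spaces, and one must verify that the braid action on the level $\bar{\cN}(\bar{N})$ homology truly realises this mixing (as opposed to merely an action that agrees after specialisation for monochromatic links). This is precisely where the asymmetric local system $\bar{\Phi}^{\bar{\cN}(\bar{N})}$, which evaluates symmetric punctures by different monodromies depending on the multiplicities $\ccN_i$, plays its essential role. Verifying this braid action compatibility, likely via induction on the number of crossings together with a direct intersection calculation on elementary braid generators (as in the model for monochromatic links in \cite{Cr2}), is the technical heart of the proof; the rest is a specialisation computation matching the prefactors.
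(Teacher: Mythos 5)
Your plan follows essentially the same route as the paper's proof: identify the arc classes with weight vectors and the oval classes with the dual (cap/evaluation) data, invoke the Martel identification of the homological braid action with the $R$-matrix action (the paper cites \cite{Martel} directly rather than re-deriving it on generators), and match the prefactor $\prod u_i^{f_i-\sum lk_{i,j}}\cdot\prod x_{C(i)}^{-1}$ with the framing correction and the $K^{-1}$-twisted quantum trace. The only ingredients you do not anticipate are the two preliminary geometric reductions the paper performs (removing the middle circle, contributing $d^{-4\sum i_k}$, and the extremal circle, contributing $y\mapsto [N_1^C]_q$), but these are bookkeeping steps consistent with your overall strategy.
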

We present the proof of this topological model in Section \ref{modelJ}.
\subsubsection{Knot closures} 
From now on we will be looking at the case of knots $K$, seen as closure of braids $\beta_n$, in the context described above (for $l=1$). In this situation, the intersection pairing for coloured Jones polynomials and coloured Alexander polynomials with colour $\cN$ coincide:
\begin{equation}
\PA=J\PA \in \Z[u^{\pm 1},x^{\pm 1},d^{\pm 1}].
\end{equation}
From Theorem \ref{THEOREMJ} and Theorem \ref{THEOREMA} we know that this intersection recovers the two invariants at level $\cN$, as below:
\begin{equation}
\begin{aligned}
&J_{\bar{\cM}}(K) = \PA \Bigm| _{\psi_{\bar{\cM}}}\\
&\Phi^{\cN}_{\la}(K) = \PA \Bigm| _{\sAt}.
\end{aligned}
\end{equation} 

\begin{thm}[{\bf \em Unifying all coloured Jones polynomials of bounded level}]\label{THEOREMAU}
Let us fix $\cN\in \N$. Then, $\PA$ recovers all coloured Jones polynomials of colours $\cM\leq \cN$, as below:
\begin{equation}
\begin{aligned}
J_{\cM}(K)& =~ \PA \Bigm| _{\psi_{\bar{\cM}}}.
\end{aligned}
\end{equation} 
\end{thm}
\subsubsection{New invariants at level $\cN$} \label{ssAu}

\begin{convention}\label{convention}
Let us fix the ring $\LL=\Z[x^{\pm 1},d^{\pm 1}]$. For the following part, having in mind that we work with knots, for the simplicity of the construction we set $y=1$. If we do so, our intersection form $\PA$ recovers the normalised versions of the $\cN^{th}$ coloured Jones and Alexander polynomials. We denote by $J_{\cN}$ the ${\cN}^{th}$ normalised coloured Jones polynomial (in the above section we had $J_{\bar{\cN}}$ the un-normalised version of this invariant). 

Further, by setting $u=x$ we can look at the intersection $$\PA \in \LL=\Z[x^{\pm 1},d^{\pm 1}]$$ and all the specialisations associated to the coloured Jones polynomials induce well-defined specialisations from $\LL$, which we denote by $\snJ$, as in formula \eqref{u1J}:
\begin{equation}
\snJ(x)= d^{1-\cN}.
\end{equation}
\end{convention}
Now for a fixed level $\cN$, we have defined our state sum of Lagrangian intersections $\PA \in \LL$ in the {\bf configuration space of $(n-1)(\cN-1)+2$} points in the  disc. This is given by the set of Lagrangian intersections $\{\langle (\beta_{n} \cup {\mathbb I}_{n+2} ) \ { \color{red} \FA}, {\color{dgreen} \LA} \rangle\}_{\bar{i}\in C(\cN)}.$ For the next step, we define a sequence of nested ideals in $\LL$, denoted by $\cdots \supseteq \tilde{I}^J_{\cN} \supseteq \tilde{I}^J_{\cN+1} \supseteq \cdots$, given by the formulas:
\begin{equation}
\tilde{I}_{\cN}^{J}:=\langle \prod_{i=1}^{\cN} (xd^{i-1}-1) \rangle \subseteq \LL.\\
\end{equation}
 In this manner, we obtain a sequence of associated quotient rings, with maps between them:

\begin{equation}
\begin{aligned}
&\hspace{13mm} l^J_{\cN} \hspace{11mm} l^J_{\cN+1}\\
&\cdots \LLNJ \longleftarrow \LLNNJ \longleftarrow \cdots \ \ \ \ \ \ \ \ \ \ \ \ \ \ \LLhJ.
\end{aligned}
\end{equation}
 In Theorem \ref{levN} we prove the following. 

\begin{thm}[{\bf \em $\cN^{th}$ Unified Jones invariant}] \label{INV} Let $\PAAJ$ be the image of the intersection $\PA$ in the $\cN ^{th}$ quotient ring $\LLNJ$. Then $\PAAJ$ is a well-defined knot invariant recovering all coloured Jones polynomials up to level $\cN$:
$$  \PAAJ|_{\smnbJt}=J_{\cM}(K), \ \ \ \ \ \forall \cM \leq \cN.$$
Here, $\smnbJt$ is the change of coefficients from relation \eqref{quotmn}. We call $\PAAJ$ the $\cN^{th}$ Unified Jones invariant.
\end{thm}
\subsection{Construction of the Geometric Universal Invariant} \label{ssU} 
In this part we show that the level $\cN$ unified Jones invariants have a good asymptotic behaviour and they give knot invariants in the associated projective limit ring. The key result that makes this unification possible is Theorem \ref{THEOREMAU}.
\begin{defn}[Limit ring] We denote the projective limit of this sequence of rings as follows:
\begin{equation}
\LLhJ:= \underset{\longleftarrow}{\mathrm{lim}} \ \LLNJ.
\end{equation}
\end{defn}
\begin{thm}[{\bf \em Universal Coloured Jones Invariant}] \label{THJ}We construct a well-defined knot invariant $\IJJ \in \LLhJ$ as a limit of the {graded intersections via configuration spaces on ovals}, recovering all coloured Jones polynomials:
\begin{equation}\label{eq10:4}
J_{\cN}(K)=  \IJJ|_{\usnJ}.
\end{equation}
In this expression $\usnJ$ is a ring homomorphism (see  Definition \ref{pA4}).
\end{thm}

We prove this topological model in Section \ref{modelA}.

\subsection{Geometry of the models for coloured Jones versus coloured Alexander invariants for the link case} (Figure \ref{CJA})

\

Let us recall the set-up for the non-semisimple invariants, which we saw in Subsection \ref{ssA}. 

{\bf \em Parameters for the topological model for $\Phi^{\cN}_{\la}$} 
\begin{itemize}
\item Number of particles: $m_A(\cN):=2+(n-1)(\cN-1)$
\item Configuration space: $C_{n,m_A(\cN)}:=\Conf_{2+(n-1)(\cN-1)}\left(\mathbb D_{2n+2}\right)$
\item Multi-level: $\bar{\cN}:=(\cN_1,...,\cN_n)=(1,\cN-1,...,\cN-1)$; \ \ Local system: $\bar{\Phi}^{\bar{\cN}}$
\item Specialisation of coefficients: $\sAt$ (Notation \ref{pA1}, Definition \ref{pA2}).
\end{itemize}
\begin{defn}(Set of states at level $\cN$) In this case, our indexing set will be given by:
 $$C(\cN):=\{\bar{0},\dots,\overline{\cN-1}\}= \big\{ \bar{i}=(i_1,...,i_{n-1})\in \N^{n-1} \mid 0\leq i_k \leq \cN-1, \  \forall k\in \{1,...,n-1\} \big\}.$$ 
\end{defn}

There are three parameters used for $\PA$ which are intrinsic and do not depend on the choice of the colour $\bar{\lambda}$:
\begin{enumerate}
\item The set of states $C(\cN)$ (in contrast to the case of coloured Jones polynomials- Subsection \ref{stJi})
\item The multi-level $\bar{\cN}$, the local system $\bar{\Phi}^{\bar{\cN}}$ and so the covering space which depend just on ${\cN}$
\item The modified dimension is encoded by the variable $y$, being intrinsic, not depending on $\bar{\lambda}$.
\end{enumerate} 

In this part we compare the main geometric differences between the models for $\Phi^{\cN}_{\bar{\lambda}}$ and $J_{N_1,...,N_l}(L,q)$.
\vspace{-10mm}
\begin{center}
\begin{tikzpicture}
[x=1mm,y=0.8mm]

\node (b2) [color=black] at (-45,0)   {Coloured Alexander polyn.};
\node (b1)               at (30,0)    {Coloured Jones polyn.};
\node (b4) [color=black] at (-45,-10)   {Level $\cN \in \N$};
\node (b3) [color=black] at (40,-10)   {Colours $N_1,...,N_l \in \N \rightarrow N^C_1,...,N^C_n$ };
\node (b3) [color=black] at (38,-15)   {$\bar{N}:=(N_1,...,N_l)$};

\node (a2)               at (-5,-20)    {Number of particles in};
\node (a2)               at (-5,-25)    {the configuration space};

\node (b3) [color=black] at (-50,-25)   {$m_A(\cN):=2+(n-1)(\cN-1)$};
\node (b4) [color=black] at (50,-25)   {$m_J(\bar{N}):=2+\sum_{i=2}^{n} N^C_i$};

\node (a2)               at (-5,-35)    {Coloured Multi-level};

\node (b3) [color=black] at (-50,-35)   {$\bar{\cN}:=(1,\cN-1,...,\cN-1)$};
\node (b4) [color=black] at (50,-35)   {$\bar{\cN}(\bar{N}):=(1,\ccN_2-1,...,\ccN_n-1)$};

\end{tikzpicture}
\end{center}
We remark that for coloured Jones polynomials the set of states depends on the colours, but for coloured Alexander polynomials $C(\cN)$ it is intrinsic. Also, the multi-level $\bar{\cN}$ and the associated covering space from Theorem \ref{THEOREMAU} depend just on the level of the root of unity $\cN$ but not on the individual colours $\bar{\lambda}$ (see Figure \ref{CJA}). 
 \begin{figure}[H]
\centering
\vspace{-7mm}
$$\tilde{C}_{n,m_A(\cN)} \hspace{15mm} \text{ Lawrence covering space } \hspace{15mm} \tilde{C}_{n,m_J(\bar{N})}$$
\vspace*{-7mm}
$$\hspace{5mm}\downarrow \ \nexists \ \ \text{lifts} $$
$$\bar{C}^{\bar{\cN}}_{n,m_A(\cN)} \hspace{15mm} \text{ Level } \bar{\cN}  \text { covering space } \hspace{18mm} \bar{C}^{\bar{\cN}(\bar{N})}_{n,m_J(\bar{N})}$$
$$\text{Independent    }  \hspace{28mm} \normalsize \tilde{L}(\bar{\cN}) \hspace{30mm} \text{ Depends    } $$
$$ \ \ \ \ \text{ \ \ \ \ \ \ \ \ on the colours} \ \ \bar{\lambda} \hspace{22mm}\downarrow \text{ well-defined lifts \ \ \ \ \ \ \  on the colours } \bar{N} \ \ \ \ $$
\vspace{-4mm}

\includegraphics[scale=0.38]{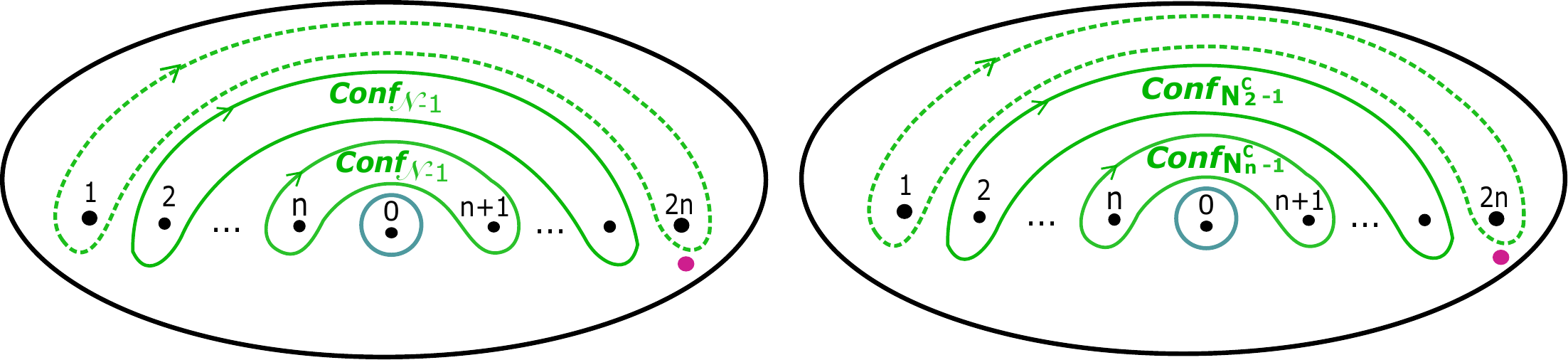}
\vspace*{-40mm}

$$\normalsize L(\bar{\cN})$$
\vspace*{25mm}

$C_{n,m_A(\cN)}:=\Conf_{2+(n-1)(\cN-1)}\left(\mathbb D_{2n+2}\right)\hspace{20mm} C_{n,m_J(\bar{N})}:=\Conf_{1+\sum_{i=2}^{n} N^C_i}\left(\mathbb D_{2n+2}\right)$
\caption{ \normalsize Covering spaces for coloured Alexander versus coloured Jones polynomials}\label{CJA}
\vspace*{1mm}
\end{figure}

\section{Notations} \label{S:not}
\begin{notation}(Specialisations of modules)\label{N:spec}\\ 
Let $N$ be a module over a ring $R$. Let $R'$ be another ring and suppose that we have a specialisation of the coefficients, meaning a morphism:
$\psi: R \rightarrow R'.$
We denote by  
$N|_{\psi}:=N \otimes_{R} R'$
the specialisation of the module $N$ by the function $\psi$. 
\end{notation}
\begin{notation}[Quantum numbers and modified dimensions]
$$ \{ x \}_q :=q^x-q^{-x} \ \ \ \ [x]_{q}:= \frac{q^x-q^{-x}}{q-q^{-1}}.$$
For $N\in \N$, $\xi_N=e^{\frac{2\pi i}{2N}}$ and $\lambda \in \C$, the associated modified dimension is given by:
$$ d(\lambda):= \frac{\{\lambda\}_{\xi_N}}{\{N\lambda\}_{\xi_N}}.$$
\end{notation}
\begin{defn}(Left $\slash$ right hand side of the disc) We separate the punctured disc into two halves, as follows: 
\begin{itemize}
\item (Left hand side of the punctured disc) This will be the half of the disc from Figure \ref{CJA} that contains the first $n$ $p$-punctures and passes though the puncture labeled by $0$.
\item (Right hand side of the punctured disc) This will be the other half of the disc from Figure \ref{CJA} that contains the rest of the $p$-punctures.
\end{itemize}

\end{defn}

\begin{defn}[Multi-indices at level $\bar{\cM}$ and $\bar{\cN}$]

We will use the following sets of multi-indices, which will parametrise the states for our models:
\begin{equation}\label{mi}
\begin{aligned}
&\{\bar{0},\dots,\overline{\cN-1}\}:= \big\{ \bar{i}=(i_1,...,i_{n-1})\in \N^{n-1} \mid 0\leq i_k \leq \cN-1, \  \forall k\in \{1,...,n-1\} \big\}\\
&\{\bar{\cM},\dots,\overline{\cN-1}\}:= \big\{ \bar{i}=(i_1,...,i_{n-1})\in \N^{n-1} \mid 0\leq i_k \leq \cN-1, \  \forall k\in \{1,...,n-1\} \text{ and }\\
& \hspace{90mm} \exists j\in \{1,...,n-1\}, \  \cM\leq i_j \big\}
\end{aligned}
\end{equation}

\end{defn}
\begin{defn}[Multi-indices]
For a choice of parameters $l \in \N$, $l\geq1$ and $\alpha_1,..,\alpha_l, \eta,..,\eta_l \in \mathbb C$, $N_1,...,N_l \in \N$ we denote the following associated multi-indices:
\begin{equation}
\begin{cases}
\bar{\alpha}:=(\alpha_1,..,\alpha_l)\\
\bar{\eta}:=(\eta_1,..,\eta_l)\\
\bar{N}:=(N_1,...,N_l).
\end{cases}
\end{equation} 
\end{defn}
\subsection{Specialisations for the homology groups- first specialisation and globalised specialisation}
\begin{defn}[Specialisations for the homology groups] Let us consider two multi-indices: $\bar{\alpha}\in \C^{l}$ and $\bar{\eta}\in (\C[q^{\pm 1}])^{\bar{l}}$. We denote the following specialisation of coefficients:
$$ \psi_{q,\bar{\alpha},\bar{\eta}}: \Z[x_1^{\pm 1},...,x_{l}^{\pm 1},y_1^{\pm 1},...,y_{\bar{l}}^{\pm 1}, d^{\pm 1}] \rightarrow \C[q^{\pm \frac{1}{2}},q^{\pm \frac{\alpha_1}{2}},...,q^{\pm \frac{\alpha_l}{2}}]$$
\begin{equation}\label{notlk}
\begin{cases}
&\psi_{q,\bar{\alpha},\bar{\eta}}(x_j)=q^{\alpha_j}, \ j\in \{1,...,l\}\\
&\psi_{q,\bar{\alpha},\bar{\eta}}(y_j)=\eta_j,j\in \{1,...,l\}\\
&\psi_{q,\bar{\alpha},\bar{\eta}}(d)= q^{-1}. 
\end{cases}
\end{equation} 
\end{defn}

\begin{defn}[Globalised specialisations] Let us consider an integer number $t$, and two multi-indices of $l$ colours: $\bar{\alpha}\in \C^{l}$ and $\bar{\eta}\in \C^{l}$. Associated to this data, we define the specialisation of coefficients:
$$ \psi^{t}_{q,\bar{\alpha},\bar{\eta}}: \Z[u_1^{\pm 1},...,u_{l}^{\pm 1},x_1^{\pm 1},...,x_{l}^{\pm 1},y_1^{\pm 1},...,y_{l}^{\pm 1}, d^{\pm 1}] \rightarrow \C[q^{\pm \frac{1}{2}},q^{\pm \frac{\alpha_1}{2}},...,q^{\pm \frac{\alpha_l}{2}}]$$
\begin{equation}\label{not}
\begin{cases}
&\psi^{t}_{q,\bar{\alpha},\bar{\eta}}(u_j)=\left(\psi^{t}_{q,\bar{\alpha},\bar{\eta}}(x_j)\right)^t=q^{t\alpha_j}\\
&\psi^{t}_{q,\bar{\alpha},\bar{\eta}}(x_j)=q^{\alpha_j}, \ j\in \{1,...,l\}\\
&\psi^{t}_{q,\bar{\alpha},\bar{\eta}}(y_j)=\eta_j\\
&\psi^{t}_{q,\bar{\alpha},\bar{\eta}}(d)=q^{-1}. 
\end{cases}
\end{equation} 
\end{defn}
\begin{defn}(Specialisations of coefficients)\label{spec}\\
For a colouring $C:\{1,...,n\}\rightarrow \{1,...,l\}$ we consider the specialisation of coefficients as below:
\begin{center}
\begin{tikzpicture}
[x=1.2mm,y=1.4mm]
\node (b1)               at (-27,0)    {$\Z[x_1^{\pm 1},...,x_n^{\pm 1},y^{\pm 1}, d^{\pm 1}]$};
\node (b2)   at (27,0)   {$\Z[x_1^{\pm 1},...,x_l^{\pm 1},y^{\pm 1}, d^{\pm 1}]$}; 
\node (b3)   at (0,-20)   {$\C[q^{\pm \frac{1}{2}},q^{\pm \frac{\alpha_1}{2}},...,q^{\pm \frac{\alpha_l}{2}}]$};
,\draw[->]   (b1)      to node[xshift=1mm,yshift=5mm,font=\large]{$f_C$ \eqref{eq:8}}                           (b2);
\draw[->]             (b2)      to node[right,xshift=2mm,font=\large]{$\psi^{t}_{q,\bar{\alpha},\bar{\eta}}$\ \eqref{not}}   (b3);
\draw[->,thick,dotted]             (b1)      to node[left,font=\large]{}   (b3);
\end{tikzpicture}
\end{center}
\end{defn}
\begin{notation}\label{not'}
In the formulas from the paper we denote by $N^C_i:=N_{C(i)}$.
\end{notation}
\begin{defn}(Our setting: specialisation corresponding to a braid closure) \label{introcol}\\
We will use this change of coefficients in the situation where $n$ is replaced by $2n$ and these $2n$ points inherit a colouring with $l$ colours coming from a braid closure of a braid with $n$ strands:
\begin{equation}\label{eq:col}
C:\{1,...,2n\}\rightarrow \{1,...,l\}.
\end{equation}
\end{defn}

\subsection{Coloured Jones polynomials- generic parameters}
\begin{defn}[Specialisation for the classes associated to coloured Jones polynomials] Let us fix the multi-indices:
\begin{equation}
\begin{cases}
\bar{N}:=(N_1-1,...,N_l-1)\\
\overline{[N]}:=([N^C_1]_{q})\\
\end{cases}
\end{equation}
Then, we obtain the following specialisation associated to $\bar{\alpha}=\bar{N}$ and $\eta=\overline{[N]}$:
$$ \sJ: \C[x_1^{\pm 1},...,x_{l}^{\pm 1},y^{\pm1},d^{\pm 1}] \rightarrow \C[q^{\pm 1}]$$
\begin{equation}\label{sJc}
\begin{cases}
&\sJ(x_i)=q^{N^C_i-1}, \ i\in \{1,...,l\}\\
&\sJ(y)=[N^C_1]_{q},\\
&\sJ(d)= q^{-1}.
\end{cases}
\end{equation}
\end{defn}
For the case of the link invariants, we will use the globalised specialisation associated to the multi-indices:
\begin{equation}
\begin{cases}
\bar{N}:=(N_1-1,...,N_l-1)\\
\overline{[N]}:=([N^C_1]_{q})\\
t=1.
\end{cases}
\end{equation} 
\begin{notation}[Specialisation at generic $q$]\label{pJ1}
Let us denote the specialisation associated to the above parameters as:
\begin{equation}
\sJt:=\sJtt.
\end{equation}
\end{notation}

\begin{defn}[Specialisation for coloured Jones polynomials]\label{pJ2}
The associated specialisation of coefficients is given by:
$$ \sJt: \C[u_1^{\pm 1},...,u_{l}^{\pm 1},x_1^{\pm 1},...,x_{l}^{\pm 1},y^{\pm 1},d^{\pm 1}] \rightarrow \C[q^{\pm 1}]$$
\begin{equation}\label{eq:8''''} 
\begin{cases}
&\sJt(u_j)=\left(\sJt(x_j)\right)^t=q^{N_i-1}\\
&\sJt(x_i)=q^{N_i-1}, \ i\in \{1,...,l\}\\
&\sJt(y)=[N^C_1]_{q},\\
&\sJt(d)= q^{-1}.
\end{cases}
\end{equation}
\end{defn}
\subsection{Coloured Alexander polynomials- parameters at roots of unity}

\begin{defn}[Specialisation for the classes associated to coloured Alexander polynomials] Let us fix the multi-indices:
\begin{equation}
\begin{cases}
\bar{\lambda}:=(\lambda_1,...,\lambda_l)\\
\overline{d(\lambda)}:=([\lambda_{C(1)}]_{\xi_{\cN}})\\
\end{cases}
\end{equation}
Then, we consider the following specialisation, which is associated to the multi-indices $\bar{\alpha}=\bar{\lambda}$ and $\eta=\overline{d(\lambda)}$:
$$ \sA: \C[x_1^{\pm 1},...,x_{l}^{\pm 1},y^{\pm1},d^{\pm 1}] \rightarrow \C[\xi_{\cN}^{\pm \frac{1}{2}},\xi_{\cN}^{\pm \frac{\lambda_1}{2}},...,\xi_{\cN}^{\pm \frac{\lambda_l}{2}}]$$
\begin{equation}
\begin{cases}
&\sA(x_i)=\xi_{\cN}^{\lambda_i}, \ i\in \{1,...,l\}\\
&\sA(y)=([\lambda_{C(1)}]_{\xi_{\cN}}),\\
&\sA(d)= \xi_{\cN}^{-1}.
\end{cases}
\end{equation}
\end{defn}
For this case of the link invariants, we use the globalised specialisation associated to the multi-indices:
\begin{equation}
\begin{cases}
\bar{\lambda}:=(\lambda_1,...,\lambda_l)\\
\overline{d(\lambda)}:=([\lambda_{C(1)}]_{\xi_{\cN}})\\
t=1-\cN.
\end{cases}
\end{equation} 

\begin{notation}[Specialisation at roots of unity]\label{pA1}
Let us denote the specialisation associated to the above parameters as:
\begin{equation}
\sAt:=\sAtt.
\end{equation}
\end{notation}
\begin{defn}[Specialisation for coloured Alexander polynomials]\label{pA2}
The associated specialisation of coefficients is given by:
$$ \sAt: \C[u_1^{\pm 1},...,u_{l}^{\pm 1},x_1^{\pm 1},...,x_{l}^{\pm 1},y^{\pm 1}, d^{\pm 1}] \rightarrow \C[\xi_{\cN}^{\pm \frac{1}{2}},\xi_{\cN}^{\pm \frac{\lambda_1}{2}},...,\xi_{\cN}^{\pm \frac{\lambda_l}{2}}]$$
\begin{equation}\label{eq:8''''} 
\begin{cases}
&\sAt(u_j)=\left(\sJt(x_j)\right)^{1-\cN}=\xi_{\cN}^{(1-\cN)\lambda_i}\\
&\sAt(x_i)=\xi_{\cN}^{\lambda_i}, \ i\in \{1,...,l\}\\
&\sAt(y)=([\lambda_{C(1)}]_{\xi_{\cN}}),\\
&\sAt(d)= \xi_{\cN}^{-1}.
\end{cases}
\end{equation}
\end{defn}
\subsection{Specialisations of coefficients for universal invariants}
\begin{defn}[Rings for the universal invariant] Let us define the following rings:
 \begin{equation}
 \begin{cases}
 \LL:=\Z[x^{\pm 1},d^{\pm 1}]\\
\LNJ=\Z[d^{\pm 1}] .
 \end{cases}
 \end{equation}
\end{defn}
\begin{defn}[Specialisation for the universal invariant] 

\

We change the ring of coefficients as above, and let us define the associated globalised specialisation of coefficients as below:
$$ \snJ: \LL=\Z[x^{\pm 1},d^{\pm 1}] \rightarrow \LNJ$$
\begin{equation}\label{u1J} 
\snJ(x)= d^{1-\cN}.
\end{equation}
\end{defn}

\begin{defn}[Universal specialisation map, as in \eqref{eq10:limit}] \label{pA4}We have the projective limit of the sequence of rings:
\begin{equation*}
\LLhJ:= \underset{\longleftarrow}{\mathrm{lim}} \ \LLNJ.
\end{equation*}
Then, we have a well-defined induced universal specialisation map, which we denote:
\begin{equation}
\usnJ: \LLhJ \rightarrow \LNJ.
  \end{equation}
\end{defn}

\subsection{Summary: Diagram with all the specialisations of coefficients for link invariants}
We summarize the specialisations of coefficients and homology groups in Figure \ref{diagsl}. They use also the following definition, which has an important role for the proof of the lifting property of submanifolds given by configuration spaces based on ovals presented in Section \ref{S:4}.

\begin{defn}[$\bar{\cN}$-change of variables for lifting configurations on ovals]
Let 
\begin{equation}
\begin{aligned}
\tilde{f}^{\bar{\cN}}: \ &\Z[x_1^{\pm 1},...,x_n^{\pm 1},\bar{x}_1^{\pm 1},...,\bar{x}_n^{\pm 1},y_1^{\pm 1}..., y_{\bar{l}}^{\pm 1}, \bar{d'}^{\pm 1}]\rightarrow \Z[x_1^{\pm 1},...,x_n^{\pm 1},y_1^{\pm 1}..., y_{\bar{l}}^{\pm 1}, d'^{\pm 1}]\\
\end{aligned}
\end{equation}
given by:
\begin{equation}
\begin{cases}
&\tilde{f}^{\bar{\cN}}(x_i)=x_i,\\    
&\tilde{f}^{\bar{\cN}}(\bar{x_i})=x_i\cdot d'^{(\cN_i-1)},\\ 
&\tilde{f}^{\bar{\cN}}(y_j)=y_j,\\ 
&\tilde{f}^{\bar{\cN}}(\bar{d'})=d'^{2}.
\end{cases}
\end{equation}

\end{defn}

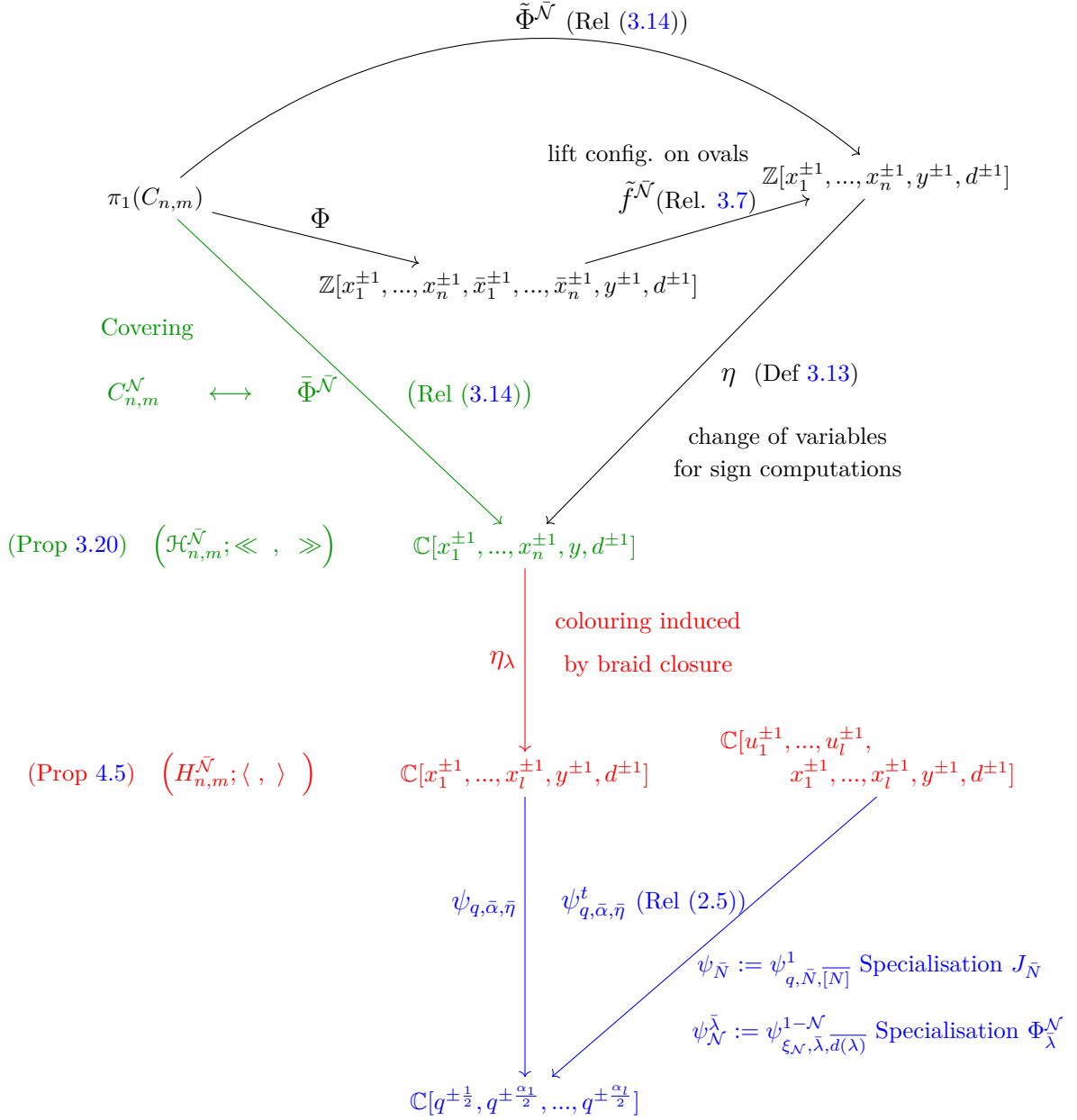
\begin{figure}[H]
\begin{center}
\begin{tikzpicture}
[x=1.2mm,y=1.6mm]

\node (O)               at (65,54)    {lift config. on ovals};
\node (Or)               at (82,28)    {change of variables};
\node (Or)              at (82,25)    {for sign computations};

\node (A) [color=red]              at (65,11)    {colouring induced};
\node (A) [color=red]              at (65,7)    {by braid closure};

\node (A) [color=blue]              at (92,-21)    {$\sJt:=\sJtt$ Specialisation $J_{\bar{N}}$};
\node (A) [color=blue]              at (93,-27)    {$\sAt:=\sAtt$ Specialisation $\Phi^{\cN}_{\bar{\lambda}}$};

\node (b1)               at (5,50)    {$\pi_1(C_{n,m})$};
\node (b20) [color=dgreen] at (4,38)   {Covering};
\node (b200) [color=dgreen] at (8,32)   {$C_{n,m}^{\cN}$ $ \ \ \ \ \ \longleftrightarrow$};
\node (b2) [color=dgreen] at (50,18)   {$\C[x_1^{\pm 1},...,x_n^{\pm 1},y^{\pm 1}, d^{\pm 1}]$};
\node (b22) [color=dgreen] at (7,18)   {(Prop \ref{P:3'''}) \ \ $\left(\mathscr H^{\bar{\cN}}_{n,m}; \ll ~,~ \gg  \right)$};
\node (b3) [color=red]   at (50,-3)   {$\C[x_1^{\pm 1},...,x_l^{\pm 1},y^{\pm 1}, d^{\pm 1}]$};
\node (b33) [color=red] at (7,-3)   {(Prop \ref{P:3'}) \ \ $\left( H^{\bar{\cN}}_{n,m}; \left\langle ~,~ \right\rangle \ \ \right)$};
\node (b3') [color=red]   at (83,0)   {$\C[u_1^{\pm 1},...,u_l^{\pm 1},$};
\node (b33') [color=red]   at (96,-3)   {$x_1^{\pm 1},...,x_l^{\pm 1},y^{\pm 1}, d^{\pm 1}]$};
\node (b4) [color=blue]  at (50,-33)   {$\C[q^{\pm \frac{1}{2}},q^{\pm \frac{\alpha_1}{2}},...,q^{\pm \frac{\alpha_l}{2}}]$};
\node (b1') [color=black]  at (48,42)   {$\Z[x_1^{\pm 1},...,x_n^{\pm 1},\bar{x}_1^{\pm 1},...,\bar{x}_n^{\pm 1},y^{\pm 1}, d^{\pm 1}]$};
\node (b1'') [color=black]  at (94,52)   {$\Z[x_1^{\pm 1},...,x_n^{\pm 1},y^{\pm 1}, d^{\pm 1}]$};

\draw[->,color=dgreen]   (b1)      to node[yshift=-3mm,xshift=11mm,font=\large]{$\bar{\Phi}^{\bar{\cN}} \hspace{10mm} \normalsize{(\text{Rel } \eqref{sgr})}$}                           (b2);
\draw[->,color=red]             (b2)      to node[left,font=\large]{$\eta_{\lambda} $}   (b3);
\draw[->,color=black]             (b1')      to node[left,xshift=10mm,yshift=5mm,font=\large]{$\tilde{f}^{\bar{\cN}}$\normalsize{(Rel. \ref{N-chh})}}   (b1'');
\draw[->,color=blue,yshift=5mm]             (b3)      to node[left,font=\large,yshift=5mm]{$\psi_{q,\bar{\alpha},\bar{\eta}}$}   (b4);
\draw[->,color=blue]             (b33')      to node[left,font=\large,xshift=6mm,yshift=5mm]{$\psi^t_{q,\bar{\alpha},\bar{\eta}}$ \normalsize{(Rel \eqref{not})}}   (b4);
\draw[->,color=black]   (b1)      to node[right,xshift=-2mm,yshift=3mm,font=\large]{$\Phi$}                        (b1');
\draw[->,color=black]   (b1)      to [out=40,in=140] node[right,xshift=-2mm,yshift=3mm,font=\large]{$\tilde{\Phi}^{\bar{\cN}}$ {\normalsize{(Rel \eqref{sgr})}}}                        (b1'');
\draw[->,color=black]   (b1'')      to node[right,yshift=-2mm,xshift=1mm,font=\large]{$\eta$ \normalsize{ (Def \ref{groupring'})}}                        (b2);
\end{tikzpicture}
\end{center}
\vspace{-3mm}
\caption{\normalsize Specialisations of coefficients: Coloured link invariants}\label{diagsl}
\end{figure}

\section{Homological set-up} \label{S3}
Let us consider $n,\bl \in \N$. 
We denote by $\mathscr D_{2n+1,\bl}$ the $(2n+\bl+1)$-punctured disc where the set of punctures is spit as follows:

\begin{itemize}
\setlength\itemsep{-0.2em}
\item[•]$2n$ punctures placed horizontally, called $p$-punctures (denoted by $\{1,..,2n\}$)
\item[•]$1$ puncture called $q$-puncture (labeled by $\{0\}$)
\item[•]$\bar{l}$ punctures placed as in Figure \ref{Localsystt}, called $s$-punctures (labeled by $\it{\{1,...,\overline{l}\}}$ ).
\end{itemize}
\begin{convention}If $\bar{l}=1$, which will be the case for our models from this paper, we will remove it from the indices of our configuration spaces and homology groups.
\end{convention}

\subsection{Configuration space of the punctured disc}

Now, for $m\in \N$ we consider the unordered configuration space of $m$ points in the punctured disc $\mathscr D_{2n+1,\bl}$, and denoted it by:
 $$\Clm:=\Conf_{m}(\mathscr D_{2n+1,\bl}).$$ 
We fix a base point of this configuration space, by choosing $d_1,..d_m \in \partial \hspace{0.5mm}\mathscr D_{2n+1,\bl}$ and consider ${\bf d}=(d_1,...,d_m)$ to be the associated point in the configuration space. In the following part we construct a local system on the space $\Clm$. 

We assume that $m \geq 2$. Then, let us start with the abelianisation to the first homology group of this configuration space, which has the following form.
\begin{prop}[Abelianisation to the homology group]
 Let $[ \ ]: \pi_1(\Clm) \rightarrow H_1\left( \Clm\right)$ be the abelianisation of the fundamental group of our configuration space. Its homology has the structure presented below:
\begin{equation*}
\begin{aligned}
H_1\left( \Clm \right)  \simeq \ \ \ \  \ & \Z^{n+1} \ \ \ \ \oplus \ \ \ \  \Z^{n} \ \ \ \ \ \oplus \ \ \ \ \Z^{\bl} \ \ \ \  \oplus \ \ \ \ \Z\\
&\langle [\sigma_i] \rangle \ \ \ \ \ \ \ \ \ \langle [\bar{\sigma}_{i'}] \rangle \ \ \ \ \ \ \ \ \ \langle [\gamma_j] \rangle \ \ \ \ \ \ \ \ \langle [\delta]\rangle,  \ \ \ {i\in \{0,...,n\}}, j\in \{1,...,\bl\},\\
& \hspace{69mm} {i'\in \{1,...,n\}}.
\end{aligned}
\end{equation*}
The five types of generators are presented in the picture below. 
\begin{figure}[H]
\centering
\includegraphics[scale=0.26]{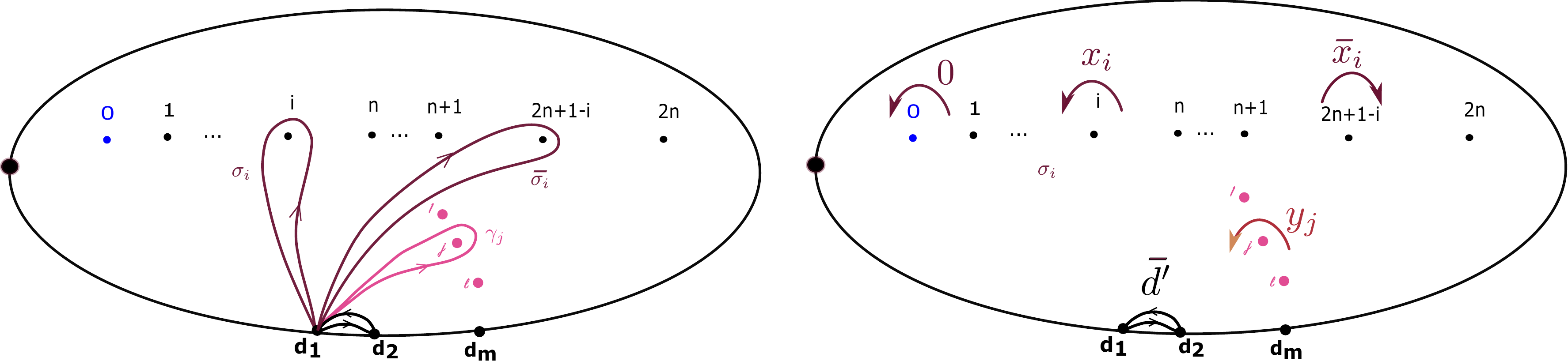}
\caption{\normalsize Local system}
\label{Localsystt}
\end{figure}
\end{prop}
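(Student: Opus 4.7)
The plan is to identify $H_1(C^{\bar{l}}_{n,m})$ with the abelianization of $\pi_1(C^{\bar{l}}_{n,m})$ via Hurewicz, then invoke the standard computation of this abelianization for unordered configuration spaces on a punctured disc. Concretely, since $C^{\bar{l}}_{n,m}$ is path-connected, Hurewicz reduces the claim to computing the abelianization of the surface braid group $B_m\!\left(\mathscr{D}_{2n+1,\bar{l}}\right)$ of $m$ strands on a disc with $N:=2n+1+\bar{l}$ punctures. The statement that for $m\geq 2$ this abelianization is $\mathbb{Z}^N\oplus\mathbb{Z}$ is classical: the free rank $N$ is produced by winding numbers of the configuration around each puncture, while the extra $\mathbb{Z}$ is produced by the image of any Artin generator (the "swap" class), all Artin generators becoming identified after abelianizing.

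Linear independence of the listed classes $[\sigma_i]$, $[\bar{\sigma}_{i'}]$, $[\gamma_j]$, $[\delta]$ is the content I would verify explicitly, by constructing $N+1$ homomorphisms $H_1(C^{\bar{l}}_{n,m})\to\mathbb{Z}$ that detect them one at a time. For each puncture $p$, define
\[
w_p\colon H_1(C^{\bar{l}}_{n,m})\longrightarrow\mathbb{Z},\qquad [\gamma]\longmapsto \sum_{k=1}^{m}\operatorname{wind}_p\bigl(\gamma_k\bigr),
\]
where $\gamma_1,\ldots,\gamma_m$ are the trajectories of the $m$ configuration points in any lift of $\gamma$ to the ordered configuration space (the total sum is independent of the lift). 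This $w_p$ vanishes on $[\delta]$ (the swap happens in a neighbourhood of the basepoint, far from any puncture) and on every other single-puncture generator; it equals $1$ on the generator encircling $p$. Separately, define the diagonal homomorphism
\[
w_\Delta\colon H_1(C^{\bar{l}}_{n,m})\longrightarrow\mathbb{Z}
\]
by the signed count of transverse intersections with the fat diagonal (equivalently, the exponent sum in the symmetric group $S_m$-image of the braid, lifted to $\mathbb{Z}$ via half-twists). Then $w_\Delta$ vanishes on all puncture loops and equals $1$ on $[\delta]$. The product map $(w_{\text{punctures}},w_\Delta)$ is jointly injective on the cyclic groups generated by the listed classes, so those classes are linearly independent.

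For generation, I would use the standard presentation of $B_m(\mathscr{D}_N)$ obtained from the Fadell–Neuwirth fibration (or, more concretely, from Birman's presentation for surface braid groups): generators are the classical half-twists $\sigma_1,\ldots,\sigma_{m-1}$ of two adjacent strands together with the loops $a_k^{(p)}$ in which the $k$-th strand goes once around the puncture $p$. In the abelianization all $\sigma_k$'s collapse to a single class (hence $[\delta]$), and the loops $a_k^{(p)}$ for different $k$ also collapse to a single class per puncture (hence $[\sigma_i]$, $[\bar{\sigma}_{i'}]$, or $[\gamma_j]$ depending on the type of $p$). Combining this with the previous paragraph yields the asserted decomposition
\[
H_1\!\left(C^{\bar{l}}_{n,m}\right)\simeq \mathbb{Z}^{\,n+1}\oplus\mathbb{Z}^{\,n}\oplus\mathbb{Z}^{\,\bar{l}}\oplus\mathbb{Z}.
\]

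The only genuine obstacle, as opposed to bookkeeping, is matching the picture in Figure~\ref{Localsystt} to the algebraic generators: one must check that the two types of puncture loops $\sigma_i$ and $\bar{\sigma}_{i'}$ really correspond to the two groups of $p$-punctures (respectively $\{0,1,\ldots,n\}$ and $\{1,\ldots,n\}$ on the right half of the disc), and that the curve labelled $[\delta]$ is a swap loop rather than, say, a loop enclosing two adjacent $p$-punctures—the two are visually similar but differ by a combination of the $[\sigma_i]$'s. This is checked directly from the drawn generators and does not require any further input beyond the standard computation above.
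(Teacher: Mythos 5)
Your proposal is correct. The paper states this proposition without proof, treating it as the classical computation of $H_1$ of the unordered configuration space of a punctured disc (for $m\geq 2$): rank one per puncture from total winding numbers, plus one extra $\mathbb{Z}$ from the swap class, which is exactly the argument you give via Hurewicz, the Birman/Fadell--Neuwirth presentation, and the detecting homomorphisms $w_p$ and $w_\Delta$. The only cosmetic remark is that $w_\Delta$ is cleanest described as the composite $\pi_1(\Clm)\to B_m(\mathbb{D}^2)\to\mathbb{Z}$ (exponent sum of Artin generators, equivalently the signed count of crossings with the fat diagonal), rather than as a lift of the $S_m$-image, which only sees the parity; your parenthetical already points at the right invariant, so this is not a gap.
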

\subsection{Local system and covering space at level $\bar{\cN}$}
\begin{defn}[Augmentation map] After this first step, we consider the following augmentation map $$\nu: H_1\left(\Clm\right)\rightarrow \Z^n \oplus \Z^n \oplus \Z^{\bl} \oplus \Z$$ 
$$ \hspace{28mm} \langle x_i \rangle \ \ \langle \bar{x_i} \rangle \ \ \langle y_j\rangle \ \ \langle \bar{d'} \rangle$$ defined by the formulas:
\begin{equation}
\begin{cases}
&\nu(\sigma_0)=0\\
&\nu(\sigma_i)=2x_i,\\ 
&\nu(\bar{\sigma}_i)=2\bar{x}_i, i\in \{1,...,n\}\\
&\nu(\gamma_j)=y_j, j\in \{1,...,\bl\}\\
&\nu(\delta)=\bar{d'}.
\end{cases}
\end{equation}
\end{defn}
\begin{defn}(The first local system)\label{localsystem0}
Let us consider the local system that is given by the composition of the two morphisms from above:
\begin{equation}
\begin{aligned}
&\Phi: \pi_1(\Clm) \rightarrow \Z^n \oplus \Z^n \oplus \Z^{\bar{l}} \oplus \Z\\
&\hspace{24mm} \langle x_i \rangle \ \ \langle \bar{x_i} \rangle \ \ \langle y_j\rangle \ \ \langle \bar{d'} \rangle, \ i \in \{1,...,n\}, \ j \in \{1,...,\bl\}\\
&\Phi= \nu \circ [ \ ]. \ \ \ \ \ \ \ \ \ \ \ \ \ \ \ \ \ \ \ \ 
\end{aligned}
\end{equation}
\end{defn}
In the next part we continue with a smaller local system, which will depend on a choice of a sequence of ``levels''. This will be used in order to make sure that our submanifolds which have geometric supports given by configuration spaces on circles lift to the covering associated to this local system. 
\begin{defn}[Multi-level]
Let us fix a sequence of levels $\cN_1,...,\cN_n \in \N$ and consider a ``multi-level'' associated to this set, given by:
\begin{equation}
\bar{\cN}:=(\cN_1,...,\cN_n).
\end{equation}
\end{defn}

\begin{defn}($\bar{\cN}$-change of variables)\label{N-ch}
For a multi-level $\bar{\cN}$, let us consider the morphism 
\begin{equation}
\begin{aligned}
f^{\bar{\cN}}: \ &\Z^n \oplus \Z^n \oplus \Z^{\bar{l}} \oplus \Z \rightarrow \Z^n \oplus  \Z^{\bar{l}} \oplus \Z\\
&\langle x_i \rangle \ \ \langle \bar{x_i} \rangle \ \ \langle y_j\rangle \ \ \langle \bar{d'} \rangle \hspace{2mm} \langle x_i \rangle \ \ \langle y_j\rangle \ \ \langle d' \rangle, \ i \in \{1,...,n\}, \ j \in \{1,...,\bl\}\\
\end{aligned}
\end{equation}
given by:
\begin{equation}
\begin{cases}
&f^{\bar{\cN}}(x_i)=x_i,\\    
&f^{\bar{\cN}}(\bar{x_i})=x_i+(\cN_i-1)d',\\ 
&f^{\bar{\cN}}(y_j)=y_j,\\ 
&f^{\bar{\cN}}(\bar{d'})=2d'.
\end{cases}
\end{equation}
\end{defn}

\begin{defn}($\bar{\cN}$-change of variables at the level of the group ring)\label{N-chh}
Let us define the morphism induced by $f^{\bar{\cN}}$ at the level of group rings:
\begin{equation}
\begin{aligned}
\tilde{f}^{\bar{\cN}}: \ &\Z[x_1^{\pm 1},...,x_n^{\pm 1},\bar{x}_1^{\pm 1},...,\bar{x}_n^{\pm 1},y_1^{\pm 1}..., y_{\bar{l}}^{\pm 1}, \bar{d'}^{\pm 1}]\rightarrow \Z[x_1^{\pm 1},...,x_n^{\pm 1},y_1^{\pm 1}..., y_{\bar{l}}^{\pm 1}, d'^{\pm 1}]\\
\end{aligned}
\end{equation}
given by:
\begin{equation}
\begin{cases}
&\tilde{f}^{\bar{\cN}}(x_i)=x_i,\\    
&\tilde{f}^{\bar{\cN}}(\bar{x_i})=x_i\cdot d'^{(\cN_i-1)},\\ 
&\tilde{f}^{\bar{\cN}}(y_j)=y_j,\\ 
&\tilde{f}^{\bar{\cN}}(\bar{d'})=d'^{2}.
\end{cases}
\end{equation}

\end{defn}
\begin{defn}(Local system associated to a multi-level $\bar{\cN}$)\label{localsystem}
Let us consider the local system that is given by the composition of the two morphisms from above:
\begin{equation}
\begin{aligned}
&\Phi^{\bar{\cN}}: \pi_1(\Clm) \rightarrow \Z^n \oplus \Z^{\bar{l}} \oplus \Z\\
&\hspace{26mm} \langle x_i \rangle \ \ \langle y_j\rangle \ \ \langle d' \rangle, \ i \in \{1,...,n\}, \ j \in \{1,...,\bl\}\\
&\Phi^{\bar{\cN}}= f^{\bar{\cN}} \circ \Phi. \ \ \ \ \ \ \ \ \ \ \ \ \ \ \ \ \ \ \ \ 
\end{aligned}
\end{equation}
\end{defn}

\begin{defn}[Covering space at multi-level $\bar{\cN}$]\label{localsystemc}
Let $\ClmN$ be the covering of the configuration space $\Clm$ which is associated to the level $\bar{\cN}$ local system $\Phi^{\bar{\cN}}$.
\end{defn}
\begin{notation}[Base point]

\

For the next steps we also fix a base point $\tilde{{\bf d}}$ which belongs to the fiber over ${\bf d}$ in $\ClmN$.
\end{notation}

\subsection{Passing from groups to group rings} We will work with the homologies of the level $\bar{\cN}$ covering space. More precisely, our tools will use a Poincaré-Lefschetz duality between two homology groups of $\ClmN$. In order to define this duality, we pass from groups to group rings of coefficients, as follows. 

We do this in two steps. First, we will enlarge the group associated to the image of the local system $\Phi^{\bar{\cN}}$. Secondly, we will use a change of variables which will play an important role for computations. 

We remark that the group of deck transformations of the covering space $\ClmN$ is:
$$\Imm\left(\Phi^{\bar{\cN}}\right)=(2\Z)^{n}\oplus \Z^{\bar{l}}\oplus (2\Z) \subseteq \Z^{n}\oplus \Z^{\bar{l}}\oplus \Z.$$
This means that the homology of this covering space $\ClmN$ is a module over the associated group ring:
$$\Z[x_1^{\pm 2},...,x_n^{\pm 2},y_1^{\pm 1}..., y_{\bar{l}}^{\pm 1}, d'^{\pm 2}].$$ 
\begin{defn}[Inclusion of group rings] Let us denote the following inclusion map:
$$\iota: \Z[x_1^{\pm 2},...,x_n^{\pm 2},y_1^{\pm 1}..., y_{\bar{l}}^{\pm 1}, d'^{\pm 4}] \subseteq \Z[x_1^{\pm 1},...,x_n^{\pm 1},y_1^{\pm 1}..., y_{\bar{l}}^{\pm 1}, d'^{\pm 1}].$$
\end{defn}

We consider the homology of the level $\bar{\cN}$ covering tensored over $\iota$ with the group ring $$\Z[x_1^{\pm 1},...,x_n^{\pm 1},y_1^{\pm 1}..., y_{\bar{l}}^{\pm 1}, d'^{\pm 1}].$$
Using this change of coefficients, we have homology groups which become modules over: 
$$\Z[x_1^{\pm 1},...,x_n^{\pm 1},y_1^{\pm 1}..., y_{\bar{l}}^{\pm 1}, d'^{\pm 1}].$$
For the next part, we want to change further the variables by multiplying with the $4^{th}$ root of unity  $\xi_2=i=e^{\frac{2\pi i}{4}}$. This will play an important role in the actual computations, which we explain below.
\begin{notation}[Variables used for computations encoding orientations]
Let us consider a new variable:
\begin{equation}
d:=\xi_2d'.
\end{equation}
 Accordingly, via the function $\tilde{f}^{\bar{\cN}}$, this notation corresponds to the operation given by rising to the $2^{nd}$ power, so this corresponds to the change of variables:
\begin{equation}
\bar{d}:=-\bar{d'}.
\end{equation} 
From now we will use this variable, having in mind certain computations of the intersection pairing. 
\end{notation}

{\bf Explanation} More specifically, our intersection pairing will be computed for the situation where the homology classes are given by submanifolds in the configuration space in the punctured disc. For the actual computation we will use a property that allows us to encode the sign of the geometric intersections in the configuration space by signs of intersections in the punctured disc, by replacing $\bar{d'}$ by the variable $-\bar{d'}$ (which we denote by $\bar{d}$, and we refer to \cite[Remark 3.4.3]{Crsym} and \cite[Section 3]{Big} for a precise explanation of the sign formula). Correspondingly, after passing from $\bar{d'}$ to $\bar{d}$ this means to replace $d'$ by $$d:=-id'$$
in the computations for the intersection pairing which we will do in the next sections. 
\begin{rmk}[Computing the intersection pairing--relative twisting]\label{rktw}

\

With these notations, when computing the intersection pairing for the two topological models from the next sections (see relations \eqref{d1}, \eqref{d1}) we will count $d^2$ for the contribution of the relative twisting.

\end{rmk}
This motivates the next change of variables that we introduce below. 
\begin{defn}[Change of variables used for orientation purposes] \label{groupring'} Let us consider:
\begin{equation} 
\begin{aligned}
&\eta: \Z[x_1^{\pm 1},...,x_n^{\pm 1},y_1^{\pm 1}..., y_{\bar{l}}^{\pm 1}, d'^{\pm 1}]\rightarrow \C[x_1^{\pm 1},...,x_n^{\pm 1},y_1^{\pm 1}..., y_{\bar{l}}^{\pm 1}, d^{\pm 1}]\\
&\begin{cases}
\eta(x_i)=x_i\\
\eta(y_j)=y_j\\
\eta(d')=id.
\end{cases}
\end{aligned}
\end{equation}
\end{defn}

\begin{rmk}[Structure of the homology of the level $\cN$ covering space]\label{homcov}
Using this change of coefficients, the homology groups of the level $\cN$ covering space $\ClmN$ become modules over the following ring:
\begin{equation} \label{groupring}
 \C[x_1^{\pm 1},...,x_n^{\pm 1},y_1^{\pm 1}..., y_{\bar{l}}^{\pm 1}, d^{\pm 1}].
\end{equation}
\end{rmk}

\subsection{Homology groups associated to a level $\cN$}\label{hlgy}
Our homological tools will use the relative homology of the level $\bar{\cN}$ covering space in the middle dimension. The definitions from this subsection rely on the construction of certain homology groups coming from a particular choice of splitting the boundary of coverings of configuration spaces from \cite{CrM}. However, as we will see in the next subsection, we will use a very concrete way of computing intersection pairings in these homologies, which we explain in detail in Subsection \ref{comp}. 

\begin{notation}
a) Let $S^{-}\subseteq \partial \mathscr D_{2n+1,\bl}$ be the semicircle on the boundary of the punctured disc containing the points with negative $x$-coordinate. We also fix a point on the boundary of the disc, denoted:
 $$w \in S^{-} \subseteq \partial \mathscr D_{2n+1,\bl}.$$
b) Let us denote by $C^{-}$ the subspace in the boundary of the configuration space $\Clm$  given by configurations where at least one point is in the set $S^{-}$. 

c) Also, let $P^{-} \subseteq \partial \ClmN$ be the part of the boundary of level $\bar{\cN}$-covering given by the fiber over $C^{-}$.

\end{notation}

In the next part, we consider two homology groups which will involve submodules in the Borel-Moore homology of the covering space $\ClmN$, given by the twisted Borel-Moore homology of the base space $\Clm$ (twisted by the local system $\Phi^{\bar{\cN}}$).

\begin{defn}
\label{T2}
For the definition of the two homology groups, we split the infinity part of the configuration space in two parts. The precise procedure for this construction is described in \cite[Remark 7.5]{CrM} (for the case of the $2$-dimensional disc minus half of its boundary and minus $2n+{\bar{l}}+1$ open discs with pairwise disjoint closures).

Using this splitting, we define two homology groups as follows.

\begin{equation*}
\begin{aligned}
& \bullet \text{ We consider the homology group } H^{\text{lf},\infty,-}_m(\ClmN,P^{-}; \Z) \text{ which is given}\\
& \text{by the homology relative to part of the infinity that is the open boundary of}\\
& \ClmN \text{ consisting in configurations that project to a multipoint in } \Clm\\
&  \text{ that touches a puncture from } \mathscr D_{2n+1,\bl} \text { and also relative to the boundary } P^{-}.\\
&\bullet \text{Also, let us define } H^{lf, \Delta}_{m}(\ClmN, \partial; \Z) \text{ to be the homolgy relative to }\\
& \text{ the boundary of the covering space which is not in } P^{-} \text{and Borel-Moore}\\
& \text{ with respect to collisions of particles from the configuration space}.
\end{aligned}
\end{equation*}
\end{defn}
\begin{rmk}
 In the next part we will see that the Borel-Moore homology of a covering space is different from the twisted Borel-Moore homology of the base space.
For our construction, we will use the homology groups of the covering space rather than the twisted homology of the base space. For this purpose, we use the following properties. 
\end{rmk}
\begin{prop}[\cite{CrM}, Theorem E]\label{P:5}
Let $\mathscr L_{\Phi^{\bar{\cN}}}$ be the rank $1$ local system associated to $\Phi^{\bar{\cN}}$ (as in \cite[Definition 2.7]{CrM}). Then there are natural injective maps between the following homologies:
\begin{equation}
\begin{aligned}
& \iota: H^{\text{lf},\infty,-}_m(\Clm, C^{-}; \mathscr L_{\Phi^{\bar{\cN}}})\rightarrow H^{\text{lf},\infty,-}_m(\ClmN,P^{-}; \Z)\\
& \iota^{\partial}:H^{\text{lf},\Delta}_m(\Clm, \partial; \mathscr L_{\Phi^{\bar{\cN}}})\rightarrow H^{lf, \Delta}_{m}(\ClmN, \partial; \Z).
\end{aligned}
\end{equation}
\end{prop}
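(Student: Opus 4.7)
The plan is to define $\iota$ and $\iota^{\partial}$ explicitly by lifting chains to the level $\bar{\cN}$ covering, to check compatibility with the imposed boundary/infinity conditions, and then to establish injectivity through a transfer-type argument that pushes a bounding chain in the cover back down to a twisted chain on the base.

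First, I would set up the lift at the chain level. A relative locally finite chain $c$ on $\Clm$ with coefficients in $\mathscr L_{\Phi^{\bar{\cN}}}$ can be written, after choosing representatives, as a sum $\sum_{\alpha} \sigma_\alpha \otimes g_\alpha$, where each $\sigma_\alpha$ is a singular simplex in $\Clm$ and $g_\alpha$ lies in the deck group $\Imm(\Phi^{\bar{\cN}}) = (2\Z)^n \oplus \Z^{\bar l} \oplus (4\Z)$ recording the local system label. Using the fixed basepoint lift $\tilde{\mathbf d}$, each $\sigma_\alpha$ (connected to $\mathbf d$ by a chosen path) admits a unique lift $\tilde\sigma_\alpha$ to $\ClmN$ starting at $\tilde{\mathbf d}$, and translating by $g_\alpha$ yields an integral chain $\iota(c) := \sum_\alpha g_\alpha \cdot \tilde\sigma_\alpha$. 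A standard check confirms $\partial \iota(c) = \iota(\partial c)$, so $\iota$ descends to homology; the map $\iota^{\partial}$ is defined by the same recipe after exchanging the roles of the infinity stratum with the diagonal and of $P^{-}$ with its complementary boundary component.

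Next I would verify that the boundary data are preserved. Since $p\colon \ClmN \to \Clm$ is an unramified covering, one has $p^{-1}(C^{-}) = P^{-}$, and similarly the preimages of the punctures-at-infinity and of the diagonal stratum agree tautologically with the corresponding loci in $\ClmN$. Hence chains which vanish near $C^{-}$ (respectively, near the deleted puncture locus $\Delta$) lift to chains vanishing near $P^{-}$ (respectively, near its counterpart), so the two lift maps are well defined on the respective relative/locally finite complexes.

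The main obstacle is injectivity in the locally finite setting. Suppose $[c]$ lies in the kernel of $\iota$, so $\iota(c) = \partial \tilde b$ for some locally finite $\tilde b$ in $\ClmN$ with the appropriate support conditions. The idea is to push $\tilde b$ down by attaching to each simplex $\tilde\tau$ its uniquely determined deck-transformation label relative to $\tilde{\mathbf d}$; that label is precisely the coefficient required by $\mathscr L_{\Phi^{\bar{\cN}}}$, producing a candidate twisted chain $b$ with $\partial b = c$. The delicate point is that this transfer must preserve local finiteness and the support condition, and this is exactly the role of the splitting of the boundary of $\ClmN$ constructed in \cite[Remark 7.5]{CrM}: that splitting is arranged so that, over any compact region of $\Clm$, only finitely many fiber translates contribute to $\tilde b$, ensuring $p_*\tilde b$ is locally finite. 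Modulo this input, the argument is formally the standard Shapiro-type identification of twisted base homology with equivariant cover homology, and both $\iota$ and $\iota^{\partial}$ come out injective by the same transfer.
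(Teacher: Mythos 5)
First, note that the paper does not prove this proposition: it is quoted verbatim from \cite{CrM} (Theorem E), so there is no internal proof to compare against. Your construction of the maps $\iota$ and $\iota^{\partial}$ at the chain level is fine: a locally finite twisted chain is, by definition, a locally finite sum $\sum_\alpha \sigma_\alpha \otimes g_\alpha$ with each coefficient a \emph{finite} element of the group ring of the deck group, and lifting produces a locally finite integral chain in $\ClmN$ compatible with the boundary strata, since $p^{-1}(C^-)=P^-$ and the covering is trivial near the deleted loci. This identifies $C^{lf}_*(\Clm;\mathscr L_{\Phi^{\bar{\cN}}})$ with the subcomplex of $C^{lf}_*(\ClmN;\Z)$ of chains that are \emph{fiberwise finitely supported} over each compact set of the base.

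The genuine gap is in your injectivity step. If $\iota(c)=\partial\tilde b$, the chain $\tilde b$ is only required to be locally finite in $\ClmN$, and since the fiber of the covering over a compact $K\subseteq\Clm$ is an infinite discrete union of copies of $K$, a locally finite $\tilde b$ may meet infinitely many sheets over $K$. Such a $\tilde b$ has no pushdown to a twisted chain: the would-be coefficient at a simplex of the base is an infinite sum in $\Z[\Imm(\Phi^{\bar{\cN}})]$. This is precisely why the paper stresses, right after stating the proposition, that the Borel--Moore homology of the covering is \emph{different} from the twisted Borel--Moore homology of the base; if your transfer worked, the two would coincide and the statement would be an isomorphism rather than an injection. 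Your appeal to the boundary splitting of \cite[Remark 7.5]{CrM} does not repair this: that splitting governs which boundary strata one is relative to, not the fiberwise support of bounding chains, and asserting that ``only finitely many fiber translates contribute'' is exactly the unproved content. The argument in \cite{CrM} instead proceeds by exhibiting explicit free bases (multiforks and their duals) for the twisted homology of the base and pairing them nondegenerately against dual classes via the intersection form, which factors through the homology of the covering; injectivity then follows from nondegeneracy of the pairing rather than from any Shapiro-type transfer. As written, your proof establishes well-definedness of $\iota$ and $\iota^{\partial}$ but not their injectivity.
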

We will use the images of the twisted homologies of the base configuration space from Proposition \ref{P:5}, seen in the homologies of the level $\bar{\cN}$ covering $\ClmN$. 
\begin{defn}[Homology of the level $\bar{\cN}$ covering]

We denote the submodules in the homologies of the level $\bar{\cN}$ covering space that are the images of the inclusions $\iota$ and $\iota^{\partial}$ respectively:
\begin{enumerate}
 \item[$\bullet$]  $\mathscr H^{\bar{\cN}}_{n,m,{\bar{l}}}\subseteq H^{\text{lf},\infty,-}_m(\ClmN, P^{-1};\Z)$ and 
 \item[$\bullet$]  $\mathscr H^{\bar{\cN},\partial}_{n,m,{\bar{l}}} \subseteq H^{\text{lf},\Delta}_m(\ClmN,\partial;\Z)$. 
\end{enumerate}
As we have seen in \eqref{groupring}, all these homologies are $\C[x_1^{\pm 1},...,x_n^{\pm 1},y_1^{\pm 1}..., y_{\bar{l}}^{\pm 1}, d^{\pm 1}]$-modules.
\end{defn}

These two homologies are related by a geometric intersection pairing that comes from a  Poincaré-Lefschetz type duality for twisted homology (see \cite[Proposition 3.2]{CrM}] and \cite[Lemma 3.3]{CrM}).
\begin{prop}(\cite[Proposition 7.6]{CrM})\label{P:3'''}
There is a topological intersection pairing between the following homology groups:
$$\ll ~,~ \gg: \mathscr H^{\bar{\cN}}_{n,m,{\bar{l}}} \otimes \mathscr H^{\bar{\cN},\partial}_{n,m,{\bar{l}}} \rightarrow\C[x_1^{\pm 1},...,x_n^{\pm 1},y_1^{\pm 1}..., y_{\bar{l}}^{\pm 1}, d^{\pm 1}].$$
\end{prop}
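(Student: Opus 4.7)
The plan is to derive this pairing from the classical Poincaré--Lefschetz duality for the base configuration space $\Clm$ with twisted coefficients in the local system $\mathscr L_{\Phi^{\bar{\cN}}}$, and then transport it through the inclusions $\iota$ and $\iota^{\partial}$ coming from Proposition \ref{P:5}.

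First, I would set up the duality at the base level. The configuration space $\Clm = \Conf_{m}(\mathscr D_{2n+1,\bl})$ is an open orientable $2m$-manifold whose frontier has been explicitly split in two pieces (\cite[Remark 7.5]{CrM}): on one hand the part of $\partial \ClmN$ that projects to $C^{-}$ (the configurations touching $S^{-}$), and on the other hand the remaining boundary together with the diagonal stratum (where particles collide or hit punctures). Classical Poincaré--Lefschetz duality, applied to this splitting and twisted by the rank $1$ local system $\mathscr L_{\Phi^{\bar{\cN}}}$ (whose monodromy group ring is the group ring of $\mathrm{Im}(\Phi^{\bar{\cN}})$, namely $(2\Z)^n \oplus \Z^{\bl} \oplus (4\Z)$), produces a nondegenerate intersection pairing
\begin{equation*}
 H^{\text{lf},\infty,-}_m(\Clm, C^{-}; \mathscr L_{\Phi^{\bar{\cN}}}) \otimes H^{\text{lf},\Delta}_m(\Clm, \partial; \mathscr L_{\Phi^{\bar{\cN}}}) \longrightarrow \Z[\mathrm{Im}(\Phi^{\bar{\cN}})].
\end{equation*}
This is the duality machinery already developed in \cite[Proposition 3.2 and Lemma 3.3]{CrM}; the splitting of the infinity part is precisely engineered so that Borel--Moore cycles on one side pair geometrically against relative cycles on the other.

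Next I would transport this pairing to the covering. By Proposition \ref{P:5}, the inclusions $\iota$ and $\iota^{\partial}$ embed these twisted homologies of the base as the submodules $\mathscr H^{\bar{\cN}}_{n,m,\bl}$ and $\mathscr H^{\bar{\cN},\partial}_{n,m,\bl}$ of the corresponding homologies of $\ClmN$. The pairing above is then defined on these submodules simply by pulling back through $\iota$ and $\iota^{\partial}$; concretely, for classes represented by (lifts of) submanifolds transverse in the base, the pairing counts intersection points graded by the monodromy of $\Phi^{\bar{\cN}}$ along the chosen paths to the base point $\tilde{\mathbf d}$.

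The remaining step is a change of coefficients. The output currently lives in the group ring of $(2\Z)^n \oplus \Z^{\bl} \oplus (4\Z)$; extending via the inclusion
\begin{equation*}
 \iota \colon \Z[x_1^{\pm 2},\ldots,x_n^{\pm 2},y_1^{\pm 1},\ldots,y_{\bar{l}}^{\pm 1},d'^{\pm 4}] \hookrightarrow \Z[x_1^{\pm 1},\ldots,x_n^{\pm 1},y_1^{\pm 1},\ldots,y_{\bar{l}}^{\pm 1},d'^{\pm 1}]
\end{equation*}
and then applying the sign-correcting substitution $\eta$ of Definition \ref{groupring'} (which replaces $d'$ by $\xi_{2}^{-1}d$ and absorbs the orientation signs of intersections in the punctured disc, as explained in \cite[Remark 3.4.3]{Crsym}), we land in
\begin{equation*}
 \C[x_1^{\pm 1},\ldots,x_n^{\pm 1},y_1^{\pm 1},\ldots,y_{\bar{l}}^{\pm 1},d^{\pm 1}],
\end{equation*}
which is the required target ring.

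The main obstacle I would expect is verifying that the geometric pairing defined via intersection of lifts in $\ClmN$ coincides, on the submodules $\mathscr H^{\bar{\cN}}_{n,m,\bl}$ and $\mathscr H^{\bar{\cN},\partial}_{n,m,\bl}$, with the algebraic pairing obtained from twisted Poincaré--Lefschetz duality on the base: this requires checking that the boundary splitting $(P^{-}, \partial \setminus P^{-})$ upstairs pulls back to exactly the splitting $(C^{-}, \partial \setminus C^{-})$ downstairs, that collisions of particles correspond on both sides, and that the monodromy weights match the change of variables. Once this compatibility is in place the nondegeneracy and bilinearity of the pairing follow directly from the base-level duality, and the statement is proved.
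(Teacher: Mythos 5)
Your proposal matches the paper's approach: the statement is imported from \cite[Proposition 7.6]{CrM}, and the paper's surrounding discussion attributes it to exactly the mechanism you describe --- twisted Poincar\'e--Lefschetz duality on the base (\cite[Proposition 3.2, Lemma 3.3]{CrM}), transported to the covering via the injections $\iota$, $\iota^{\partial}$ of Proposition \ref{P:5}, followed by the group-ring extension and the change of variables $\eta$. The only caveat is that you assert nondegeneracy, which the proposition does not claim and which is not needed; otherwise this is the intended argument.
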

\subsection{Computation of the geometric intersection pairing}\label{comp}
We will need to perform precise computations of this intersection pairing, so in this subsection we sketch the main steps needed for its formula (that is described in \cite[Section 7]{CrM}). We will work with the coefficients that belong to the group ring from equation \eqref{groupring}. For this, we introduce the following notation.
\begin{notation}[Passing to the group ring and encode orientations]\phantom{A}\\ 
Let $\tilde{\Phi}^{\bar{\cN}}$ be the morphism induced by the level $\bar{\cN}$ local system $\Phi^{\bar{\cN}}$, that takes values in the group ring of $\Z^n \oplus \Z^{\bar{l}} \oplus \Z$:
\begin{equation}\label{sgr}
\tilde{\Phi}^{\bar{\cN}}: \pi_1(\Clm) \rightarrow \C[x_1^{\pm 1},...,x_n^{\pm 1},y_1^{\pm 1}..., y_{\bar{l}}^{\pm 1}, d'^{\pm 1}].
\end{equation}
Then, using the change of variables $\eta$, we consider:
\begin{equation}\label{sor}
\begin{aligned}
&\bar{\Phi}^{\bar{\cN}}: \pi_1(\Clm) \rightarrow \C[x_1^{\pm 1},...,x_n^{\pm 1},y_1^{\pm 1}..., y_{\bar{l}}^{\pm 1}, d^{\pm 1}]\\
&\bar{\Phi}^{\bar{\cN}}=\eta \circ \tilde{\Phi}^{\bar{\cN}}. \ \ \ \ \ \ \ \ \ \ \ \ \ \ \ \ \ \ \ \ 
\end{aligned}
\end{equation}
\end{notation}
In the next part we present the formula for the intersection pairing, which will use this morphism. 
Let $H_1 \in H^{\bar{\cN}}_{n,m,{\bar{l}}}$ and $H_2 \in H^{\bar{\cN},\partial}_{n,m,{\bar{l}}}$ be two homology classes. Moreover, let us suppose that these two classes are given by two lifts $\tilde{X}_1, \tilde{X}_2$ of immersed submanifolds in the base space, denoted $X_1,X_2 \subseteq \Clm$. We assume that $X_1$ and $X_2$ intersect transversely, in a finite number of points. 
  
The intersection pairing will be computed following two steps. 

{\bf 1) Loop associated to an intersection point} First, we associate to each intersection point $x \in X_1 \cap X_2$ a loop in the configuration space $l_x \subseteq \Clm$. After that, we will grade this via the local system and the morphism $\bar{\Phi}^{\bar{\cN}}$.
  
\begin{defn}[Construction of $l_x$]
For $x \in X_1 \cap X_2$, we suppose that we have the paths $\gamma_{X_1}, \gamma_{X_2}$ starting in $\bf d$, ending on $X_1$,$X_2$ respectively such that  
$\tilde{\gamma}_{X_1}(1) \in \tilde{X}_1$ and $ \tilde{\gamma}_{X_2}(1) \in \tilde{X}_2$. Here, $\tilde{\gamma}_{X_i}$ is the unique lift of ${\gamma}_{X_i}$ through $\tilde{\bf d}$.
Let us choose $\nu_{X_1}, \nu_{X_2}:[0,1]\rightarrow \Clm$ two paths with the properties:
\begin{equation}
\begin{cases}
Im(\nu_{X_1})\subseteq X_1; \nu_{X_1}(0)=\gamma_{X_1}(1);  \nu_{X_1}(1)=x\\
Im(\nu_{X_2})\subseteq X_2; \nu_{X_2}(0)=\gamma_{X_2}(1);  \nu_{x_2}(1)=x.
\end{cases}
\end{equation}
The composition of these paths gives us the loop:
$$l_x=\gamma_{X_1}\circ\nu_{X_1}\circ \nu_{X_2}^{-1}\circ \gamma_{X_2}^{-1}.$$
\end{defn}
{\bf 2) Grade the family of loops using the local system}
\begin{prop}[Intersection pairing from graded intersections in the base space]\label{P:3}
The intersection pairing can be computed from the set of loops $l_x$ and the local system:
\begin{equation}\label{eq:1}  
\ll H_1,H_2 \gg= \eta \left( \sum_{x \in X_1 \cap X_2}  \alpha_x \cdot \Phi^{\bar{\cN}}(l_x) \right) \in \C[x_1^{\pm 1},...,x_n^{\pm 1},y_1^{\pm 1}..., y_{\bar{l}}^{\pm 1}, d^{\pm 1}]
\end{equation}
where $\alpha_x$ is the sign of the geometric intersection between the submanifolds $M_1$ and $M_2$ at the point $x$, in the configuration space $\Clm$.

\end{prop}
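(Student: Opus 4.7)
The plan is to compute the intersection pairing directly at the level of the covering space $\ClmN$, using transversality of the lifts $\tilde{X}_1$ and $\tilde{X}_2$, and then identify the contribution of each base intersection point $x\in X_1\cap X_2$ with the monodromy of the local system $\Phi^{\bar{\cN}}$ along the loop $l_x$. The Poincaré–Lefschetz pairing of Proposition~\ref{P:3'''} is defined by the geometric count of transverse intersections in the covering, with values in the group ring of the deck group (Remark~\ref{homcov}). So my first step is to use transversality of $X_1$ and $X_2$ in the base to deduce transversality of the chosen lifts $\tilde{X}_1$ and $\tilde{X}_2$: any intersection of these lifts projects to some $x\in X_1\cap X_2$, and conversely the preimage of $x$ in the covering space splits into orbits under the deck group action.

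Second, I fix $x\in X_1\cap X_2$ and determine exactly which of the deck translates of $\tilde{X}_2$ meets $\tilde{X}_1$ above $x$. The sub-path $\nu_{X_1}$ lifts to a path inside $\tilde{X}_1$ starting at $\tilde{\gamma}_{X_1}(1)$ and ending at some preimage $\tilde{x}_1$ of $x$; similarly $\nu_{X_2}$ lifts to a path inside $\tilde{X}_2$ ending at a preimage $\tilde{x}_2$. The unique deck transformation $g_x$ with $g_x\cdot \tilde{x}_2=\tilde{x}_1$ is exactly the monodromy of $\Phi^{\bar{\cN}}$ along the loop
\[
l_x=\gamma_{X_1}\circ\nu_{X_1}\circ\nu_{X_2}^{-1}\circ\gamma_{X_2}^{-1},
\]
by definition of the associated covering. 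Consequently, the only deck translate of $\tilde{X}_2$ meeting $\tilde{X}_1$ in a point lying above $x$ is $g_x\cdot \tilde{X}_2$, and this intersection consists of a single point.

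Third, I handle signs. Since $\ClmN\to \Clm$ is a local diffeomorphism, transverse intersection signs are preserved by the projection, so the local sign at the covering intersection equals $\alpha_x$. Summing over $x\in X_1\cap X_2$ gives, in the group ring of the deck group,
\[
\ll H_1,H_2\gg \;=\; \sum_{x\in X_1\cap X_2}\alpha_x \cdot \tilde{\Phi}^{\bar{\cN}}(l_x),
\]
with $\tilde{\Phi}^{\bar{\cN}}$ as in \eqref{sgr}. Applying the orientation change of variables $\eta$ of Definition~\ref{groupring'} converts this into the formula stated in \eqref{eq:1} with $\bar{\Phi}^{\bar{\cN}}=\eta\circ\tilde{\Phi}^{\bar{\cN}}$, and explains why the variable $d=\xi_2 d'$ appears in the target: the factor $\xi_2$ is precisely the sign correction incorporated into the deck variable, as noted after the definition of $d$.

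The main subtlety I expect is not the combinatorial count but the compatibility of the identifications. The covering $\ClmN$ is associated to the refined local system $\Phi^{\bar{\cN}}=f^{\bar{\cN}}\circ\Phi$, and $H_1,H_2$ live in the submodules $\mathscr{H}^{\bar{\cN}}_{n,m,\bar{l}}$, $\mathscr{H}^{\bar{\cN},\partial}_{n,m,\bar{l}}$ arising as images of the twisted homology of the base under the injections $\iota,\iota^{\partial}$ of Proposition~\ref{P:5}. One has to check that the pairing defined via transverse intersection of lifts in the covering coincides, after the identifications of Proposition~\ref{P:5} and the coefficient change $\eta$, with the twisted intersection pairing of \cite[Proposition 7.6]{CrM}. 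Once this compatibility is in hand, the formula follows from the local computation above.
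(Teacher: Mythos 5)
Your proposal is correct and is the standard lift-and-monodromy computation of a twisted intersection pairing; note that the paper itself states Proposition~\ref{P:3} without proof, deferring to \cite[Section 7]{CrM}, and the two-step sketch preceding the statement (associate a loop $l_x$ to each intersection point, then grade by the local system) is exactly what you flesh out. Two minor caveats. First, the classes are allowed to come from \emph{immersed} submanifolds, so a base point $x$ may lie under several branches of $X_1$ or $X_2$; strictly one sums over pairs of branches, though this does not change the shape of the argument. Second, in the Proposition as stated the sign $\alpha_x$ is already the configuration-space intersection sign, so $\eta$ acts purely as the change of variables $d'\mapsto \xi_2^{-1}d$ with no sign-theoretic content here; the interpretation of $\xi_2$ as a sign correction belongs to Remark~\ref{orientd} (where one trades $\alpha_x$ for products of local orientations in the disc), and your closing sentence conflates the two slightly.
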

\begin{rmk}[Submanifolds coming from products of manifolds of dimension one] \label{orientd}

If the homology classes come from products of one dimensional submanifolds in the punctured disc, quotiented to the unordered configuration space, we can compute directly the intersection $\ll H_1,H_2 \gg$ from the sum presented in equation \eqref{eq:1} without the change of coefficients given by $\eta$.

In this situation we can replace the local system $\Phi^{\bar{\cN}}$ by $\bar{\Phi}^{\bar{\cN}}$ in the previous formula, counting the sign contribution given just by the product of local orientations in the disc around each component of the intersection point $x$ (instead of computing the orientation in the configuration space given by $\alpha_x$).

We will use this type of computations in Section \ref{S:J} and Section \ref{S:A}.
\end{rmk}

\subsection{Specialisations given by colorings}
So far, the construction of the two homology groups is intrinsec and does not depend on a braid representative that gives a link. In this part we will prepare our homological tools for the situation where we have a braid with $n$-strands that gives a link with $l$ components by braid closure. This will induce a colouring, as below. 
\begin{defn}[Colouring of the punctures $C$]
Let $C$ be a coloring of the $2n$ $p$-punctures of the disc $\{1,...,2n\}$ by $l$ colours, as below:
\begin{equation}
C:\{1,...,2n\}\rightarrow \{1,...,l\}.
\end{equation} 
\end{defn}

There are two special situations that occur in our models, namely ${\bar{l}}=1$ or ${\bar{l}}=l$. In this paper, we have the case ${\bar{l}}=1$. In the sequel paper for $3$-manifold invariants we will use ${\bar{l}}=l$. 

We fix the last $p$-puncture and denote it by $\bar{p}=2n$. We will position the $\bar{l}=1$ $s$-puncture underneath this special $p$-puncture labeled by $\bar{p}$. Using this, we define the colour of the $s$-punctures as being the same as the colour of $\bar{p}$ (which is induced by the colouring $C$).


\begin{defn}[Change of coefficients $f_{C}$]
We change the variables associated to the punctures of the punctured disc using the colouring $C$. More specifically, let us change the first $n+{\bar{l}}$ variables $x_1,...,x_n,y_1,...,y_{\bar{l}}$ from the ring $\C[x_1^{\pm 1},...,x_n^{\pm 1},y_1^{\pm 1}..., y_{\bar{l}}^{\pm 1}, d^{\pm 1}]$ to $l+{\bar{l}}$ variables, which we denote by $x_1,..,x_l,y_1,...,y_{\bar{l}}$,  as below:
$$ f_C: \C[x_1^{\pm 1},...,x_n^{\pm 1},y_1^{\pm 1}..., y_{\bar{l}}^{\pm 1}, d^{\pm 1}] \rightarrow \C[x_1^{\pm 1},...,x_l^{\pm 1},y_1^{\pm 1}..., y_{\bar{l}}^{\pm 1}, d^{\pm 1}]$$
\begin{equation}\label{eq:8} 
\begin{cases}
&f_C(x_i)=x_{C(i)}, \ i\in \{1,...,n\}\\
&f_C(y_j)=y_{C(\bar{p}_j)}, \ j\in \{1,...,{\bar{l}}\}.
\end{cases}
\end{equation}
\end{defn}
Now, we look at this change of coefficients at the level of the homology groups, via the function $f_C$.
\begin{defn}(Homology groups)\label{D:4} We consider the two homologies over the ring associated to the new coefficients:
\begin{enumerate}
 \item[$\bullet$]  $H^{\bar{\cN}}_{n,m,{\bar{l}}}:=\mathscr H^{\bar{\cN}}_{n,m,{\bar{l}}}|_{f_C}$ 
 \item[$\bullet$]  $H^{\bar{\cN},\partial}_{n,m,{\bar{l}}}:=\mathscr H^{\bar{\cN},\partial}_{n,m,{\bar{l}}}|_{f_C}.$
\end{enumerate}
We remark that these homology groups are modules over $\C[x_1^{\pm 1},...,x_l^{\pm 1},y_1^{\pm 1}..., y_{\bar{l}}^{\pm 1}, d^{\pm 1}]$.
\end{defn}
\begin{notation}[Cases when we remove $\bar{l}$ from notations]
Since for the models for link invariants we have $\bar{l}=1$, we remove this from the notation of the homology groups, and also replace $y_1$ by $y$ as variable, and use the following notations.
\end{notation}
\begin{notation}[Homology groups for link invariants] 
We have defined above the homologies of the level $\bar{\cN}$ covering $\ClmN$, which we denote by:
$$H^{\bar{\cN}}_{n,m}, H^{\bar{\cN}, \partial}_{n,m}$$ and which are modules over the ring
$\C[x_1^{\pm 1},...,x_l^{\pm 1},y^{\pm 1}, d^{\pm 1}]$.
\end{notation}
\clearpage
\subsection{Diagram with the specialisations of coefficients}\phantom{A}\\
As a summary, we have the following specialisations of coefficients, from Figure \ref{diags}.
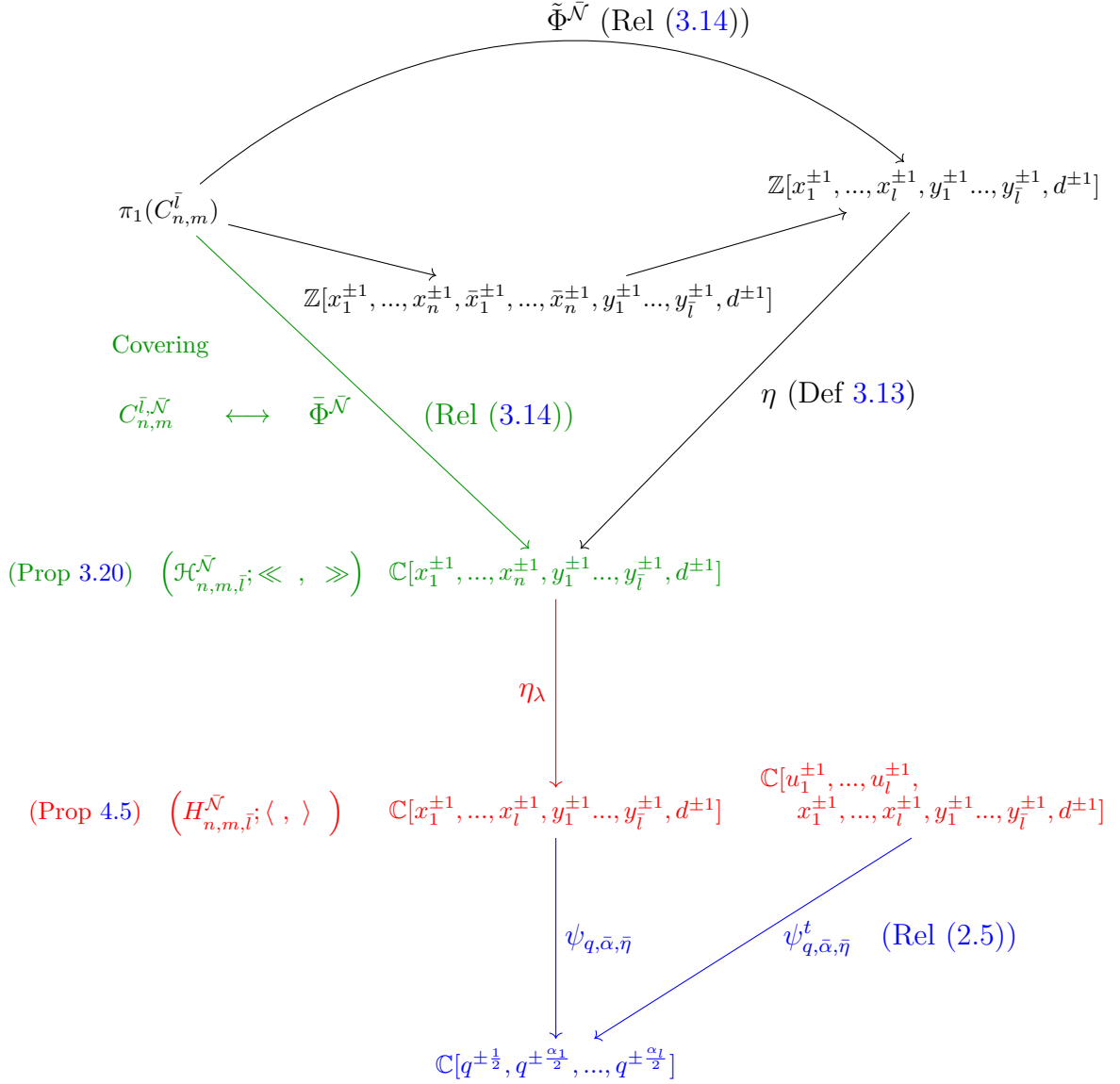
\begin{figure}[H] 
\begin{center}
\begin{tikzpicture}
[x=1.2mm,y=1.6mm]

\node (b1)               at (5,50)    {$\pi_1(\Clm)$};
\node (b20) [color=dgreen] at (4,38)   {Covering};
\node (b200) [color=dgreen] at (8,32)   {$\ClmN$ $ \ \ \ \ \ \longleftrightarrow$};
\node (b2) [color=dgreen] at (50,18)   {$\C[x_1^{\pm 1},...,x_n^{\pm 1},y_1^{\pm 1}..., y_{\bar{l}}^{\pm 1}, d^{\pm 1}]$};
\node (b22) [color=dgreen] at (7,18)   {(Prop \ref{P:3'''}) \ \ $\left(\mathscr H^{\bar{\cN}}_{n,m,{\bar{l}}}; \ll ~,~ \gg  \right)$};
\node (b3) [color=red]   at (50,-3)   {$\C[x_1^{\pm 1},...,x_l^{\pm 1},y_1^{\pm 1}..., y_{\bar{l}}^{\pm 1}, d^{\pm 1}]$};
\node (b33) [color=red] at (7,-3)   {(Prop \ref{P:3'}) \ \ $\left( H^{\bar{\cN}}_{n,m,{\bar{l}}}; \left\langle ~,~ \right\rangle \ \ \right)$};
\node (b3') [color=red]   at (83,0)   {$\C[u_1^{\pm 1},...,u_l^{\pm 1},$};
\node (b33') [color=red]   at (96,-3)   {$x_1^{\pm 1},...,x_l^{\pm 1},y_1^{\pm 1}..., y_{\bar{l}}^{\pm 1}, d^{\pm 1}]$};
\node (b4) [color=blue]  at (50,-25)   {$\C[q^{\pm \frac{1}{2}},q^{\pm \frac{\alpha_1}{2}},...,q^{\pm \frac{\alpha_l}{2}}]$};
\node (b1') [color=black]  at (48,42)   {$\Z[x_1^{\pm 1},...,x_n^{\pm 1},\bar{x}_1^{\pm 1},...,\bar{x}_n^{\pm 1},y_1^{\pm 1}..., y_{\bar{l}}^{\pm 1}, d^{\pm 1}]$};
\node (b1'') [color=black]  at (94,52)   {$\Z[x_1^{\pm 1},...,x_l^{\pm 1},y_1^{\pm 1}..., y_{\bar{l}}^{\pm 1}, d^{\pm 1}]$};

\draw[->,color=dgreen]   (b1)      to node[yshift=-3mm,xshift=11mm,font=\large]{$\bar{\Phi}^{\bar{\cN}} \hspace{10mm} (\text{Rel } \eqref{sgr})$}                           (b2);
\draw[->,color=red]             (b2)      to node[left,font=\large]{$\eta_{\lambda} $}   (b3);
\draw[->,color=black]             (b1')      to node[left,font=\large]{}   (b1'');
\draw[->,color=blue]             (b3)      to node[right,font=\large]{$\psi_{q,\bar{\alpha},\bar{\eta}}$}   (b4);
\draw[->,color=blue]             (b33')      to node[right,font=\large,xshift=3mm]{$\psi^t_{q,\bar{\alpha},\bar{\eta}}$\ \ \ (Rel \eqref{not})}   (b4);
\draw[->,color=black]   (b1)      to node[right,xshift=-2mm,yshift=3mm,font=\large]{}                        (b1');
\draw[->,color=black]   (b1)      to [out=40,in=140] node[right,xshift=-2mm,yshift=3mm,font=\large]{$\tilde{\Phi}^{\bar{\cN}}$ (Rel \eqref{sgr})}                        (b1'');
\draw[->,color=black]   (b1'')      to node[right,yshift=-2mm,xshift=1mm,font=\large]{$\eta$ (Def \ref{groupring'})}                        (b2);
\end{tikzpicture}
\end{center}
\vspace{-3mm}
\caption{\normalsize Specialisations of coefficients: General set-up}\label{diags}
\end{figure}

\subsection{General procedure for encoding homology classes in the base configuration space}
\begin{notation}[Geometric supports and paths to the base points]\label{paths}

\

In all constructions from now on we will use homology classes in a covering of the configuration space that are prescribed by the following data:

\begin{itemize}
\item[•] A {\em geometric support}, meaning a {\em set of arcs in the punctured disc} or a set of {\em circles in the punctured disc} on which we consider unordered configurations of a prescribed number of particles. The image of the product of these arcs or configurations on circles in the configuration space, gives us a submanifold $F$ which has half of the dimension of the configuration space. 
\item[•] A collection of {\em paths connected to the base point}, starting in the base points from the punctured disc and ending on these curves or circles. Then, the collection of these paths gives a path in the configuration space, starting in $\bf d$ and ending on the submanifold $F$. 
\end{itemize}

\

\

Let us suppose that the submanifold $F$ has a well defined lift to the covering.
Then, we lift the path to a path in the covering space, starting from $\tilde{\bf{d}}$ and after that we lift the submanifold through the end point of this path. The detailed construction of homology classes via this dictionary is presented in \cite[Section 5]{Crsym}. 
\end{notation}
We will see explicit examples of such classes in the next sections.

\subsection{Lift of submanifolds with support on configuration spaces on circles}
\label{S:4}

In this part we show that if we consider a submanifold which has as geometric support a collection of configuration spaces on symmetric circles, whose multiplicities are prescribed by the multi-level $\bar{\cN}$, then this submanifold has a well-defined lift in the covering associated to the level $\bar{\cN}$ local system.
More specifically, let us fix the following parameters:
$$ n \rightarrow n; \ \ \ m\rightarrow 1+\sum_{i=1}^{n} \cN_i; \ \ \ {\bar{l}}; \ \ \ \bar{\cN}.$$ 
This means that we work in the configuration space of $1+\sum_{i=1}^{n} \cN_i$ particles in the $(2n+2)$-punctured disc:
$$\Conf_{1+\sum_{i=1}^{n} \cN_i}\left(\mathbb D_{2n+2}\right)$$
After this, let us consider the local system $\Phi^{\bar{\cN}}$ associated to the these parameters.

We will use the homology groups associated to this data:
$$\mathscr H^{\bar{\cN}}_{n, 1+\sum_{i=1}^{n} \cN_i,\bar{l}} \ \ \ \ \ \ \ \ \ \ \ \ \ \ \ \text{ and }\ \ \ \ \ \ \ \mathscr H^{\bar{\cN},\partial}_{n,1+\sum_{i=1}^{n} \cN_i,\bar{l}}.$$
\begin{rmk} The following property holds for any $\bar{l}$, even if for the later sections we use $\bar{l}=1$.
\end{rmk}
\begin{defn} (Level $\bar{\cN}$ submanifold based on ovals)\label{gendsupp}\\
Let $L(\bar{\cN})$ be the submanifold in the configuration space given by the product of configuration spaces on symmetric ovals, with multiplicities as in Figure \ref{Pictureci} and the circle which goes around the puncture labeled by $0$:
 \begin{figure}[H]
\centering
$$\normalsize \tilde{L}(\bar{\cN})$$
$$\hspace{5mm}\downarrow \text{ lifts }$$
\vspace{-2mm}

\includegraphics[scale=0.5]{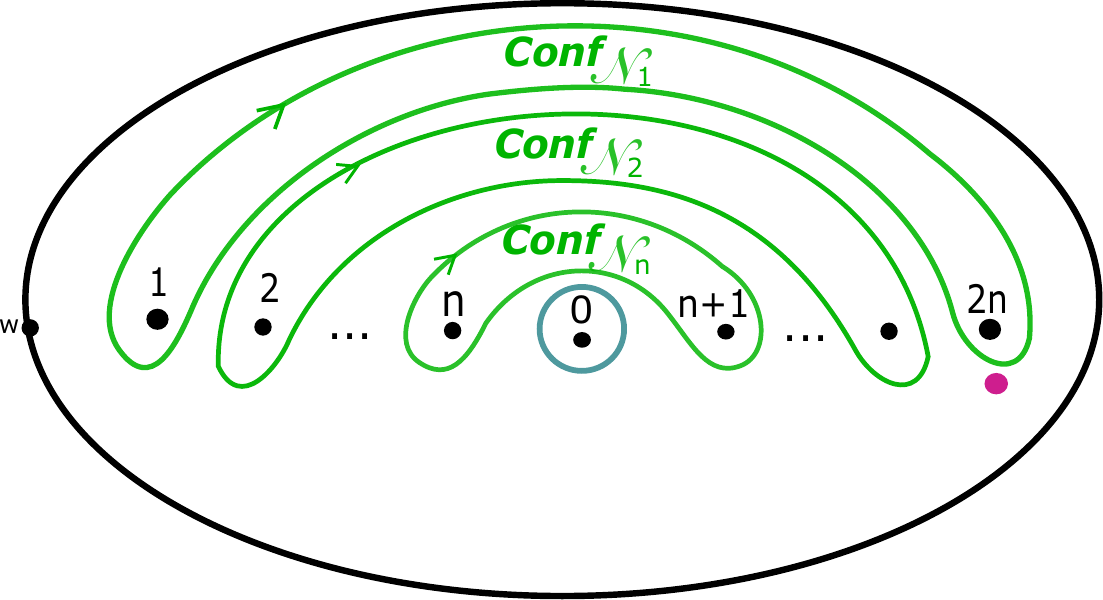}
\vspace*{-20mm}

$$\normalsize L(\bar{\cN})$$
\vspace*{1mm}

\vspace*{-8mm}

$\hspace{115mm} \Conf_{1+\sum_{i=1}^{n} \cN_i}\left(\mathbb D_{2n+2}\right)$

\vspace*{1mm}

\caption{\normalsize Lifting Lagrangians supported by configurations on ovals}\label{Pictureci}
\end{figure}
\end{defn}

\begin{lem}[Lifting submanifolds given by configurations on ovals]\label{gendsuppl} The geometric support based on ovals with multiplicities given by the level $\bar{\cN}$, shown in the Figure \ref{Pictureci}, leads to a well-defined class in the homology $\mathscr H^{\bar{\cN},\partial}_{n,1+\sum_{i=1}^{n} \cN_i,\bar{l}}$ of the level $\bar{\cN}$-covering space $\ClmN$.

\end{lem}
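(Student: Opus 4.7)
The approach is the standard lifting criterion for covering spaces: the inclusion $L(\bar{\cN}) \hookrightarrow \Clm$ lifts to the level-$\bar{\cN}$ covering $\ClmN$ if and only if $\Phi^{\bar{\cN}}$ vanishes on the image of $\pi_1(L(\bar{\cN}))\to\pi_1(\Clm)$. So I would reduce the problem to verifying this vanishing on a convenient set of generators of $\pi_1(L(\bar{\cN}))$.

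First I would identify those generators. Since $L(\bar{\cN})$ is the embedded image of the product $\prod_{i=0}^{n}\mathrm{UConf}_{\cN_i}(O_i)$ (with $O_0$ the small circle around the $q$-puncture carrying one particle, and $O_1,\dots,O_n$ the symmetric ovals carrying $\cN_1,\dots,\cN_n$ particles), its fundamental group is the product of the $\pi_1(\mathrm{UConf}_{\cN_i}(O_i))$'s. For each oval $O_i$ with $\cN_i\geqslant 2$, this contributes two types of loops: the adjacent half-twist generators $\sigma^{(i)}_j$ (swaps of two consecutive particles on $O_i$), and a rotation generator $\tau_i$ corresponding to one cyclic shift of the $\cN_i$ particles around $O_i$.

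Second I would compute $\Phi$ on each generator using the decomposition $H_1(\Clm) \simeq \Z^{n+1}\oplus\Z^n\oplus\Z^{\bar l}\oplus\Z$. The loop around the $q$-puncture contributes $\sigma_0$, and since $\nu(\sigma_0)=0$ it vanishes directly. A half-twist contributes purely to $\delta$. For the rotation $\tau_i$, the moving particle encircles the two symmetric $p$-punctures enclosed by $O_i$ (contributing a combination of $\sigma_i$ and $\bar{\sigma}_i$) and crosses each of the other $\cN_i-1$ particles once (contributing $(\cN_i-1)\delta$); altogether one obtains an expression of the form $a\,x_i+b\,\bar x_i+(\cN_i-1)\bar{d'}$ for the appropriate orientation constants $a,b$.

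Third I would apply the level-$\bar{\cN}$ change of variables $f^{\bar{\cN}}$, whose defining formulas $f^{\bar{\cN}}(\bar x_i)=x_i+(\cN_i-1)d'$ and $f^{\bar{\cN}}(\bar{d'})=4d'$ are designed precisely to produce the required cancellation: the correction term $(\cN_i-1)d'$ attached to $\bar x_i$ absorbs exactly the twist contribution arising from the $\cN_i-1$ particles left on $O_i$ when $\tau_i$ drags a particle past them, and the factor $4$ on $\bar{d'}$ is fixed by the image of the local system being $(2\Z)^n\oplus\Z^{\bar l}\oplus(4\Z)$. Verifying generator by generator that $f^{\bar{\cN}}\circ\Phi=0$ then concludes the proof.

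The main obstacle is the careful bookkeeping of the orientation data of the oval traversal and of the signs of the linking contributions with each symmetric puncture, since the cancellation is tight and depends on the multiplicity $\cN_i$ matching the label $(\cN_i-1)$ appearing in $f^{\bar{\cN}}(\bar x_i)$. The remark following the lemma makes clear that this is the conceptual point of the construction: the naive Lawrence-type local system (without the level-$\bar{\cN}$ twist) sends the rotation generator to a nonzero class, so the analogous lift does not exist, and it is precisely to force this cancellation that the refined local system $\Phi^{\bar{\cN}}$ is introduced.
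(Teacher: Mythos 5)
Your overall strategy --- reduce to the covering-space lifting criterion and verify that $\Phi^{\bar{\cN}}$ annihilates the image of $\pi_1(L(\bar{\cN}))\to\pi_1(\Clm)$ generator by generator --- is exactly the paper's, and your computation for the rotation loop, together with the observation that $f^{\bar{\cN}}(\bar{x}_i)=x_i+(\cN_i-1)d'$ and $f^{\bar{\cN}}(\bar{d'})=4d'$ are rigged precisely to make that class cancel, is the heart of the paper's argument. (The paper checks the full rotation of all $\cN_i$ particles, obtaining $\cN_i\,x_i-\cN_i\,\bar{x}_i+\frac{\cN_i(\cN_i-1)}{2}\bar{d'}$, rather than the single cyclic shift; since the deck group is torsion-free the two checks are equivalent.)

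There is, however, a genuine error in your identification of $\pi_1(L(\bar{\cN}))$, and taken at face value it would sink the proof. The unordered configuration space of $\cN_i$ points on a circle deformation retracts onto the equally-spaced configurations, so its fundamental group is $\Z$, generated by the cyclic shift alone: two consecutive particles confined to a one-dimensional oval cannot swap without colliding, so the ``adjacent half-twist generators'' $\sigma^{(i)}_j$ you list do not exist. This is not a harmless overcount. A half-twist represents the class $\delta$, and $f^{\bar{\cN}}(\nu(\delta))=f^{\bar{\cN}}(\bar{d'})=4d'\neq 0$, so for that (phantom) generator the asserted vanishing $f^{\bar{\cN}}\circ\Phi=0$ is false; the lemma holds precisely because such loops cannot occur when the geometric support is a curve rather than a thickened annulus. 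Relatedly, your description of $\tau_i$ as a single moving particle that ``crosses each of the other $\cN_i-1$ particles once'' is not a path in $L(\bar{\cN})$ either, for the same collision reason; the actual generator moves all $\cN_i$ particles simultaneously by $2\pi/\cN_i$, and its $\bar{d'}$-coefficient is read off from the relative winding of the whole family. With the generating set corrected to the cyclic shifts (plus the trivial loop around the $q$-puncture, killed because $\nu(\sigma_0)=0$), the rest of your argument goes through and coincides with the paper's.
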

\begin{figure}[H]
\centering
\hspace{-5mm}\includegraphics[scale=0.37]{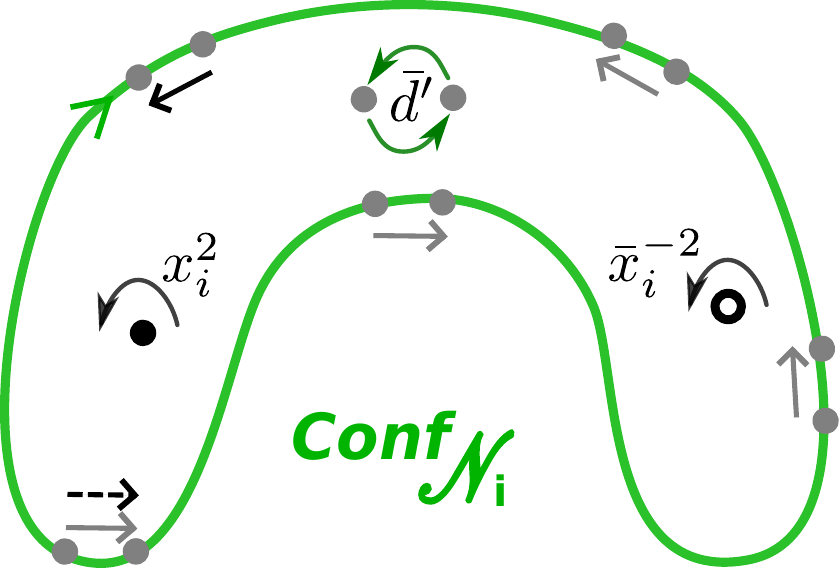}
\caption{\normalsize Monodromy of configurations on a circle}
\label{Picturemon}
\end{figure}
\begin{rmk}
In Figure \ref{Picturemon} we use the local system after we pass to the group ring, so all notations are multiplicative. For the purpose of clarity of signs, in the following proof we will use the additive notation, as below.
\end{rmk}
\begin{proof}
In order to prove that the geometric support given by configuration spaces on circles lifts to the covering, we will check that the total monodromy on each circle gets evaluated to zero by the local system $\bar{\Phi}^{\cN}$.

 More precisely, for a fixed $i\in \{1,...,n\}$, the local system $\Phi$ counts the monodromy around the symmetric points $(i,2n+1-i)$ with variables $2x_{i}$ and $2\bar{x}_{i}$. 
Now, let us look at a loop given by $\cN_i$ particles that go around a circle, as in Figure \ref{Picturemon}. The local system $\Phi$ (introduced in Definition \ref{localsystem0}) will count the total monodromy for this circle by the following expression:  
\begin{equation}\label{m1}
\begin{aligned}
&2\cN_i\cdot x_i-2\cN_i\cdot \bar{x}_i+\frac{\cN_i(\cN_i-1)}{2}(2\bar{d'})=\\
&=2\cN_i\cdot x_i-2\cN_i\cdot \bar{x}_i+\cN_i(\cN_i-1)\bar{d'}.
\end{aligned}
\end{equation} 
 
Here, $2\cN_i\cdot x_i$ comes from the rotation around the puncture $i$, $2\cN_i\cdot \bar{x}_i$ from the rotation around the symmetric puncture $2n+1-i$ and $\frac{\cN_i(\cN_i-1)}{2}(2\bar{d'})$ from the relative winding of the $\cN_i$ particles along the circle. 

We remark that this monodromy does not wanish, so the local system $\Phi$ is not enough to give us well defined homology classes.

Further, we look at the level $\bar{\cN}$ covering space and its homology groups. For this, we have to change the variables using the function $f^{\bar{\cN}}$ (using Definition \ref{N-chh} and Definition \ref{localsystem}). If we do so, we change the variables in the following way:

\begin{equation}
\begin{cases}
&x_i \rightarrow x_{i},\\    
&\bar{x_i} \rightarrow x_{i}+(\cN_i-1)d',\\ 
&\bar{d'}\rightarrow 2d'.
\end{cases}
\end{equation}
Then, the monodromy of the loop, expressed in relation \eqref{m1} gets evaluated to the following:
\begin{equation}
\begin{aligned}
&2\cN_i\cdot x_i+2\cN_i\cdot \bar{x}_i+\cN_i(\cN_i-1)\bar{d'} \rightarrow\\
 \rightarrow \ & 2\cN_i\cdot x_{i}-2\cN_i \cdot (x_{i}+(\cN_i-1)d')+\cN_i(\cN_i-1)2d'\\
& \ \ \ \ \ \ \ \ \ \ \ \ \ \ \ \ \ \ \ \ \ \ \ \ \ =0.
\end{aligned}
\end{equation} 

This means that the configuration space on a circle around these symmetric punctures lifts to a well-defined submanifold in the level $\bar{\cN}$ covering. 
On the other hand, the local system $\Phi$ is chosen to have trivial monodromy around the puncture labeled by $0$, so the circle which goes around this puncture has a well-defined lift.  

Then, we do this for all configuration spaces on the $n$ circles and the circle which goes around the puncture $0$ and remark that the total monodromy of such a loop vanishes. This shows that we have a well-defined lift that gives a submanifold $\tilde{L}(\bar{\cN})$ in the $\bar{\cN}-$covering space. Further on, if we fix a base point for lifting it, then it leads to a well-defined homology class in: 
$$\mathscr H^{\bar{\cN},\partial}_{n, 1+\sum_{i=1}^{n} \cN_i,\bar{l}}.$$This concludes the lifting property at the level $\cN$ covering space.
\end{proof}

\subsection{Braid group action}
We will use the set of braids which have $n+1+{\bar{l}}$ strands, whose induced action on the punctured disc preserves the colouring of the punctures (induced by $C$). We denote the set of such braids by $B^{C}_{n+{\bar{l}}+1}$.
\begin{prop}[\cite{CrM}] \label{colbr} 
The braid group action coming from the mapping class group action on the punctured disc induces an action on the homology of the level $\bar{\cN}$-covering, compatible with the deck transformations, as below:
$$B^{C}_{n+{\bar{l}}+1} \curvearrowright H^{\bar{\cN}}_{n,m,{\bar{l}}} \ \left(\text{as  module over the ring } \C[x_1^{\pm 1},...,x_l^{\pm 1},y_1^{\pm 1}..., y_{\bar{l}}^{\pm 1}, d^{\pm 1}]\right).$$ 
\end{prop}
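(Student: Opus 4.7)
The plan is to deduce the proposition from standard covering space theory together with a computation showing that the coloured local system is invariant under braids in $B^{C}_{n+\bar{l}+1}$.

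First, I would recall that the mapping class group of the $(2n+\bar{l}+1)$-punctured disc (relative to its boundary) acts on $C_{n,m,\bar{l}}=\Conf_{m}(\mathscr D_{2n+1,\bar{l}})$ by pushing configurations forward. This gives a well-defined outer action on $\pi_{1}(C_{n,m,\bar{l}},\mathbf{d})$, and choosing a canonical path from $\mathbf{d}$ to its image fixes an actual action on $\pi_{1}$ well-defined up to inner automorphism; this is enough to obtain an action on homology classes of any covering whose defining morphism is invariant. By a general principle (\emph{cf.}\ the construction used in the Lawrence representation and recalled in \cite{CrM}), a self-homeomorphism $\varphi$ of the base lifts to the covering $\bar{C}^{\bar{\cN}}_{n,m,\bar{l}}$ as a map equivariant for the deck transformations precisely when $\bar{\Phi}^{\bar{\cN}}\circ \varphi_{*}=\bar{\Phi}^{\bar{\cN}}$ after the coefficient specialisation $f_{C}$. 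So the main point is this invariance.

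Next, I would compute the braid action on the $H_{1}$ generators $\{[\sigma_{i}],[\bar{\sigma}_{i'}],[\gamma_{j}],[\delta]\}$ of Figure~\ref{Localsystt}. A braid $\beta\in B^{C}_{n+\bar{l}+1}$ acts as a permutation $\pi_{\beta}$ on those $p$-punctures it moves and as the identity on the $q$-puncture and on the $s$-punctures; moreover, because the diagonal of the symmetric power is preserved setwise, $\beta_{*}[\delta]=[\delta]$, and the $s$-puncture generators are fixed, $\beta_{*}[\gamma_{j}]=[\gamma_{j}]$. On $[\sigma_{i}]$ (and $[\bar{\sigma}_{i'}]$) the action is conjugation by a word in the braid group, whose image under abelianisation is $[\sigma_{\pi_{\beta}(i)}]$ (resp.\ the correspondingly permuted bar generator). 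After composing with $\bar{\Phi}^{\bar{\cN}}=\eta\circ f^{\bar{\cN}}\circ\nu\circ[\ ]$ and then with the colouring morphism $f_{C}$, each $x_{i}$ is replaced by $x_{C(i)}$, so the permutation of punctures by $\beta$ becomes an identification among identical variables, since $\pi_{\beta}$ preserves the colouring by the assumption $\beta\in B^{C}_{n+\bar{l}+1}$. Similarly, $\bar{x}_{i}\mapsto x_{C(i)}+(\cN_{i}-1)d'$ and the multi-level $\bar{\cN}$ is symmetric in the paired punctures, so the level change of variables remains invariant. Therefore
\[
f_{C}\circ\bar{\Phi}^{\bar{\cN}}\circ\beta_{*}=f_{C}\circ\bar{\Phi}^{\bar{\cN}}
\]
as maps from $\pi_{1}(C_{n,m,\bar{l}})$ to $\C[x_{1}^{\pm 1},\ldots,x_{l}^{\pm 1},y_{1}^{\pm 1},\ldots,y_{\bar{l}}^{\pm 1},d^{\pm 1}]$.

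From this invariance I would conclude the lift: $\beta$ admits a canonical lift $\tilde{\beta}:\bar{C}^{\bar{\cN}}_{n,m,\bar{l}}\to\bar{C}^{\bar{\cN}}_{n,m,\bar{l}}$ fixing a chosen lift $\tilde{\mathbf d}$ of the base point, and any two such lifts differ by a deck transformation. Functoriality of Borel--Moore and relative homology (with respect to the chosen splitting of the boundary defining $\mathscr H^{\bar{\cN}}_{n,m,\bar{l}}$ and $\mathscr H^{\bar{\cN},\partial}_{n,m,\bar{l}}$ in Definition~\ref{T2}) then gives $\C[x_{1}^{\pm 1},\ldots,d^{\pm 1}]$-linear endomorphisms; the composition $\widetilde{\beta\beta'}=\tilde\beta\,\tilde{\beta'}$ holds after suitably normalising the base-point lift, which defines the group action. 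Finally I would check that $\beta$ preserves the boundary subspace $P^{-}$ and the infinity decomposition used in Definition~\ref{T2}, so that the action restricts to the images $H^{\bar{\cN}}_{n,m,\bar{l}}$ and $H^{\bar{\cN},\partial}_{n,m,\bar{l}}$ of Proposition~\ref{P:5}.

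The main obstacle is the verification in the middle paragraph: ensuring that the bar variables $\bar{x}_{i}$, which are associated to the symmetric punctures on the right half of the disc, interact correctly with both $f^{\bar{\cN}}$ and $f_{C}$. Because the braids in $B^{C}_{n+\bar{l}+1}$ are constrained to preserve both the colouring and the symmetry in the multi-level $\bar{\cN}$, this works, but the bookkeeping of generators $[\sigma_{i}],[\bar{\sigma}_{i}]$ and their exact braid images is the step that actually forces us to work with the \emph{level} $\bar{\cN}$ covering and not with a finer Lawrence-type covering, in harmony with the philosophy of the paper.
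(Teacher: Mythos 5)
The paper does not actually prove Proposition \ref{colbr}: it is imported verbatim from \cite{CrM}, so there is no internal argument to measure yours against. Your reconstruction is the standard one for Lawrence-type representations and is essentially sound, with one point of looseness worth flagging. You say $\beta$ lifts to the level-$\bar{\cN}$ covering ``as a map equivariant for the deck transformations precisely when'' the local system is invariant after $f_C$. What is literally true is that $\beta$ lifts whenever $\beta_*$ preserves $\ker\Phi^{\bar{\cN}}$ --- which holds because a colour-preserving braid merely permutes the generators $x_i,\bar{x}_i$ of the target group, hence induces an automorphism of the deck group rather than the identity --- and the lift is then equivariant only up to that permutation automorphism; it becomes genuinely linear over $\C[x_1^{\pm 1},\dots,x_l^{\pm 1},y_1^{\pm 1},\dots,y_{\bar{l}}^{\pm 1},d^{\pm 1}]$ exactly after tensoring via $f_C$, which identifies the permuted variables. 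This is why the module structure in the statement is over the $l$-variable ring rather than the $n$-variable one, and your middle paragraph does in the end land on the correct identity $f_C\circ\bar{\Phi}^{\bar{\cN}}\circ\beta_*=f_C\circ\bar{\Phi}^{\bar{\cN}}$. One further caveat: invariance of the term $\bar{x}_i\mapsto x_i+(\cN_i-1)d'$ requires not only that $\beta$ preserve the colouring but also that the multi-level be constant on colour classes, i.e.\ $\cN_i=\cN_j$ whenever $C(i)=C(j)$; in the paper's applications the acting braid is $\beta_n\cup{\mathbb I}_{n+1}$, which fixes the right-hand punctures carrying the $\bar{x}_i$, so the issue is vacuous there, but it should appear as an explicit hypothesis in the general form you state. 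Your final check that $P^-$ and the splitting of the boundary at infinity from Definition \ref{T2} are preserved, so that the action restricts to the submodules of Proposition \ref{P:5}, is the right remaining verification.
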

\begin{prop}[\cite{CrM} Intersection pa1iring]\label{P:3'}
We have a specialised intersection pairing assoaciated to the above homology groups:
$$\left\langle ~,~ \right\rangle:  H^{\bar{\cN}}_{n,m,{\bar{l}}} \otimes H^{\bar{\cN},\partial}_{n,m,{\bar{l}}} \rightarrow \C[x_1^{\pm 1},...,x_l^{\pm 1},y_1^{\pm 1}..., y_{\bar{l}}^{\pm 1}, d^{\pm 1}].$$
The formula for the computation of $\left\langle ~,~ \right\rangle$ is the same as the one from Proposition \ref{P:3}, specialised via $f_C$:
$$ \left\langle ~,~ \right\rangle= \  \ll ~,~ \gg|_{f_C}.$$
\end{prop}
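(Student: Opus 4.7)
The plan is to derive $\langle\,,\,\rangle$ directly as a base change of the intersection pairing $\ll\,,\,\gg$ established in Proposition~\ref{P:3'''}. Let $R := \C[x_1^{\pm 1},\ldots,x_n^{\pm 1},y_1^{\pm 1},\ldots,y_{\bar{l}}^{\pm 1},d^{\pm 1}]$ and $R_C := \C[x_1^{\pm 1},\ldots,x_l^{\pm 1},y_1^{\pm 1},\ldots,y_{\bar{l}}^{\pm 1},d^{\pm 1}]$, with $R_C$ regarded as an $R$-algebra via the ring homomorphism $f_C$ from \eqref{eq:8}. Then by Definition~\ref{D:4}, the two specialised homology groups are the base changes
\[
H^{\bar{\cN}}_{n,m,\bar{l}} = \mathscr H^{\bar{\cN}}_{n,m,\bar{l}} \otimes_{R} R_C, \qquad H^{\bar{\cN},\partial}_{n,m,\bar{l}} = \mathscr H^{\bar{\cN},\partial}_{n,m,\bar{l}} \otimes_{R} R_C .
\]

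First, I would verify that the composition $f_C \circ \ll\,,\,\gg$ retains the sesquilinearity that $\ll\,,\,\gg$ has over $R$. The key point is that $f_C$ maps each generator to another generator, and in particular commutes with the natural conjugation $x_i \mapsto x_i^{-1}$, $y_j \mapsto y_j^{-1}$, $d \mapsto d^{-1}$ at the level of both coefficient rings. Consequently, by the universal property of tensor product one obtains a canonical $R_C$-sesquilinear pairing
\[
\langle\,,\,\rangle : H^{\bar{\cN}}_{n,m,\bar{l}} \otimes H^{\bar{\cN},\partial}_{n,m,\bar{l}} \to R_C,
\]
and by construction $\langle\,,\,\rangle = \ll\,,\,\gg|_{f_C}$.

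For the formula identifying $\langle\,,\,\rangle$ with the one obtained from Proposition~\ref{P:3}, I would apply $f_C$ to the right-hand side of \eqref{eq:1}. The generating variables $x_i$, $y_j$, $d$ appearing as values of $\bar\Phi^{\bar{\cN}}$ on the loops $l_x$ get relabelled according to the colouring $C$, so each monodromy around the puncture $i$ contributes $x_{C(i)}$ and each monodromy around an $s$-puncture $\bar p_j$ contributes $y_{C(\bar p_j)}$, yielding exactly the computation prescribed by $\langle\,,\,\rangle$. The only mild obstacle is verifying that the pairing descends cleanly to the submodules $\mathscr H^{\bar{\cN}}_{n,m,\bar{l}}$ and $\mathscr H^{\bar{\cN},\partial}_{n,m,\bar{l}}$ of the ambient Borel-Moore and relative homology groups, as set up via Proposition~\ref{P:5}; this reduces to checking that these submodules are preserved by multiplication by the generators of $R$, which is inherited from the deck transformation action described in Remark~\ref{homcov}, and then the passage through $f_C$ is purely formal.
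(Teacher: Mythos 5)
The paper does not actually prove this proposition: it is quoted as a result of \cite{CrM}, and within this paper the displayed formula $\left\langle ~,~\right\rangle = \ll~,~\gg|_{f_C}$ functions essentially as the definition of the specialised pairing. Your argument is therefore not in conflict with anything in the paper; it supplies the formal justification that the paper delegates to the reference, namely that a pairing defined over $R$ descends under the base change $R\to R_C$ along $f_C$ once one checks compatibility of $f_C$ with the module structures (and, if one keeps track of sesquilinearity, with the involution inverting the variables, which you correctly observe $f_C$ respects since it merely relabels generators). Your description of how $f_C$ acts on the loop monodromies in formula \eqref{eq:1} matches the paper's set-up. One small remark: your final concern about the pairing descending to the submodules $\mathscr H^{\bar{\cN}}_{n,m,\bar{l}}$ and $\mathscr H^{\bar{\cN},\partial}_{n,m,\bar{l}}$ is already discharged before specialisation, since Proposition \ref{P:3'''} asserts the pairing $\ll~,~\gg$ directly on those submodules; nothing further needs to be checked at the $f_C$ stage beyond the base change itself.
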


\section{Coloured Jones polynomials for framed links}\label{S:J}
In the next two sections, we construct two topological models: the first one for coloured Jones polynomials for framed links and the second one for coloured Alexander polynomials for framed links. 

These two models are different and each of these has its own geometric characteristics. More specifically, the homological set-ups are different and the local systems that we will use are different. In particular, for the case of link invariants, the homology classes for the two quantum invariants $J_{N_1,...,N_l}$ and $\Phi^{\cN}_{\la}$ cannot be seen in the same homology group which then gets specialised in two different manners in order to recover the two link invariants. 

 We start with a topological intersection model for coloured Jones polynomials for links, whose components are coloured with different colours. Such a formula is presented in \cite{WRT}, where the geometric supports for the second homology class are given by configuration spaces on figure eights. The advantage of this new model is that we construct homology classes using configuration spaces on circles. We will do this by choosing a more subtle local system than the one from \cite{WRT}. 

Let us consider a framed oriented link $L=K_1 \cup ...\cup K_l$ with $l$ components and framings $f_1,...,f_l\in \Z$. Then, let $\beta_n \in B_n$ be a braid representative such that $L= \widehat{\beta_n}$. 

We fix $N_1,...,N_l\in \N$ a set of colours and the associated coloured multi-index:
$$\bar{N}:=(N_1,...,N_l).$$
In the next part, we will choose the local system such that our geometric support given by configuration spaces on circles leads to a well defined homology class in the homology of that covering. 

\begin{notation}
Let $M\in \N$, and denote by $V_{M}$ the $M$-dimensional representation of the quantum group $U_q(sl(2))$ for generic $q$.

In the next part we colour the components of $L$ with the representations $V_{N_1},...,V_{N_l}$. Then, we denote by $J_{\bar{N}}(L,q)$ to be the coloured Jones polynomial of this framed link (as defined in \cite{RT}).
\end{notation}
\begin{defn}(Induced colorings)\label{colourings}\\
a) (Colourings associated to the braid)  The colouring of $L$ via the colours $\bar{N}$ gives a colouring of the strands of the braid, and we define the associated colours by:
$(C_1,...,C_n)$, as in Figure \ref{colouringbraid}:
\begin{figure}[H]
\centering
\includegraphics[scale=0.45]{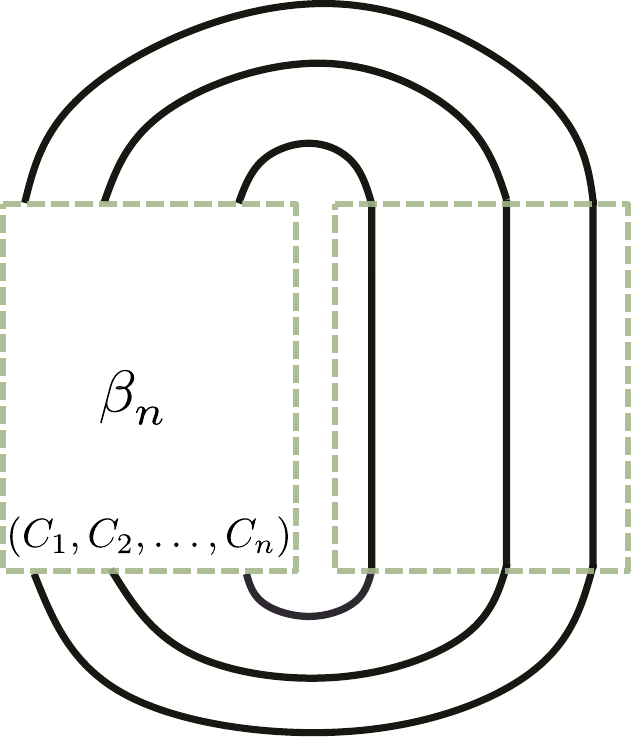}
\caption{\normalsize Colouring for the braid}
\label{colouringbraid}
\end{figure}
 
After this, we look at the link $L$ as being the closure of the braid together with $n$ straight strands. This gives a colouring of $2n$ points $C:\{1,...,2n\}\rightarrow \{1,...,l\}$. 
Let us denote by 
\begin{equation}\label{eq:N}
\ccN_i:=N_{C(i)}.
\end{equation}

We now have the following colours on the $2n$ points:
$$\bar{C}^{\bar{N}}:=(\ccN_1,...,\ccN_n,\ccN_n,...,\ccN_1).$$ 
\end{defn}
\begin{defn}(Set of states)\label{stJ} Let us define the following indexing set:
 
 $$C(\bar{N}):= \big\{ \bar{i}=(i_1,...,i_{n-1})\in \N^{n-1} \mid 0\leq i_k \leq \ccN_{k+1}-1, \  \forall k\in \{1,...,n-1\} \big\}.$$

\end{defn}
\begin{defn}[$\bar{N}$-coloured multi-level]
We consider the following indices:
\begin{equation}\label{eq:8'} 
\begin{cases}
&\cN_1=1,\\
&\cN_i=\ccN_i-1, i\in \{2,...,n\}.
\end{cases}
\end{equation}

Then, we define $\bar{N}$-coloured multi-level (associated to the coloured multi-index $\bar{N}$):

\begin{equation}
\bar{\cN}(\bar{N}):=(\cN_1,...,\cN_n) \left( =(1,\ccN_2-1,...,\ccN_n-1)\right).
\end{equation}
\end{defn}

\subsection{Homology classes}

Now, we use the induced colouring and the associated indexing set $C(\bar{N})$, in order to define the homology groups that we use for the topological model. More precisely, we fix the configuration space of $2+\sum_{i=2}^{n} (\ccN_i-1)$ points on the $(2n+2)$-punctured disc. After this, let us consider the local system $\Phi^{\bar{\cN}(\bar{N})}$ associated to the following parameters:
$$ n \rightarrow n; \ \ \ m\rightarrow 2+\sum_{i=2}^{n} (\ccN_i-1); \ \ \ {\bar{l}}\rightarrow 1; \ \ \ \bar{\cN}\rightarrow \bar{\cN}(\bar{N}).$$ 
We will use the homology groups associated to this data:
$$H^{\bar{\cN}(\bar{N})}_{n, 2+\sum_{i=2}^{n} (\ccN_i-1),1} \ \ \ \ \ \ \ \ \ \ \ \ \ \ \ \text{ and }\ \ \ \ \ \ \ H^{\bar{\cN}(\bar{N}),\partial}_{n,2+\sum_{i=2}^{n} (\ccN_i-1),1}.$$
For the next part we will erase the third component from the indices of the homology groups.

Now we have all the tools needed in order to introduce the homology classes that we use for the intersection model for coloured Jones polynomials. 

\begin{defn} (Homology classes)\\
Let $\bar{i}=(i_1,...,i_{n-1}) \in C(\bar{N})$ be a multi-index. We construct two homology classes associated to this multi-index, which are given by the geometric supports from Figure \ref{Picture0}:
 \begin{figure}[H]
\centering
$${\color{red} \FJ \in \HJ} \ \ \ \ \ \ \ \ \ \ \text{ and } \ \ \ \ \ \ \ \ \ \ \ \ \ {\color{dgreen} \LJ \in \HJd} .$$
$$\hspace{5mm}\downarrow \text{ lifts }$$
\vspace{-6mm}

\includegraphics[scale=0.4]{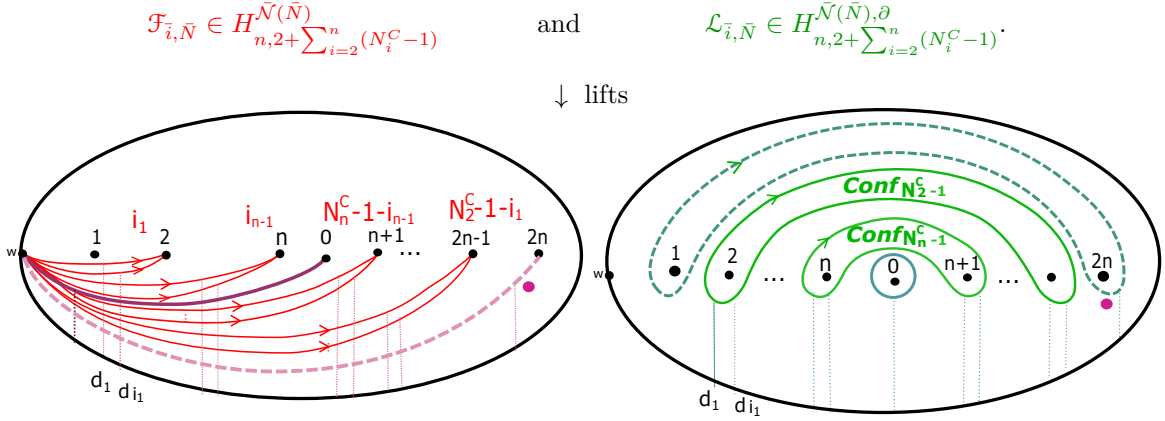}
\caption{ \normalsize Lagrangians for Coloured Jones link invariants coloured with diferent colours}
\label{Picture0}
\end{figure}
\end{defn}
\begin{prop}
The homology class $\LJ \in \HJd$ is well-defined.
\end{prop}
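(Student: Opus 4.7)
The plan is to deduce this proposition essentially as a special case of Lemma~\ref{gendsuppl} (the general lifting lemma for submanifolds supported on ovals), applied to the specific multi-level $\bar{\cN}(\bar{N}) = (1,\ccN_2-1,\dots,\ccN_n-1)$ used for coloured Jones. The geometric support of $\LJ$, read off from Figure~\ref{Picture0}, is a product of configuration spaces on symmetric ovals around the puncture pairs $(i,2n+2-i)$ together with one particle on a small arc/circle attached to the puncture labelled $0$ (and possibly a particle near the $s$-puncture). The crucial point is that the multiplicity assigned to the $i$-th oval in the geometric support of $\LJ$ coincides with $\cN_i$ in the definition of $\bar{\cN}(\bar{N})$: namely one particle on the first oval and $\ccN_i-1$ particles on the $i$-th oval for $i\geq 2$. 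Hence the hypothesis of Lemma~\ref{gendsuppl} is met on the nose.

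First I would match the data: check that the total number of particles on the ovals, plus the one going around the $q$-puncture, equals $1+\sum_{i=2}^n(\ccN_i-1)$, which is $m_J(\bar N)$ after the reduction coming from $\cN_1=1$; in particular the geometric support lies in the correct configuration space $\Conf_{1+\sum_{i=2}^n(\ccN_i-1)}(\mathbb D_{2n+2})$. Next I would repeat the monodromy computation from the proof of Lemma~\ref{gendsuppl}: for each $i\in\{2,\dots,n\}$ the total monodromy of $\ccN_i-1$ particles traversing the $i$-th oval evaluates under $\Phi$ to
\begin{equation*}
(\ccN_i-1)\,x_i \;-\;(\ccN_i-1)\,\bar x_i \;+\;\frac{(\ccN_i-1)(\ccN_i-2)}{2}\,\bar{d'},
\end{equation*}
and after applying the change of variables $f^{\bar{\cN}(\bar{N})}$ (which sends $\bar x_i\mapsto x_i+(\cN_i-1)d'$ and $\bar{d'}\mapsto 4d'$ with $\cN_i=\ccN_i-1$) this expression becomes $-(\ccN_i-1)(\ccN_i-2)d'+2(\ccN_i-1)(\ccN_i-2)d'$, which after the correct bookkeeping of the factor of two in $\nu$ (recall $\nu(\sigma_i)=2x_i$) vanishes. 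The single particle on the first oval produces no obstruction since $\cN_1=1$ makes its ``relative winding'' term trivially zero, and the loop around the $q$-puncture has trivial monodromy under $\Phi$ by construction (the generator $\sigma_0$ is killed by $\nu$).

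With vanishing monodromy on every closed component of the geometric support, the product submanifold admits a well-defined lift in the level $\bar{\cN}(\bar{N})$ covering, following the dictionary of Notation~\ref{paths}: one chooses paths from the base point $\bf d$ to each oval/arc, lifts them starting at $\tilde{\bf d}$, and lifts the submanifold through the endpoints. The resulting lifted submanifold is a closed half-dimensional subset of $\ClmN$ (with $m=1+\sum_{i=2}^n(\ccN_i-1)$); its support avoids the boundary piece $P^-$ on the left semicircle (since the ovals and the small arcs lie on the right side of the disc, by inspection of Figure~\ref{Picture0}), and the only non-compactness comes from particle collisions, so the class lives in $H^{\text{lf},\Delta}_m(\ClmN,\partial;\Z)$. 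By Proposition~\ref{P:5} it is the image of a class in the twisted Borel--Moore homology of the base, hence belongs to the submodule $\mathscr H^{\bar{\cN}(\bar{N}),\partial}_{n,m,1}$; applying the specialisation $f_C$ then places $\LJ$ in $\HJd$.

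The main obstacle, and really the only one that requires care, is the bookkeeping in the monodromy computation: one must be attentive to the factor of $2$ in $\nu(\sigma_i)$ versus the factor of $4$ in $f^{\bar{\cN}}(\bar{d'})$, and to the fact that the relative winding contribution for $k$ particles on a circle is $\binom{k}{2}\bar{d'}$, so setting $k=\ccN_i-1$ and $\cN_i=\ccN_i-1$ is what makes the cancellation work exactly. Once this arithmetic is checked, the statement follows without further difficulty.
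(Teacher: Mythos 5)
Your proposal is correct and follows essentially the same route as the paper: identify the geometric support of $\LJ$ with the submanifold $L(\bar{\cN}(\bar{N}))$ of Definition \ref{gendsupp}, invoke Lemma \ref{gendsuppl} for the existence of the lift, and fix the lift via the paths of Notation \ref{paths}. Your explicit re-derivation of the monodromy cancellation merely re-proves that lemma in the special case $\cN_i=\ccN_i-1$, which the paper simply cites.
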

\begin{proof}
 The geometric support for the dual class, shown in the right hand side of Figure \ref{Picture0} is the same as the geometric support from Definition \ref{gendsupp}, associated to the multi-level $\cN(\bar{N})$, which we denoted by $$L(\cN(\bar{N})).$$ Further on, we showed in Lemma \ref{gendsuppl} that this submanifold has well-defined lifts in the covering. 
 
 In order to fix the lift, we use the procedure of lifting described in Notation \ref{paths}. More specifically, we use the path in the configuration space induced by the collection of straight vertical paths which connect the ovals to the boundary of the disc, shown in the right hand side of Figure \ref{Picture0} (see \cite{Crsym}, Section 5 for a similar detailed construction of this path in the configuration space). This is a path from the base point ${\bf d}$ to our submanifold $L(\cN(\bar{N}))$.  
 
The lift of this path through the base point $\tilde{{\bf d}}$ from the covering gives us a base point for lifting the submanifold, and we obtain a lift:
$$\tilde{L}(\cN(\bar{N}))_{\bar{i}}.$$
Following Lemma \ref{gendsuppl}, the submanifold $\tilde{L}(\cN(\bar{N}))_{\bar{i}}$ gives a well-defined homology class in the dual homology of the covering $\HJd$, which we denote:
$$\LJ \in \HJd.$$

\begin{figure}[H]
\centering
\hspace{-5mm}\includegraphics[scale=0.37]{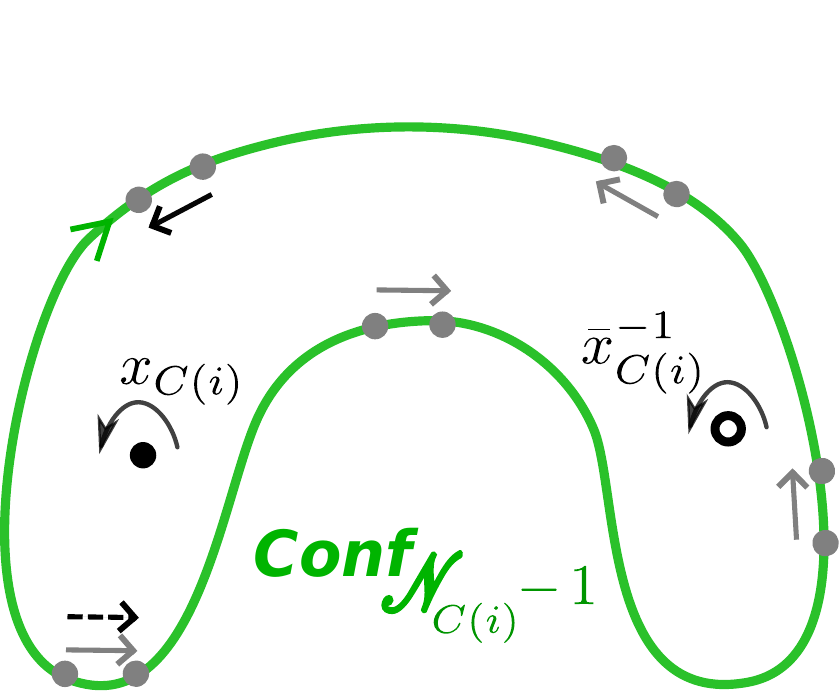}
\caption{\normalsize Monodromy of configurations on a circle}
\label{PicturemonJ}
\end{figure}
\end{proof}

In the next part, we use the specialisation of coefficients from Section \ref{S:not}, which we remind below.
\begin{defn}[Specialisations for link invariants] We use the following specialisation:
$$ \sJt: \C[u_1^{\pm 1},...,u_{l}^{\pm 1},x_1^{\pm 1},...,x_{l}^{\pm 1},y^{\pm 1},d^{\pm 1}] \rightarrow \C[q^{\pm 1}]$$
\begin{equation}\label{eq:8''''} 
\begin{cases}
&\sJt(u_j)=\left(\sJt(x_j)\right)^t=q^{N_i-1}\\
&\sJt(x_i)=q^{N_i-1}, \ i\in \{1,...,l\}\\
&\sJt(y)=[N^C_1]_{q},\\
&\sJt(d)= q^{-1}.
\end{cases}
\end{equation}
\end{defn}
\subsection{Intersection model}\label{modelJ}
In this part we present a topological model for the coloured Jones polynomial of a link coloured with the colours $N_1,...,N_l$. More precisely, we will see that it can be obtained from an intersection pairing which uses the classes $\FJ$ and $\LJ$ for all indices $\bar{i} \in C(\bar{N})$, as stated in Theorem \ref{THEOREMJ}. We remind the formula below.
\begin{thm}[Topological model for coloured Jones polynomials for coloured links via ovals]
\begin{equation}\label{THJ}
\begin{aligned}
 J_{N_1,...,N_l}(L,q)& =~  \left( \PJ \right)\Bigm| _{\sJt}.
\end{aligned}
\end{equation} 
where the intersection pairing is given by the formula from below:
\begin{equation}
\begin{cases}
& \PJ \in \Z[u_1^{\pm 1},...,u_l^{\pm 1},x_1^{\pm 1},...,x_l^{\pm 1},y^{\pm 1}, d^{\pm 1}]\\
& \PJ:=\prod_{i=1}^l u_{i}^{ \left(f_i-\sum_{j \neq {i}} lk_{i,j} \right)} \cdot \prod_{i=2}^{n} x^{-1}_{C(i)} \cdot \\
  & \ \ \ \ \ \ \ \ \ \ \ \ \ \ \ \ \ \sum_{\bar{i}\in C(\bar{N})} \left\langle(\beta_{n} \cup {\mathbb I}_{n+2} ) \ { \color{red} \FJ}, {\color{dgreen} \LJ}\right\rangle. 
   \end{cases}
\end{equation} 
\end{thm}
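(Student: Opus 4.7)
The plan is to establish the theorem by comparing the graded intersection $\PJ$, after specialisation via $\sJt$, with an already-known topological state sum for the coloured Jones polynomial. Concretely, I would first reduce the computation to the formula from Proposition~\ref{P:3'}, writing each summand $\langle (\beta_{n} \cup \mathbb{I}_{n+1}) \, \FJ, \LJ\rangle$ as a sum over transverse intersection points between the geometric supports (arcs and ovals) in the base configuration space $\Conf_{m_J(\bar N)}(\mathbb D_{2n+2})$, with each intersection point weighted by the monodromy of the loop $l_x$ through the level-$\bar{\cN}(\bar N)$ local system $\bar{\Phi}^{\bar{\cN}(\bar N)}$. The well-definedness of $\LJ$ is guaranteed by Lemma~\ref{gendsuppl}, which is the crucial input that makes the whole state sum live in the correct covering.

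Next, I would compare this state sum with the figure-eight model for coloured Jones polynomials of coloured links established in \cite{Cr2} (which recovers $J_{\bar N}(L,q)$ from intersections between arcs and figure eights in the Lawrence covering). The comparison proceeds by performing an isotopy of the geometric support: each figure eight around the pair of symmetric punctures $(i, 2n+1-i)$ can be isotoped, after passing to the level $\bar{\cN}(\bar N)$ covering, into the oval around the two corresponding punctures. The isotopy crosses the diagonal in a controlled way, and under the substitution $f^{\bar{\cN}}$ the corresponding monodromy contributions coming from the intersections with $\bar{x}_i$ are absorbed into combinations of $x_i$ and powers of $d'$. The level-$\bar{\cN}(\bar N)$ quotient is precisely designed so that the total contribution of a bunch of particles on an oval is trivial (this is the computation performed in the proof of Lemma~\ref{gendsuppl}), and therefore the intersection numbers match on the nose after applying $\sJt$.

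The normalising prefactors require a separate bookkeeping. The factor $\prod_{i=1}^l u_i^{f_i - \sum_{j\ne i} lk_{i,j}}$ encodes the framing correction arising from the ribbon element of $U_q(sl_2)$ acting on $V_{N_i}$, and under $\sJt$ it specialises to $\prod_i q^{(N_i-1)(f_i - \sum_j lk_{i,j})}$, which is exactly the framing anomaly between the unframed Reshetikhin-Turaev invariant of $\beta_n$ and the framed coloured Jones polynomial. The factor $\prod_{i=2}^n x^{-1}_{C(i)}$ compensates for a global shift in the winding of the particles on the ovals with respect to the $p$-punctures, ensuring that $\FJ$ and $\LJ$ are paired with the correct ``evaluation'' and ``coevaluation'' vectors in the tensor product $V_{N_{C_1}} \otimes \cdots \otimes V_{N_{C_n}}$. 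The variable $y$, associated to the $s$-puncture, specialises to $[N^C_1]_q$, which under the quantum-algebraic dictionary corresponds to the quantum dimension of the closing cap on the last strand.

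The main obstacle will be making the isotopy/comparison argument rigorous in the multi-colour setting. In the single-colour case (knots), treated in \cite{Cr2,Cr3}, the ovals all carry the same multiplicity and the local system evaluates symmetric punctures by opposite monodromies, so the comparison is essentially one-dimensional. In the multi-colour case, different pairs of symmetric punctures carry different multiplicities $\ccN_i - 1$, and the level-$\bar{\cN}(\bar N)$ covering is strictly intermediate between the Lawrence covering and the Heisenberg-type quotient. Tracking the precise power of $d$ that is accumulated when an oval is isotoped past the diagonal requires careful control of the change-of-variables $\tilde{f}^{\bar{\cN}}$ and of the winding contributions of the $s$-puncture; this is where most of the technical work will lie.
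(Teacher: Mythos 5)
Your strategy is genuinely different from the paper's, and it contains a gap at its central step. The paper does not compare $\PJ$ with the earlier figure-eight model at all: it re-derives the formula from the Reshetikhin--Turaev construction. Concretely, it first peels the middle circle and then the extremal circle off the geometric supports (producing the factors $d^{-4\sum_k i_k}$ and $y$ respectively), then invokes Martel's identification of the homological braid action with the quantum one, matches the sum of the reduced classes $\FJJJ$ with the cups and the intersection with the dual classes $\LJJJ$ with the caps (which enforces $i_k$ summing to $\ccN_k-1$ on symmetric punctures), and finally reads off the quantum-trace coefficient $\prod_{i=2}^{n}x^{-1}_{C(i)}\cdot d^{-4\sum_k i_k}$ from the action of the pivotal element $K^{-1}$ and the remaining prefactor from the framing. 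The paper explicitly remarks that its steps are parallel to those of \cite{WRT} but that the geometry of the classes is different; it never attempts to deform one model into the other.

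The gap in your proposal is the claim that each figure eight can be ``isotoped, after passing to the level $\bar{\cN}(\bar{N})$ covering, into the oval around the two corresponding punctures.'' A figure eight and an oval around the symmetric pair $(i,2n+1-i)$ are not isotopic in the punctured disc (they wind around the two punctures with opposite versus equal orientations), and, more importantly, the two families of classes do not live in the same covering: the figure-eight classes of \cite{WRT} are defined in the Lawrence-type covering, whereas the whole point of Section \ref{S:4} is that the oval support does \emph{not} lift there and only becomes a cycle in the strictly smaller level-$\bar{\cN}(\bar{N})$ covering. So there is no ambient space in which your isotopy takes place, and the monodromy-vanishing computation of Lemma \ref{gendsuppl} only guarantees that the oval class is well defined --- it does not imply that its pairing agrees with the figure-eight pairing. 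To make your route rigorous you would need an actual identity relating the two intersection pairings across the change of covering (for instance, expressing the image of the figure-eight class under the quotient of deck groups as an explicit multiple of the oval class and checking that the multiple is absorbed by the normalisation); absent that, the representation-theoretic detour the paper takes is doing real work, not just bookkeeping. Two smaller inaccuracies: the factor $\prod_{i=2}^{n}x^{-1}_{C(i)}$ comes from the pivotal structure in the quantum trace rather than from a winding normalisation of the ovals, and $y$ arises from the single extra intersection point near the $s$-puncture associated with the first (open) strand, specialising to $[N^C_1]_q$, not from a cap on the last strand.
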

\begin{proof}Using the expression of the specialisation of coefficients from Section \ref{S:not}, we see that we have to prove the following formula:
\begin{equation}\label{TTHJ}
\begin{aligned}
J_{N_1,...,N_l}(L,q)& =~ q^{ \sum_{i=1}^{l}\left( f_i- \sum_{j \neq i} lk_{i,j}\right)(N_i-1)} \cdot \\
 & \cdot \left(\sum_{\bar{i}\in C(\bar{N})} \left( \prod_{i=2}^{n}x^{-1}_{C(i)} \right)\cdot  \left\langle(\beta_{n} \cup {\mathbb I}_{n+2} ) \ { \color{red} \FJ}, {\color{dgreen} \LJ}\right\rangle \right)\Bigm| _{\sJt}.
\end{aligned}
\end{equation} 

The proof of this intersection formula will be done in $4$ main steps, which are similar to the ones used in the model for coloured Jones polynomials presented in \cite{WRT}. However, the geometry of these classes is totally different than the one from \cite{WRT} which uses figure eights as geometric supports. Let us outline the main steps as below.

\

\

{\bf Step 1} 

Let us define two homology classes

$${\color{red} \FJ' \in \HJJ} \ \ \ \ \ \ \ \ \ \ \text{ and } \ \ \ \ \ \ \ \ \ \ \ \ \ {\color{dgreen} \LJ' \in \HJJd}$$

which are obtained from the geometric supports which are the same as the geometric supports of the classes $\FJ$ and $\LJ$, the only change being that we remove the 1-dimensional part  supported around the puncture of disc that is labeled by $0$ ( more specifically, we remove the purple segment and the blue circle), as in Figure \ref{FFJ1}:

\begin{figure}[H]
\centering
\vspace{-1mm}
\includegraphics[scale=0.4]{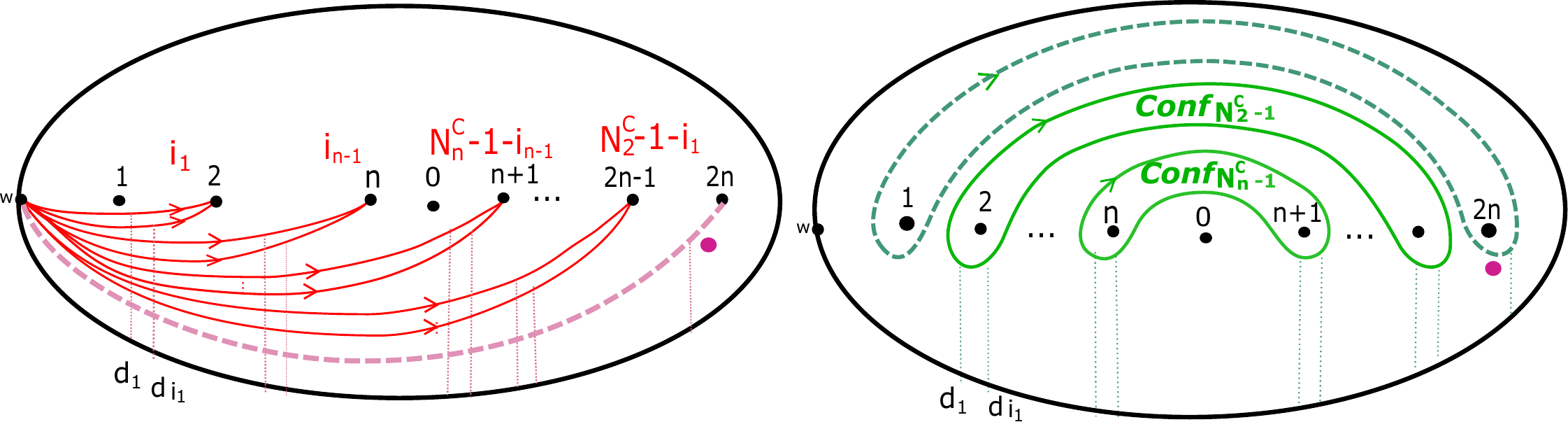}
\caption{\normalsize Removing the middle circle from the geometric support}\label{FFJ1}
\end{figure}

 Then we obtain the following relation between the intersection pairings (using $d^2$ for the contribution of the relative twisting, as in Remark \ref{rktw}):
$$ \left\langle(\beta_{n} \cup {\mathbb I}_{n+2} ) \ { \color{red} \FJ}, {\color{dgreen} \LJ}\right\rangle= d^{-2 \sum_{k=1}^{n-1}i_k}  \left\langle(\beta_{n} \cup {\mathbb I}_{n+2} ) \ { \color{red} \FJJ}, {\color{dgreen} \LJJ}\right\rangle $$
This means that we want to prove the following: 
\begin{equation}\label{d1}
\begin{aligned}
J_{N_1,...,N_l}(L,q)& =~ q^{ \sum_{i=1}^{l}\left( f_i- \sum_{j \neq i} lk_{i,j}\right)(N_i-1)} \cdot \\
 & \cdot \left(\sum_{\bar{i}\in C(\bar{N})} \prod_{i=2}^{n}x^{-1}_{C(i)} \cdot d^{-2 \sum_{k=1}^{n}i_k} \left\langle(\beta_{n} \cup {\mathbb I}_{n+2} ) \ { \color{red} \FJJ}, {\color{dgreen} \LJJ}\right\rangle \right)\Bigm| _{\sJt}.
\end{aligned}
\end{equation} 

{\bf Step 2}

 Next, we consider the homology classes given by the support of $\FJJ$ and $\LJJ$ where we remove the arc and circle that end or go around the last puncture, as in Figure \ref{Picture1}: 
\begin{figure}[H]
\centering
$${\color{red} \FJJJ \in \HJJJ} \ \ \ \ \ \ \ \ \ \ \text{ and } \ \ \ \ \ \ \ \ \ \ \ \ \ {\color{dgreen} \LJJJ \in \HJJJd} .$$
$$\hspace{5mm}\downarrow \text{ lifts }$$
\vspace{-6mm}

\includegraphics[scale=0.4]{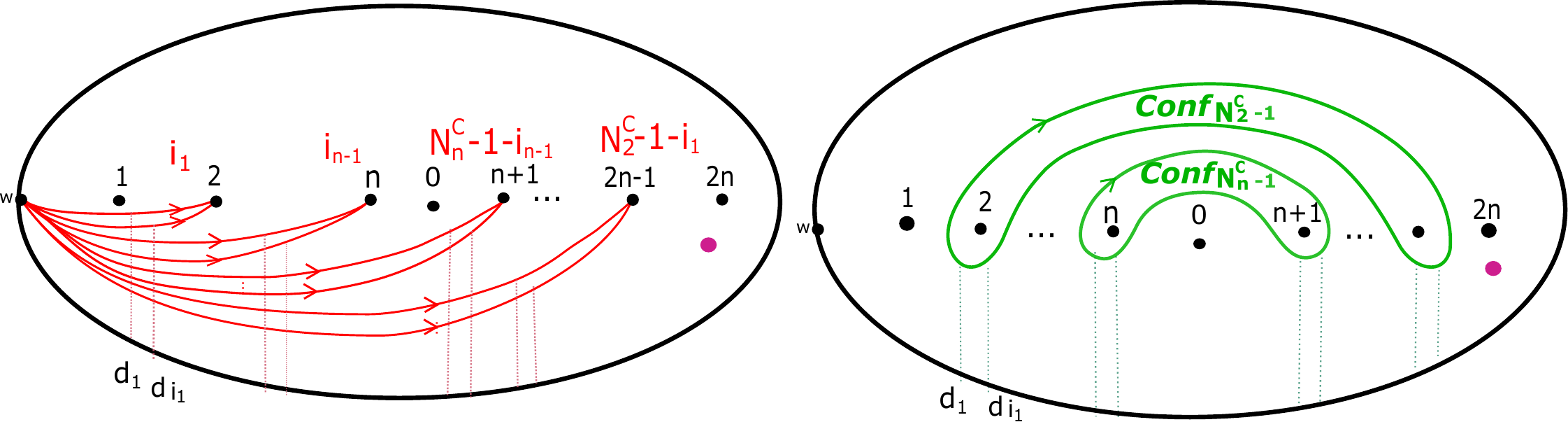}
\caption{\hspace*{-2mm} Removing the middle circle and the extremal circle from the geometric support}
\label{Picture1}
\end{figure}
Now, we compare two graded intersections: the one from Step 1, namely
$$\left\langle(\beta_{n} \cup {\mathbb I}_{n+2} ) \ { \color{red} \FJJ}, {\color{dgreen} \LJJ}\right\rangle$$ and the new pairing between the classes from Step 2, given by:

$$\left\langle(\beta_{n} \cup {\mathbb I}_{n+2} ) \ { \color{red} \FJJJ}, {\color{dgreen} \LJJJ}\right\rangle.$$
The only difference between these two intersections is that the first one comes with an extra coefficient associated to the intersection point between the extremal circle and the arc ending in the puncture labeled by $2n$, as in Figure \ref{compint}.
\begin{figure}[H]
\centering
$$\left\langle(\beta_{n} \cup {\mathbb I}_{n+2} ) \ { \color{red} \FJJ}, {\color{dgreen} \LJJ}\right\rangle  \ \ \ \ \ \ \ \ \ \ \ \ \ \ \text{ and } \ \ \ \ \ \ \ \ \ \  \ \ \ \ \ \left\langle(\beta_{n} \cup {\mathbb I}_{n+2} ) \ { \color{red} \FJJJ}, {\color{dgreen} \LJJJ}\right\rangle $$
\includegraphics[scale=0.4]{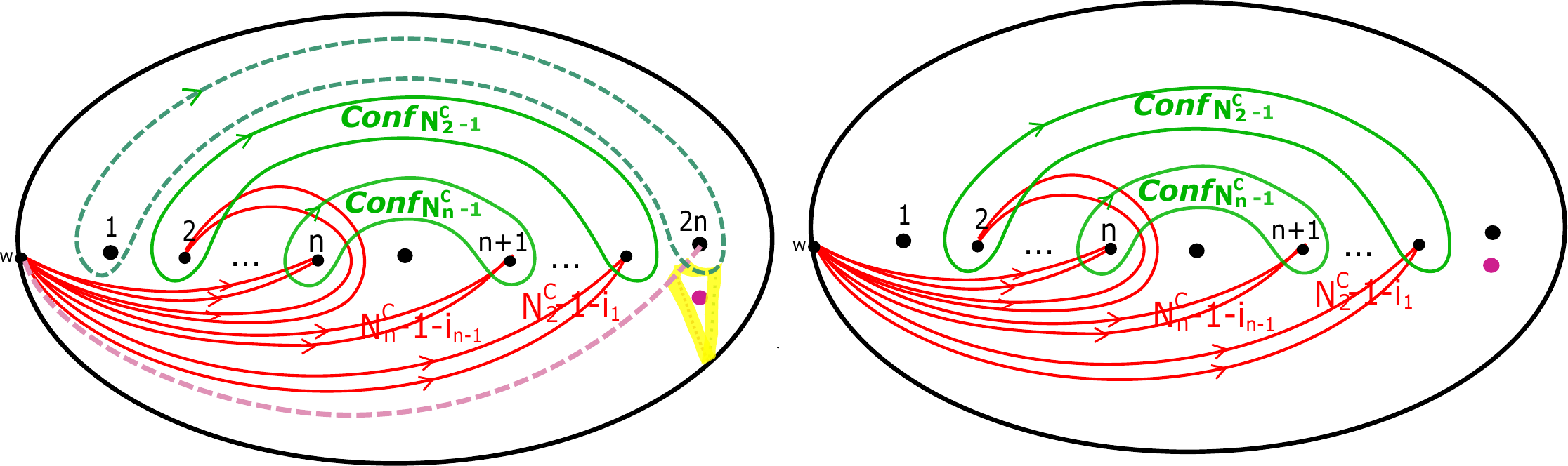}
\caption{\normalsize Removing the middle circle and the extremal circle from the geometric support}
\label{compint}
\end{figure}
This coefficient is the monodromy of the loop presented in the left hand side of Figure \ref{compint} (which goes around the $s$-puncture), which is $y$. So, we obtain the following relation between these two pairings: 
$$ \left\langle(\beta_{n} \cup {\mathbb I}_{n+2} ) \ { \color{red} \FJJ}, {\color{dgreen} \LJJ}\right\rangle= y  \left\langle(\beta_{n} \cup {\mathbb I}_{n+2} ) \ { \color{red} \FJJJ}, {\color{dgreen} \LJJJ}\right\rangle .$$
Now, we specialise these intersections using the change of coefficients $\sJt$. Using that $$\sJt(y)=[N^C_1]_q$$ we obtain the following relation:

$$ \left\langle(\beta_{n} \cup {\mathbb I}_{n+2} ) \ { \color{red} \FJJ}, {\color{dgreen} \LJJ}\right\rangle|_{\sJt}= [N^C_1]_{q} \cdot \left\langle(\beta_{n} \cup {\mathbb I}_{n+2} ) \ { \color{red} \FJJJ}, {\color{dgreen} \LJJJ}\right\rangle|_{\sJt}. $$

{\bf Step 3}

Now we use the definition of coloured Jones polynomials via the Reshetikhin-Turaev construction.  More precisely, the cups of the diagram on the algebraic side correspond exactly to the sum of the classes $\FJJJ$ over all choices of indices $\bar{i}\in C(\bar{N})$. 
Further on, the braid action on the quantum side and on the homological side correspond, using the identification due to Martel \cite{Martel}. 

{\bf Step 4}

On the last level of the braid closure, on the algebraic side, we evaluate the caps of the diagram. This means that we evaluate non-trivially just the components which are symmetric (excluding the first component, which is associated to the open strand).
On the homological side, this translates in the requirement that we evaluate non-trivially just the homology classes that are symmetric with respect to the puncture $0$, located in the middle of the disc. 

{\bf Step 5}

This condition tells us that the indices associated to the punctures $k$ and $2n+1-k$ should have the sum $\ccN_k-1$ (which is imposed by the colour of the link component associated to the puncture $k$). Geometrically we obtain this property by intersecting with the dual class $\LJJJ$. 

{\bf Step 6}

 Algebraically, we have to encode an additional piece of data, which is a coefficient corresponding to the caps of the diagram. This comes from the quantum trace from the representation side. The details of the argument for a single colour are discussed in details in \cite{Cr3} (Section 5 and Section 7). We refer to that argument and discuss below the difference that appears in this context, where we have different colours for our link.


The quantum trace encodes a coefficient that comes from the pivotal structure, which in this situation is given by the action of the element $K^{-1}$ of the quantum group (see \cite{Cr3} for the precise definition).

Let us fix a set of indices $i_1,...,i_{n-1}$ as above. The action of $K^{-1}$ on the associated monomial is the following: 
\begin{equation}\label{eq:scJ}
 q^{- \sum_{k=2}^{n}\left( (\ccN_k-1)-2i_{k-1}\right)}= \left( \prod_{k=2}^{n}q^{- (\ccN_k-1)} \right) \cdot  q^{ \ \sum_{k=1}^{n-1}2i_k}.
\end{equation}
Now we we would like to see this coefficient as coming from the specialisation of the variables that we use for the local system that gives our homology groups. For this, we remark the following:
\begin{equation}
\begin{aligned}
&\sJt(x_{C(i)})=q^{N_{C(i)}-1}=q^{N^C_i-1}, \forall i \in \{1,...,n\} \ (\text{following } \eqref{eq:N})\\
&\sJt(d^{-2})=q^{2}.
\end{aligned}
\end{equation}

This means that the coefficient from \eqref{eq:scJ} is precisely the specialisation:
\begin{equation}
\sJt \left(\left(\prod_{i=2}^{n}x^{-1}_{C(i)} \right) \cdot d^{-2 \sum_{k=1}^{n-1}i_k}\right).
\end{equation}
The last coefficient from the formula presented in \eqref{THJ} is given by the framing contribution of the components of our link $L$. This completes the intersection model for the coloured Jones polynomial $J_{N_1,...,N_l}(L,q)$ for the link coloured with colours $N_1,...,N_l$.
\end{proof}
\section{Coloured Alexander polynomials for framed links}\label{S:A}
In this part we prove a topological intersection model for the coloured Alexander polynomials for links.

This is the first topological model for these non-semisimple invariants for links whose components are coloured with different colours. For the case of links coloured with a single colour we showed a topological intersection model in \cite{Cr3} and \cite{Cr2}, where we used configuration spaces on figure eights and open circles respectively as the geometric supports for the second homology class. 

From the representation theory point of view, the change from the single colour case to the multi-colour case is totally non-trivial and requires the definition of so-called modified dimensions. In order to provide this model, we will encode geometrically this extra data, in our configuration space in the punctured disc.  

Even for the case where we colour by a single colour we obtain a new perspective, given by the fact that we have homology classes based on configuration spaces on circles rather than an open support given by the circles minus a puncture (as in \cite{Cr2}). This is achieved by our work on choosing a more subtle local system than the one from \cite{Cr2}. 

As in the previous section, we fix a framed oriented link $L=K_1 \cup ...\cup K_l$ with $l$ components, which has framings $f_1,...,f_l\in \Z$. Then we consider $\beta_n \in B_n$ to be a braid representative such that $L= \widehat{\beta_n}$.  Also, let $\cN \in \N$, $\cN \geq 2$. We  use the following notation for the set of generic colours: $\CCC:=(\C \setminus \Z) \cup \cN \Z$.

Let $\lambda_1,...,\lambda_l\in \CCC$ a set of colours for the strands of our link. The associated coloured multi-index is given by:
$$\bar{\lambda}:=(\lambda_1,...,\lambda_l).$$

\begin{notation}
For $\lambda \in \CCC$ let us denote by $U_{\lambda}$ the associated representation of the quantum group at roots of unity $U_{\xi_{\cN}}(sl(2))$  (here $\xi_{\cN}$ is our fixed primitive root of unity). 

Then, we colour the components of $L$ with the representations $U_{\lambda_1},...,U_{\lambda_l}$. Further, let us denote by $\Phi_{\cN}(L,\bar{\lambda})$ to be the coloured Alexander polynomial of the framed link $L$ (which was defined in \cite{ADO}).
\end{notation}

Now we want to define the local system such that our geometric support given by configuration spaces on circles leads to a well defined homology class in the homology of the associated covering space. 

\begin{defn}(Induced colorings)\label{colourings'}\\
a) (Colourings associated to the braid)  As in the case of the coloured Jones polynomials, the colours of the link $\bar{\lambda}$ induce a colouring of the strands of $\beta_n$. Let us denote the associated colours by:
$(C_1,...,C_n)$, as in Figure \ref{colouringbraid'}:
\begin{figure}[H]
\centering
\includegraphics[scale=0.45]{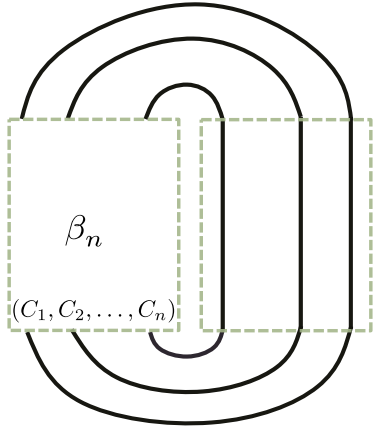}
\caption{\normalsize Colouring for the braid}
\label{colouringbraid'}
\end{figure}
 
Now, our link $L$ is the closure of $\beta_n$ together with $n$ straight strands. They induce an associated colouring of $2n$ points $C:\{1,...,2n\}\rightarrow \{1,...,l\}$. 
Let us denote by 
\begin{equation}\label{eq:N}
\ccl_i:=\lambda_{C(i)}.
\end{equation}

We consider the colouring of the $2n$ points:
$$\bar{C}^{\bar{\lambda}}:=(\ccl_1,...,\ccl_n,\ccl_n,...,\ccl_1).$$ 
\end{defn}
\begin{defn}(Set of states at level $\cN$) In this case, our indexing set will be given by the following set:
 $$C(\cN):=\{\bar{0},\dots,\overline{\cN-1}\}= \big\{ \bar{i}=(i_1,...,i_{n-1})\in \N^{n-1} \mid 0\leq i_k \leq \cN-1, \  \forall k\in \{1,...,n-1\} \big\}.$$ 
\end{defn}
We remark that in contrast to the case of coloured Jones polynomials, where the set of states depends on the choice of colours (as in Definition \ref{stJ}), here the set of states is intrinsec and it is determined just by the level and not by the particular choice of colours. 
\begin{defn}[$\cN$-coloured multi-level]
We consider the following indices:
\begin{equation}\label{eq:8'} 
\begin{cases}
&\cN_1=1,\\
&\cN_i=\cN-1, i\in \{2,...,n\}.
\end{cases}
\end{equation}

Then, we define $\cN$-coloured multi-level (which has $n$ components):
\begin{equation}
\bar{\cN}:=(1,\cN-1,...,\cN-1).
\end{equation}
\end{defn}

\subsection{Homology classes}

In the next part we define the homology classes that we work with for this second case: invariants at roots of unity. In contrast to the situation for coloured Jones polynomials, where we used the induced colouring in order to define the homology groups, here we need just the level $\cN$ in order to define these homologies. 

More specifically, let us consider the configuration space of $$2+(n-1) (\cN-1)$$ points on the $(2n+2)$-punctured disc.


 After this, we look at the local system $\Phi^{\bar{\cN}}$ associated to the following parameters:
$$ n \rightarrow n; \ \ \ m\rightarrow 2+(n-1) (\cN-1); \ \ \ {\bar{l}}\rightarrow 1; \ \ \ \bar{\cN}.$$ 
We fix the homology groups associated to these parameters, as below:
$$ \HA \ \ \ \ \ \ \ \ \ \ \ \ \ \ \ \text{ and }\ \ \ \ \ \ \ \ \ \ \ \ \ \ \HAd.$$
As in the previous section, we erase the third component associated to $\bar{l}$ from the indices of the homology groups.

Now we have all the tools needed in order to introduce the homology classes that we use for the intersection model for coloured Alexander polynomials. 

\begin{defn} (Homology classes for ADO invariants)\\
Let us fix a multi-index $\bar{i}=(i_1,...,i_{n}) \in C(\cN)$. We define the following homology classes associated to this multi-index, given by the geometric supports from Figure \ref{Picture0'}:
 \begin{figure}[H]
\centering
$${\color{red} \FA \in \HA} \ \ \ \ \ \ \ \ \ \ \text{ and } \ \ \ \ \ \ \ \ \ \ \ \ \ {\color{dgreen} \LA \in \HAd} .$$
$$\hspace{5mm}\downarrow \text{ lifts }$$
\vspace{-6mm}

\includegraphics[scale=0.4]{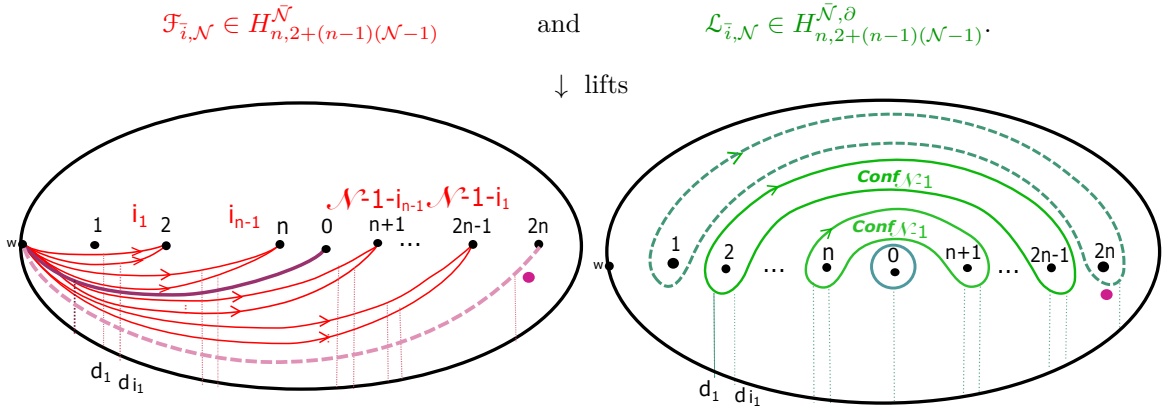}
\caption{\normalsize Lagrangians for Coloured Alexander link invariants coloured with diferent colours}
\label{Picture0'}
\end{figure}
\end{defn}
\begin{prop}
 The geometric support presented in Figure \ref{Picture0'}, induces a well-defined homology class in the covering, which we denoted by $\LA \in \HAd$.
\end{prop}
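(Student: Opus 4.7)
The plan is to mirror the argument used just above for the coloured Jones class $\LJ$, adapted to the level $\bar{\cN} = (1,\cN-1,\dots,\cN-1)$ (equivalently the ``$\cN$-coloured multi-level'' associated to the ADO setting). First I would identify the geometric support of $\LA$ in Figure \ref{Picture0'} with the submanifold $L(\bar{\cN})$ of Definition \ref{gendsupp}: the right-hand side of the picture is precisely the image in $\Conf_{2+(n-1)(\cN-1)}(\mathbb D_{2n+2})$ of a product of configuration spaces on the $n$ symmetric ovals (with multiplicities $1,\cN-1,\dots,\cN-1$) together with a small circle around the $q$-puncture. This is exactly the shape of geometric support covered by Lemma \ref{gendsuppl}.

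Next, I would invoke Lemma \ref{gendsuppl} directly: for a multi-level $\bar{\cN}$ with $\cN_1=1$ and $\cN_i=\cN-1$ for $i\geq 2$, the total monodromy of a loop consisting of $\cN_i$ particles traversing the $i^{\mathrm{th}}$ symmetric oval is, after applying the change of variables $f^{\bar{\cN}}$,
\[
\cN_i\,x_i - \cN_i(x_i+(\cN_i-1)d') + \tfrac{\cN_i(\cN_i-1)}{2}\cdot 4d' = 0,
\]
and the monodromy around the $q$-puncture is trivial by construction of $\Phi$. So the local system $\bar{\Phi}^{\bar{\cN}}$ vanishes on each generator of the fundamental group of $L(\bar{\cN})$, whence $L(\bar{\cN})$ lifts to the level $\bar{\cN}$ covering space $\bar{C}^{\bar{\cN}}_{n,m_A(\cN)}$.

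To pin down a \emph{specific} lift (and hence a specific homology class depending on $\bar{i}\in C(\cN)$), I would follow the recipe of Notation \ref{paths}: take the collection of straight vertical arcs connecting each oval (and the central circle) to the boundary of the disc, as drawn in Figure \ref{Picture0'}. This collection assembles into a single path in the base configuration space from $\mathbf{d}$ to a chosen basepoint on $L(\bar{\cN})$; lifting this path through $\tilde{\mathbf{d}}$ selects a lift $\tilde L(\bar{\cN})_{\bar i}$ of the submanifold. The multi-index $\bar i$ enters through the number of particles distributed on each oval on each side of the vertical axis (exactly as for $\LJ$ in the coloured Jones case).

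Finally, since $\tilde L(\bar{\cN})_{\bar i}$ sits inside the part of the boundary which is away from $P^{-}$ and meets the diagonal only along collisions, it defines a class in the Borel--Moore, boundary-relative homology of $\bar{C}^{\bar{\cN}}_{n,m_A(\cN)}$, i.e.\ in $\HAdi$; after specialising coefficients by $f_C$ (Definition \ref{D:4}) we land in $\HAd$, giving the required well-defined class $\LA$. The only mildly subtle point in executing this is bookkeeping the sign conventions induced by the change of variable $\eta$ and the passage from $d'$ to $d$ when comparing with the formula of Proposition \ref{P:3}, but this is routine and contained entirely in Lemma \ref{gendsuppl}.
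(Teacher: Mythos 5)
Your proposal follows essentially the same route as the paper: identify the support in Figure \ref{Picture0'} with $L(\bar{\cN})$ from Definition \ref{gendsupp}, invoke Lemma \ref{gendsuppl} for the vanishing of the total monodromy under $\bar{\Phi}^{\bar{\cN}}$, and fix the specific lift $\tilde{L}(\bar{\cN})_{\bar{i}}$ via the paths of Notation \ref{paths} through the base point $\tilde{\mathbf{d}}$. This is correct and matches the paper's argument.
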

\begin{proof}
We remark that the geometric support for the dual class from Figure \ref{Picture0'} is the the geometric support from Definition \ref{gendsupp}, associated to the multi-level $\bar{\cN}.$
 
This was denoted by $L(\bar{\cN})$. Then, using Lemma \ref{gendsuppl} we know that this submanifold gives well-defined lifts in the associated covering space. 
 
As in the previous section, we use the procedure of lifting described in Notation \ref{paths}. This means that the collection of straight vertical paths which connect the ovals to the boundary of the disc gives us a path in the configuration space, from ${\bf d}$ to $L(\bar{\cN})$, shown in the right hand side of Figure \ref{Picture0'} (see \cite{Crsym}-Section 5 for the procedure of constructing such a path).  
  
The base point of the covering, $\tilde{{\bf d}}$, gives us a base point for lifting the submanifold, and we obtain a lift denoted by: $$\tilde{L}(\bar{\cN})_{\bar{i}}.$$

From Lemma \ref{gendsuppl}, this submanifold gives a well-defined homology class in the dual homology $\HAd$. We denote this class by:
$$\LA \in \HAd.$$

\begin{figure}[H]
\centering
\hspace{-5mm}\includegraphics[scale=0.37]{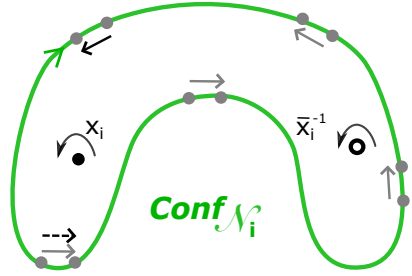}
\caption{\normalsize Monodromy of configurations on a circle for the coloured Alexander classes}
\label{PicturemonJ}
\end{figure}
This concludes the proof of the existence of the well-defined homology class.
\end{proof}
Now, let us fix the indices:
 \begin{equation}
\begin{cases}
\bar{\lambda}:=(\lambda_1,...,\lambda_l)\\
\overline{d(\lambda)}:=([\lambda_{C(1)}]_{\xi_{\cN}})\\
\end{cases}
\end{equation}

We then consider the specialisation of coefficients (which is associated to the multi-indices $\bar{\alpha}=\bar{\lambda}$ and $\eta=\overline{d(\lambda)}$), as in Definition \ref{pA2}:

$$ \sAt: \C[u_1^{\pm 1},...,u_{l}^{\pm 1},x_1^{\pm 1},...,x_{l}^{\pm 1},y^{\pm 1}, d^{\pm 1}] \rightarrow \C[\xi_{\cN}^{\pm \frac{1}{2}},\xi_{\cN}^{\pm \frac{\lambda_1}{2}},...,\xi_{\cN}^{\pm \frac{\lambda_l}{2}}]$$
\begin{equation}\label{eq:8''''} 
\begin{cases}
&\sAt(u_j)=\left(\sJt(x_j)\right)^{1-\cN}=\xi_{\cN}^{(1-\cN)\lambda_i}\\
&\sAt(x_i)=\xi_{\cN}^{\lambda_i}, \ i\in \{1,...,l\}\\
&\sAt(y)=([\lambda_{C(1)}]_{\xi_{\cN}}),\\
&\sAt(d)= \xi_{\cN}^{-1}.
\end{cases}
\end{equation}

\subsection{Intersection model} \label{modelA}

In this section we will prove the topological model for the coloured Alexander polynomials of a link coloured with the colours $\lambda_1,...,\lambda_l$. We show that this multivariable invariant can be read off from an intersection pairing via the homology classes $$\{ \FA, \LA\}  \text { for all indices } \bar{i} \in \{\bar{0},\dots,\overline{\cN-1}\}.$$ 
We recall the model presented in Theorem \ref{THEOREMA}.
\begin{thm}[Topological model for coloured Alexander polynomials for coloured links via ovals]\label{THEOREMAP}
We have the following topological model for a link coloured with colours $\bar{\lambda}$:
\begin{equation}
\begin{aligned}
\Phi^{\cN}_{\la}(L)& =~ \left(\PA \right)\Bigm| _{\sAt}.
\end{aligned}
\end{equation} 

Here, the intersection is given by:

\begin{equation}
\begin{cases}
& \PA \in \Z[u_1^{\pm 1},...,u_l^{\pm 1},x_1^{\pm 1},...,x_l^{\pm 1},y^{\pm 1}, d^{\pm 1}]\\
& \PA:=\prod_{i=1}^l u_{i}^{ \left(f_i-\sum_{j \neq {i}} lk_{i,j} \right)} \cdot \prod_{i=2}^{n} u^{-1}_{C(i)} \cdot \\
 & \ \ \ \ \ \ \ \ \ \ \ \ \ \ \ \ \ \ \ \sum_{\bar{i}\in\{\bar{0},\dots,\overline{\cN-1}\}}  \left\langle(\beta_{n} \cup {\mathbb I}_{n+2} ) \ { \color{red} \FA}, {\color{dgreen} \LA}\right\rangle. 
   \end{cases}
\end{equation}

\end{thm}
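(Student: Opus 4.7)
The plan is to mirror the six-step proof of Theorem \ref{THEOREMJ}, peeling the geometric supports one layer at a time, while isolating the two genuinely new features of the ADO case: (i) the encoding of the \emph{modified quantum dimension} via monodromy around the $s$-puncture, and (ii) the fact that the state set $C(\cN)$ is intrinsic (depends only on $\cN$), which forces the pivotal correction to be read off the prefactor $\prod u_i^{-1}$ rather than from the specialisation of the local system.

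\textbf{Step 1 (Peeling the $q$-puncture support).} I would first remove the central $1$-dimensional piece (the arc through, and the small circle around, the puncture $0$) from both $\FA$ and $\LA$, obtaining simpler classes $\FAA \in \HAA$ and $\LAA \in \HAAd$. By the formula of Proposition \ref{P:3}, the only effect on the graded intersection is the relative winding of the $\sum_k i_k$ particles on the inner oval against the lone particle on the $q$-circle, which the local system $\bar{\Phi}^{\bar{\cN}}$ sees as a power of $d$. This yields
\begin{equation*}
\langle (\beta_n \cup \mathbb{I}_{n+1})\,\FA,\LA \rangle \;=\; d^{-4\sum_{k=1}^{n-1} i_k}\,\langle (\beta_n \cup \mathbb{I}_{n+1})\,\FAA,\LAA \rangle.
\end{equation*}

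\textbf{Step 2 (Modified dimension from the $s$-puncture).} This is the decisive new step. I would next peel the extremal arc ending at puncture $2n$ and the outer oval around it, producing $\FAAA,\LAAA$. The difference between the two pairings is exactly the monodromy of the loop encircling the $s$-puncture once, which $\bar{\Phi}^{\bar{\cN}}$ evaluates to $y$. After specialising by $\sAt$ this factor becomes $[\lambda_{C(1)}]_{\xi_{\cN}}$, which is precisely the normalisation extracted in the ADO construction when one closes the outermost strand via the modified trace on $U_{\lambda_{C(1)}}$. This is the step that requires the greatest care, and is where the geometric role of the $s$-puncture emphasised in Subsection \ref{ssmodif} has to be matched bijectively with the modified trace on the representation-theoretic side.

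\textbf{Steps 3--5 (State sum, braid action, and symmetry from caps).} At this stage the argument becomes parallel to Theorem \ref{THEOREMJ}: I would use Martel's homological model to identify $\sum_{\bar{i}\in C(\cN)} \FAAA$ with the image under the Reshetikhin--Turaev functor of the sequence of cups that start the ADO state sum on the weight space of weight $(n-1)(\cN-1)$; use Proposition \ref{colbr} to identify the mapping class group action on $\HAA$ with the $U_{\xi_{\cN}}(\mathfrak{sl}_2)$-action of $\beta_n$; and observe that pairing with $\LAAA$ selects exactly those multi-indices whose symmetric complement around the puncture $0$ has the complementary weight, which is precisely the non-vanishing condition imposed by the caps. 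A crucial point specific to the ADO setting is that the state set $C(\cN)$ is the \emph{full} cube $\{0,\dots,\cN-1\}^{n-1}$, matching the generically semisimple decomposition of tensor products of $U_{\lambda_i}$ at a root of unity, rather than the colour-dependent set that arose for coloured Jones in Definition \ref{stJ}.

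\textbf{Step 6 (Pivotal structure and framing).} It remains to recover the coefficient contributed by the pivotal structure and the framing correction. For the non-semisimple ADO theory the pivotal element acts on $U_{\lambda}$ as $K^{1-\cN}$, producing a scalar $\xi_{\cN}^{(1-\cN)\lambda_{C(i)}}$ for each closed strand $i$; under $\sAt$ this is exactly $\sAt(u_i) = \xi_{\cN}^{(1-\cN)\lambda_i}$, which matches the prefactor $\prod_{i=2}^{n} u_{C(i)}^{-1}$. The prefactor $\prod_{i=1}^{l} u_i^{f_i - \sum_{j\neq i} lk_{i,j}}$ then accounts for the framing anomaly of the ADO invariant under braid closure. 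Putting all six steps together yields the claimed identity, with Step 2 being the main obstacle: one must verify that the single variable $y$, specialised at a root of unity, really does recover the modified quantum dimension in every component and every level, including the non-trivial compatibility with the braid action of Step 4.
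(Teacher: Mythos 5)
Your proposal follows essentially the same six-step route as the paper's proof: peeling the central support around the puncture $0$ (yielding the power $d^{-4\sum_k i_k}$), peeling the extremal arc and oval (yielding the factor $y$, which specialises to the modified dimension $[\lambda_{C(1)}]_{\xi_{\cN}}$), matching the remaining state sum with the Reshetikhin--Turaev construction via Martel's identification, reading the cap/symmetry conditions off the dual class, and accounting for the pivotal element and framing exactly as the paper does. The only point you pass over lightly is the one the paper singles out as the genuinely new representation-theoretic verification in the multi-colour case: that the specialised braiding at $\xi_{\cN}$ preserves tensor products of the $\cN$-dimensional truncations of Verma modules of \emph{different} highest weights (a direct R-matrix computation generalising \cite[Lemma 3.1.7]{Cr3}), which must be checked before the identification with the homological braid action can be invoked.
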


\begin{proof}
First of all, we look at the change of coefficients described in Section \ref{S:not}, and using this we want to show the following:
\begin{equation}\label{formA}
\begin{aligned}
\Phi^{\cN}_{\la}(L)& =~ {\xi_{\cN}}^{ \sum_{i=1}^{l}\left( f_i- \sum_{j \neq i} lk_{i,j}\right)\lambda_i(1-N_i)} \cdot \\
 & \cdot \left(\sum_{\bar{i}\in\{\bar{0},\dots,\overline{\cN-1}\}} \left( \prod_{i=2}^{n}u^{-1}_{C(i)} \right)\cdot  \left\langle(\beta_{n} \cup {\mathbb I}_{n+2} ) \ { \color{red} \mathscr F_{\bar{i},\cN}}, {\color{dgreen} \mathscr L_{\bar{i},\cN}}\right\rangle \right)\Bigm| _{\sAt}.
\end{aligned}
\end{equation}

We will prove this model in $6$ main parts. The first two steps are purely topological and for the rest we use the connection to representation theory. The strategy has some similarities to the one used for the model for coloured Jones polynomials from the previous section, but there are certain important topological differences, which we will emphasise in the next part.

{\bf Step 1} 

We consider the homology classes obtained from the geometric supports of the classes $\FJ$ and $\LJ$, but where we remove the 1-dimensional part supported around the puncture of the disc labeled by $0$ (this means that we remove the purple segment and the blue central circle), as in Figure \ref{FJ1}:

$${\color{red} \FA' \in \HAA} \ \ \ \ \ \ \ \ \ \ \text{ and } \ \ \ \ \ \ \ \ \ \ \ \ \ {\color{dgreen} \LA' \in \HAAd}$$
\begin{figure}[H]
\centering
\vspace{-1mm}
\includegraphics[scale=0.4]{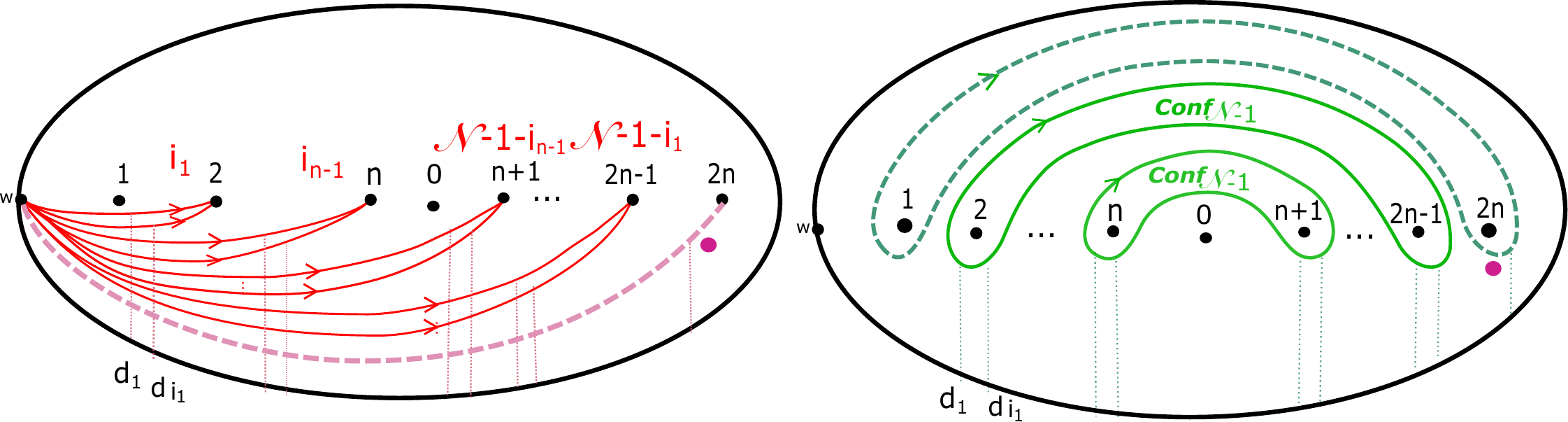}
\caption{\normalsize Removing the middle circle from the geometric support}\label{FJ1}
\end{figure}

Counting the contribution of the intersection point between the circle and the middle arc, this gives the following relation between the intersection pairings (using $d^2$ for the contribution of the relative twisting, as in Remark \ref{rktw}):
\begin{equation}\label{d2}
\left\langle(\beta_{n} \cup {\mathbb I}_{n+2} ) \ { \color{red} \FA}, {\color{dgreen} \LA}\right\rangle= d^{-2 \sum_{k=1}^{n-1}i_k}  \left\langle(\beta_{n} \cup {\mathbb I}_{n+2} ) \ { \color{red} \FAA}, {\color{dgreen} \LAA}\right\rangle 
\end{equation}
This means that we want to prove the following: 
\begin{equation}
\begin{aligned}
\Phi^{\cN}_{\la}(L)& =~ {\xi_{\cN}}^{ \sum_{i=1}^{l}\left( f_i- \sum_{j \neq i} lk_{i,j}\right)\lambda_i(1-N_i)} \cdot \\
 & \cdot \left(\sum_{\bar{i}\in\{\bar{0},\dots,\overline{\cN-1}\}} \left( \prod_{i=2}^{n}u^{-1}_{C(i)} d^{-2 \sum_{k=2}^{n}i_k}\right)\cdot  \left\langle(\beta_{n} \cup {\mathbb I}_{n+2} ) \ { \color{red} \FAA}, {\color{dgreen} \LAA}\right\rangle \right)\Bigm| _{\sAt}.
\end{aligned}
\end{equation} 

{\bf Step 2} 

Next, we consider the homology classes given by the support of $\FJJ$ and $\LJJ$ where we remove the arc and circle that end or go around the last puncture, as in Figure \ref{Picture1'}: 
\begin{figure}[H]
\centering
$${\color{red} \FAAA \in \HAAA} \ \ \ \ \ \ \ \ \ \ \text{ and } \ \ \ \ \ \ \ \ \ \ \ \ \ {\color{dgreen} \LAAA \in \HAAAd} .$$
$$\hspace{5mm}\downarrow \text{ lifts }$$
\vspace{-6mm}

\includegraphics[scale=0.4]{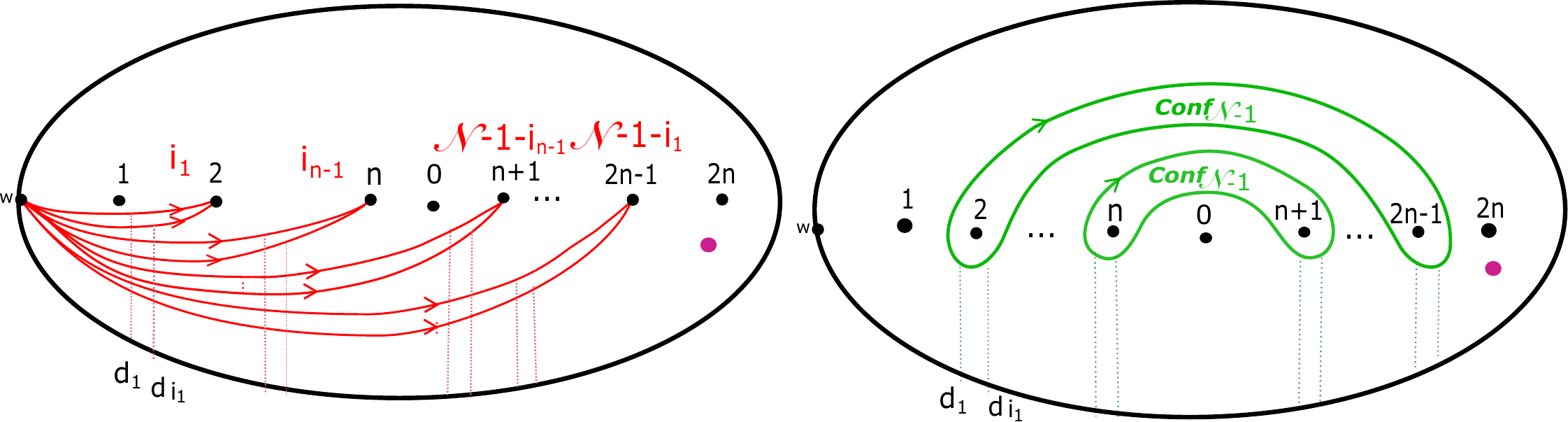}
\caption{\normalsize Removing the middle circle and the extremal circle from the geometric support}
\label{Picture1'}
\end{figure}
In this step we want to compare the graded intersection from Step I, given by
$$\left\langle(\beta_{n} \cup {\mathbb I}_{n+2} ) \ { \color{red} \FAA}, {\color{dgreen} \LAA}\right\rangle$$ and the new pairing between the classes from Figure \ref{Picture1'}, given by:

$$\left\langle(\beta_{n} \cup {\mathbb I}_{n+2} ) \ { \color{red} \FAAA}, {\color{dgreen} \LAAA}\right\rangle.$$
The unique difference between the intersections of the underlying pairs of submanifolds comes from the fact that the first pair of submanifolds has an extra intersection point between the extremal circle and the arc that ends in the puncture $2n$, as in Figure \ref{compintA}:
\begin{figure}[H]
\centering
$$\left\langle(\beta_{n} \cup {\mathbb I}_{n+2} ) \ { \color{red} \FAA}, {\color{dgreen} \LAA}\right\rangle  \ \ \ \ \ \ \ \ \ \ \ \ \ \ \text{ and } \ \ \ \ \ \ \ \ \ \  \ \ \ \ \ \left\langle(\beta_{n} \cup {\mathbb I}_{n+2} ) \ { \color{red} \FAAA}, {\color{dgreen} \LAAA}\right\rangle $$
\includegraphics[scale=0.4]{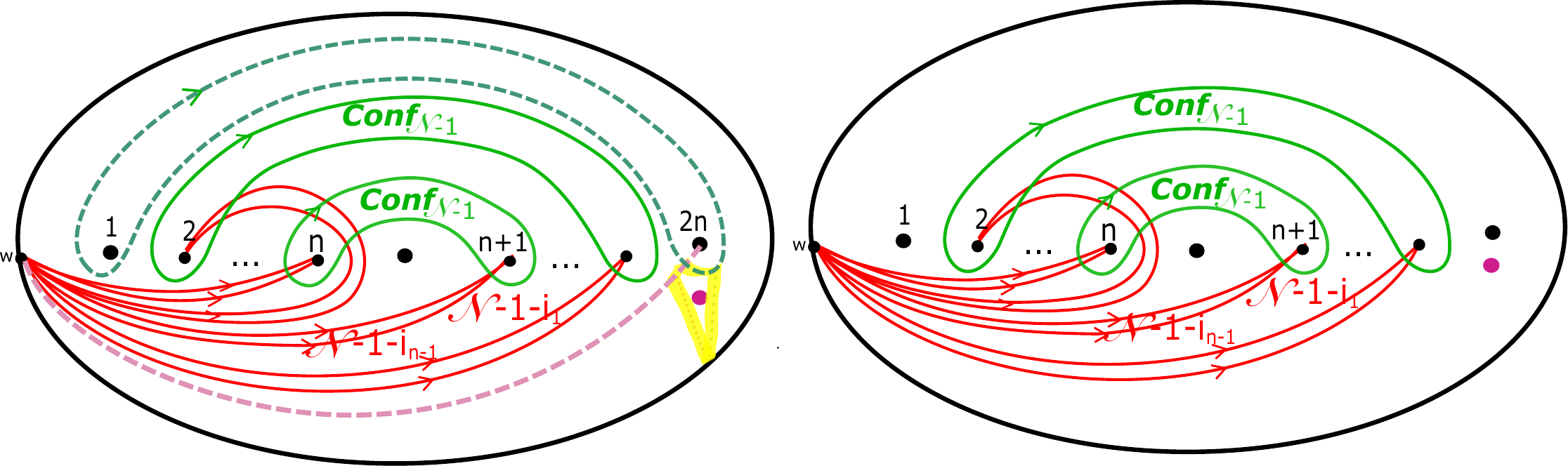}
\caption{\normalsize Removing the middle circle and the extremal circle for the coloured Alexander polynomials}
\label{compintA}
\end{figure}

That means that the associated intersection pairing has an extra coefficient coming from the contribution of this intersection point.

This is a computation which is very similar to the analog one presented in {\bf Step 2} for the case of the Coloured Jones polynomials (which we saw in the previous section).

 This coefficient comes from the monodromy of the loop around the $s$-puncture, presented in the left hand side of Figure \ref{compintA}, which gets evaluated to the variable $y$.

Then, we have the following relation between the two intersection pairings: 
$$ \left\langle(\beta_{n} \cup {\mathbb I}_{n+2} ) \ { \color{red} \FAA}, {\color{dgreen} \LAA}\right\rangle= y  \left\langle(\beta_{n} \cup {\mathbb I}_{n+2} ) \ { \color{red} \FAAA}, {\color{dgreen} \LAAA}\right\rangle $$
Then, let us specialise these intersections via the change of variables $\sAt$. We recall that: $$\sAt(y)=d(\lambda_{C(1)})=[\lambda_{C(1)}]_{\xi_{\cN}}.$$ So, we obtain the following relation:

$$ \left\langle(\beta_{n} \cup {\mathbb I}_{n+2} ) \ { \color{red} \FAA}, {\color{dgreen} \LAA}\right\rangle|_{\sAt}= [\lambda_{C(1)}]_{\xi_{\cN}} \cdot \left\langle(\beta_{n} \cup {\mathbb I}_{n+2} ) \ { \color{red} \FAAA}, {\color{dgreen} \LAAA}\right\rangle|_{\sAt} $$

{\bf Step 3} 

Now we turn our attention to the definition of the coloured Alexander invariants, coming from representation theory. We will use the version of the quantum group $U_q(sl(2))$ over two variables, which has the divided powers of one of the generators. We refer to \cite[Section 3]{Cr3} for the definition of this quantum group, its Verma modules and representation theory. 

We showed in \cite[Section 3]{Cr3} that we can specialise this representation theory at roots of unity and obtain the coloured Alexander invariant for a link whose components are coloured with the same colour.
We did this by first considering the subspace generated by the first $\cN$ weight vectors in the Verma module. Then, we noticed that when we specialise at the root of unity $\xi_{\cN}$, this subspace does not lead to a well-defined sub-representation over the quantum group $U_{\xi_{\cN}}(sl(2))$. However, the key point is that the tensor product of this specialised module is preserved by the specialised braid group action. 

This was shown in \cite[Lemma 3.1.7]{Cr3}, and it relies on the computation of the R-matrix action onto a tensor product of two vectors $v_i\otimes v_j$ from the Verma module together with the fact that when specialised at roots of unity we have a vanishing of certain quantum factorials. That ensures that if $i,j \leq \cN-1$, then the action of the specialised braiding on $v_i\otimes v_j$ will be a linear combination of $v_{i-n}\otimes v_{j+n}$ where all the indices remain bounded by $\cN$.

For our situation, namely for the coloured Alexander invariants for links, we need to work with different colours. This means that we have to consider tensor products of modules associated to different highest weights $\lambda_1$ and $\lambda_2$.

In a nutshell, the main property that changes and needs to be checked concerns the action of the braiding on such two Verma modules that have weights $\lambda_1$ and $\lambda_2$. 

 We notice that also for this situation we can generalise the argument from  \cite[Lemma 3.1.7]{Cr3}. A direct computation of the coefficients coming from the R-matrix shows that we have the property that the tensor power of finite dimensional subspaces of dimension $\cN$ associated to Verma modules of different weights are preserved by the braid group action, {\bf when specialised to the root of unity $\xi_{\cN}$}.

This means that we can obtain the coloured Alexander polynomial of the link coloured with different colours via the Reshetikhin-Turaev construction applied to this version of the quantum group. More precisely, looking on the bottom of the diagram, we have that the cups evaluated on the algebraic side correspond exactly to the sum classes $\FAAA$ over all choices of indices $\bar{i}\in \{\bar{0},...,\overline{\cN-1}\}$. 
After that, we use the property that the braid action on the quantum side and on the homological side are isomorphic, via the identification showed by Martel in \cite{Martel}. 

{\bf Step 4}

The upper part of the diagram, given by the set of caps is associated on the algebraic side to the evaluations. This means that we evaluate non-trivially just the tensor powers of vectors from the specialised Verma module which satisfy the following two requirements:
\begin{itemize}
\item[•] components are all strictly less than $\cN$ 
\item[•] the indices of the components are symmetric with respect to the colour $\cN$, meaning that they have the sum $\cN-1$ (if we remove the first component, because this is associated to the open strand).
\end{itemize}

Now, we translate the meaning of these evaluation conditions on the homological side. 

We notice that this condition gives us the requirement that we have to evaluate non-trivially just the homology classes which:
\begin{itemize}
\item[•] have all multiplicities of the multi-indices strictly less than $\cN$ 
\item[•] the number of arcs that end in symmetric punctures with respect to $0$ (which is in the middle of the punctured disc) adds up to $\cN-1$, excluding the components associated to the puncture labelled by $1$ (they are associated to the open strand).
\end{itemize}

{\bf Step 5}

Looking more closely at this requirement, we see that it enforces that we evaluate non-trivially just the classes whose indices associated to the p-punctures $k$ and $2n+1-k$ have the sum $\cN-1$. The main point is that we are able to obtain this condition in a geometric way, by intersecting with the dual class $\LAAA$.

{\bf Step 6}

 Algebraically, we have to encode an additional piece of data, which is a coefficient corresponding to the caps of the diagram. On the representation theory side, this is provided by the quantum trace. The details of the argument for the case where we have just a single colour are discussed in \cite[(Section 5, Section 8)]{Cr3}. We will proceed with an analog argument, so we present below the main differences that appears in this new situation, given by the choice of different colours for our link.

The quantum trace is a trace that is twisted by certain coefficients that come from the pivotal structure. In this case, the pivotal structure is provided by the action of the element $K^{\cN-1}$ of the quantum group (in \cite{Cr3} we present in details the precise definition of this quantum trace).

Now, we fix a set of indices $i_1,...,i_{n-1}$ which are strictly bounded by $\cN$. On the algebraic side, the action of the element $K^{\cN-1}$ on the associated monomial has the following coefficient: 
\begin{equation}\label{eq:scA}
 {\xi_{\cN}}^{(\cN-1) \sum_{k=2}^{n}\left( (\lambda_{C(k)})-2i_{k-1}\right)}= \left( \prod_{k=2}^{n}{\xi_{\cN}}^{ (\cN-1)\lambda_{C(k)}} \right) \cdot  {\xi_{\cN}}^{ \ \sum_{k=1}^{n-1}2i_k}.
\end{equation}
In the following part, we encode this coefficient using the specialisation of variables which are used for the local system. 

We recall the following property:
\begin{equation}
\begin{aligned}
&\sAt(u_{C(i)})={\xi_{\cN}}^{(1-\cN)\lambda_{C(i)}}, \forall i \in \{1,...,n\} \ (\text{following } \eqref{eq:N})\\
&\sAt(d^{-2})=\xi_{\cN}^{2}.
\end{aligned}
\end{equation}
Using this, we obtain that the coefficient from \eqref{eq:scA} is precisely the following specialisation:
\begin{equation}
\sAt \left(\left(\prod_{i=2}^{n}u^{-1}_{C(i)} \right) \cdot d^{-2 \sum_{k=1}^{n-1}i_k}\right).
\end{equation}

It remains one last coefficient in the formula presented in \eqref{formA}. This comes from the framing contribution of the components of the link $L$.

Putting all of these together, we conclude the intersection model for the coloured Alexander polynomial of the link $L$ coloured with colours $\lambda_1,...,\lambda_l$.
\end{proof}

\section{Unifying all Coloured Jones polynomials with colours bounded by $\cN$}\label{SAu}

In this part we put together the provious models for the coloured Jones polynomials for knots and we will show that if we fix $\cN$, then the intersection at level $\cN$ recovers all coloured Jones polynomials of levels less than $\cN$, as presented in Theorem \ref{THEOREMAU} which we remind below.

\

\begin{thm}[Unifying coloured Jones polynomials of bounded level]\label{THEOREMAU'}

\

Let us fix a level $\cN\in \N$. Then the state sum of Lagrangian intersection in the {\bf configuration space of $(n-1)(\cN-1)+2$} points in the punctured disc:
\begin{equation}
\PA \in \Z[u^{\pm 1},x^{\pm 1},y^{\pm 1}, d^{\pm 1}]
\end{equation} 
(given by the set of Lagrangian intersections $\{\langle (\beta_{n} \cup {\mathbb I}_{n+2} ) \ { \color{red} \FA}, {\color{dgreen} \LA} \rangle\}_{\bar{i}=(i_1,...,i_{n}) \in \{\bar{0},\dots,\overline{\cN-1}}$) 

recovers all coloured Jones polynomials of colours $\cM\leq \cN$, as below:
\begin{equation}
\begin{aligned}
J_{\cM}(K)& =~ \PA \Bigm| _{\psi_{\cM}}.
\end{aligned}
\end{equation} 
\end{thm}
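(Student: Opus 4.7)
The plan is to reduce the claim to an appropriate application of Theorem \ref{THEOREMA} at level $\cM$, by partitioning the state sum at level $\cN$ and analysing each part separately under the specialisation $\sAtu$ (the analogue of $\sAt$ in which $\xi_{\cN}$ is replaced by $\xi_{\cM}$). Concretely, using the partition of multi-indices from equation \eqref{mi}, I would write
$$\PA = \PA^{(1)} + \PA^{(2)},$$
where $\PA^{(1)}$ collects the state sum terms over $\bar i \in \{\bar 0,\dots,\overline{\cM-1}\}$ and $\PA^{(2)}$ collects those over $\bar i \in \{\bar{\cM},\dots,\overline{\cN-1}\}$. The claim to establish is that $\PA^{(2)}\bigm|_{\sAtu}=0$ while $\PA^{(1)}\bigm|_{\sAtu}$ coincides with the level $\cM$ intersection $\PAM\bigm|_{\sAt}$, which by Theorem \ref{THEOREMA} equals $\Phi^{\cM}_{\la}(L)$.

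For the vanishing of $\PA^{(2)}$, I would re-run the Reshetikhin-Turaev argument of Steps 3-6 in the proof of Theorem \ref{THEOREMA}, but now with the specialisation sending $x_i\mapsto \xi_{\cM}^{\lambda_i}$, $d\mapsto \xi_{\cM}^{-1/2}$ and $y\mapsto [\lambda_{C(1)}]_{\xi_{\cM}}$. The key point is the variant of \cite[Lemma 3.1.7]{Cr3} that I would generalise to different colours: at the root of unity $\xi_{\cM}$, the tensor powers of the $\cM$-dimensional quotient of the Verma modules are preserved by the braid group action. A homology class $\FA$ with some $i_k\geq \cM$ corresponds, under the Martel dictionary, to a basis vector $v_{i_k}$ outside this $\cM$-dimensional subspace; the evaluation by the upper cups of the braid diagram then produces a coefficient containing $[\cM]_{\xi_{\cM}}!=0$, so the specialised intersection vanishes as required.

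For the matching of $\PA^{(1)}$ with the level $\cM$ model, I would compare the intersections $\langle(\beta_{n} \cup \mathbb I_{n+1}) \FA,\LA\rangle$ and $\langle(\beta_{n} \cup \mathbb I_{n+1}) \FAM,\LAM\rangle$ for $\bar i \in \{\bar 0,\dots,\overline{\cM-1}\}$. The geometric supports differ only in the presence of $(n-1)(\cN-\cM)$ extra particles distributed among the ovals of $\LA$ and of additional arcs in $\FA$ ending at the extra punctures they encircle. Using the loop formula of Proposition \ref{P:3} together with the same monodromy cancellation that underlies Lemma \ref{gendsuppl}, the contribution of these extra particles evaluated via $\sAtu$ is a controlled scalar which exactly absorbs the mismatch between the normalising prefactors $\prod u^{-1}_{C(i)}$ at the two levels. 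This yields the equality $\PA^{(1)}\bigm|_{\sAtu} = \PAM\bigm|_{\sAt}$, and Theorem \ref{THEOREMA} finishes the argument.

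The main obstacle will be the third step: making the ``controlled scalar'' produced by the extra particles entirely explicit, since a priori the two intersection pairings live in different configuration spaces and one must track how the level $\bar{\cN}$ local system degenerates onto the level $\bar{\cN}_{\cM}$ local system once $x_i$ is sent to $\xi_{\cM}^{\lambda_i}$ rather than $\xi_{\cN}^{\lambda_i}$. Once this stability of the pairing under change of level is established, the vanishing of Step 2 and the already-proved level $\cM$ topological model combine to give the theorem.
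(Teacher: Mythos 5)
Your decomposition of the level-$\cN$ state sum into indices bounded by $\overline{\cM-1}$ and indices in $\{\bar{\cM},\dots,\overline{\cN-1}\}$ is exactly the paper's strategy, and both halves of your plan aim at true statements, but each half deviates from the paper's execution in a way worth recording. For the vanishing of the higher-index terms, the paper does not re-run the Reshetikhin--Turaev argument: it gives a purely homological computation, invoking the identity from \cite[Proposition 4.8]{Cr3}, namely $\FAA=\prod_{k=1}^{n-1}(i_k)_{d^{4}}!\,(\cN-1-i_k)_{d^{4}}!\ \LAV$, which relates the ``arcs with multiplicity'' class to the ``configurations on arcs'' class, and then observing that $\sAtu(d)=\xi_{\cM}^{-1/2}$ kills $(i_k)_{d^{4}}!$ as soon as some $i_k\geq\cM$, so the class $\FA$ itself dies under the specialisation. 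Your representation-theoretic route via Martel's dictionary is plausible but more delicate than you suggest: at the root of unity $\xi_{\cM}$ the dictionary is set up for the $\cM$-dimensional truncation of the Verma module, and classes with $i_k\geq\cM$ correspond to vectors outside it, so you would first need to justify that the correspondence still computes the pairing there. The paper's coefficient identity sidesteps this entirely.

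For the matching of the low-index terms with the level-$\cM$ model, the ``main obstacle'' you anticipate does not exist: the normalising prefactors $\prod_{i=1}^{l}u_i^{(f_i-\sum_{j\neq i}lk_{i,j})}\cdot\prod_{i=2}^{n}u^{-1}_{C(i)}$ are literally identical in the level-$\cN$ and level-$\cM$ formulas, so there is no mismatch to absorb, and your ``controlled scalar'' is exactly $1$. The paper proves the stronger statement that for $\bar{i}\in\{\bar{0},\dots,\overline{\cM-1}\}$ the pairings $\left\langle(\beta_{n}\cup{\mathbb I}_{n+1})\,\FA,\LA\right\rangle$ and $\left\langle(\beta_{n}\cup{\mathbb I}_{n+1})\,\FAM,\LAM\right\rangle$ agree \emph{before any specialisation}: one builds a bijection between intersection points by adjoining the $(\cN-\cM)(n-1)$ forced intersection points in the right-hand side of the disc, notes that these extra components carry positive local sign, that the corresponding extra loops neither twist nor encircle any puncture (so the local system evaluates them to $1$), and that the two local systems $\bar{\Phi}^{\bar{\cM}}$ and $\bar{\Phi}^{\bar{\cN}}$ agree on the left-hand-side monodromies and on the relative winding. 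So your third step is not a wrong turn, but as written it is a placeholder rather than a proof; this bijection-of-intersection-points argument is the content you are missing, and once supplied it makes the ``degeneration of local systems'' worry moot.
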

\begin{proof}
Let us start by recalling the definition of the intersection form at level $\cN$:
\begin{equation}\label{f1}
\begin{aligned}
& \PA=u^{f} \cdot u^{-(n-1)} \cdot \sum_{\bar{i}\in\{\bar{0},\dots,\overline{\cN-1}\}}  \left\langle(\beta_{n} \cup {\mathbb I}_{n+2} ) \ { \color{red} \FA}, {\color{dgreen} \LA}\right\rangle. 
 \end{aligned}
\end{equation} 
Similarly, the intersection form at the inferior level, $\cM$, has the formula:
\begin{equation}\label{f2}
\begin{aligned}
& \PAM=u^{f} \cdot u^{-(n-1)} \cdot \sum_{\bar{i}\in\{\bar{0},\dots,\overline{\cM-1}\}}  \left\langle(\beta_{n} \cup {\mathbb I}_{n+2} ) \ { \color{red} \FAM}, {\color{dgreen} \LAM}\right\rangle. \end{aligned}
\end{equation} 
Following the topological model presented in Theorem \ref{THEOREMA} and Theorem \ref{THEOREMJ}, we have that $\PAM$ and $\PJ$ recover the $\cM^{th}$  coloured Alexander and coloured Jones polynomials, through the specialisation of coefficients $\sAtu$ and $\psi_{\cM}$, respectively as below: 
\begin{equation}
\begin{aligned}
\Phi^{\cM}_{\la}(K)& =~ \left(\PAM \right)\Bigm| _{\sAtu}\\
J_{\cM}(K)& =~ \left(J\PAM \right)\Bigm| _{\psi_{\cM}}.
\end{aligned}
\end{equation} 
We remark that for the case of knots $K=\hat{\beta_n}$, the two associated intersections actually coincide, and so we have:
\begin{equation}
\PAM=J\PAM, \forall \cM \in \N.
\end{equation}
Putting together these equations, we see that it is enough to prove that the state sums of Lagrangian intersections $\PA$ and $\PAM$ become equal when specialised through $\psi_{\cM}$:
\begin{equation}\label{f3J}
\begin{aligned}
\left(\PAM \right)\Bigm| _{\sAtuJ} =~ \left(\PA \right)\Bigm| _{\sAtuJ}
\end{aligned}
\end{equation} 
for all levels $\cM$ bounded by $\cN$.

Let us prove relation \eqref{f3J}, which will show that we recover all coloured Jones invariants of level bounded by $\cN$. Using the formulas from equation \eqref{f1} and \eqref{f2}, we want to prove the following relation:
\begin{equation}
\begin{aligned}
 &\sum_{\bar{i}\in\{\bar{0},\dots,\overline{\cM-1}\}}  \left\langle(\beta_{n} \cup {\mathbb I}_{n+2} ) \ { \color{red} \FAM}, {\color{dgreen} \LAM}\right\rangle| _{\psi_{\cM}}=\\
 &\sum_{\bar{i}\in\{\bar{0},\dots,\overline{\cN-1}\}}  \left\langle(\beta_{n} \cup {\mathbb I}_{n+2} ) \ { \color{red} \FA}, {\color{dgreen} \LA}\right\rangle| _{\psi_{\cN}}. 
 \end{aligned}
\end{equation} 
Looking at the second sum we notice that we have two types of terms, namely the ones associated to indices less than $\overline{\cM}$ and the ones which are in between $\overline{\cM}$ and $\overline{\cN-1}$, so we want to show the following relation:
\begin{equation}\label{topr}
\begin{aligned}
\sum_{\bar{i}\in\{\bar{0},\dots,\overline{\cM-1}\}}  \left\langle(\beta_{n} \cup {\mathbb I}_{n+2} ) \ { \color{red} \FAM}, {\color{dgreen} \LAM}\right \rangle & | _{\psi_{\cM}}=
\\=& \sum_{\bar{i}\in\{\bar{0},\dots,\overline{\cM-1}\}}  \left\langle(\beta_{n} \cup {\mathbb I}_{n+2} ) \ { \color{red} \FA}, {\color{dgreen} \LA}\right\rangle| _{\psi_{\cM}}+\\
 & +\sum_{\bar{i}\in\{\bar{\cM},\dots,\overline{\cN-1}\}}  \left\langle(\beta_{n} \cup {\mathbb I}_{n+2} ) \ { \color{red} \FA}, {\color{dgreen} \LA}\right\rangle| _{\psi_{\cM}}.  
 \end{aligned}
\end{equation} 
We will do this in two parts. First we show that the terms from the left and right hand side of the above equation that are associated to an index $\bar{i}\in\{\bar{0},\dots,\overline{\cM-1}\}$ become equal when specialised at level $\cM$. More specifically we will prove the following relation:
\begin{equation}\label{w1}
\begin{aligned}
 \left\langle(\beta_{n} \cup {\mathbb I}_{n+2} ) \ { \color{red} \FAM}, {\color{dgreen} \LAM}\right \rangle  | _{\psi_{\cM}}= \left\langle(\beta_{n} \cup {\mathbb I}_{n+2} ) \ { \color{red} \FA}, {\color{dgreen} \LA}\right\rangle & | _{\psi_{\cM}},\\
 & \forall \bar{i}\in\{\bar{0},\dots,\overline{\cM-1}\}.  
 \end{aligned}
\end{equation} 
Secondly, the terms associated to higher indices $\bar{i}\in\{\bar{\cM},\dots,\overline{\cN-1}\}$ vanish when specialised at level $\cM$. More specifically we will prove the following relation:
\begin{equation}\label{w2}
\begin{aligned}
\left\langle(\beta_{n} \cup {\mathbb I}_{n+2} ) \ { \color{red} \FA}, {\color{dgreen} \LA}\right\rangle & | _{\psi_{\cM}}=0, \ \forall \bar{i}\in\{\overline{\cM},\dots,\overline{\cN-1}\}.  
 \end{aligned}
\end{equation} 
{\bf Step I - Classes that are associated to a multi-index bounded by $\cM$}

\

In order to prove the first part, namely relation \eqref{w1}, we will actually show a slightly stronger property, namely that the classes corresponding to the indices $(\bar{i},\cM)$ and $(\bar{i},\cN)$ give the same intersection as long as the index is bounded by $\bar{\cM}$, as below.
\begin{lem}[Equality of intersections associated to indices less than $\cM$]\label{pr1}

\

The following intersection pairings, between classes associated to indices $\bar{i}$ bounded by $\bar{\cM}$ give the same result, even before specialisation:
\begin{equation}
\begin{aligned}
 \left\langle(\beta_{n} \cup {\mathbb I}_{n+2} ) \ { \color{red} \FAM}, {\color{dgreen} \LAM}\right \rangle= \left\langle(\beta_{n} \cup {\mathbb I}_{n+2} ) \ { \color{red} \FA}, {\color{dgreen} \LA}\right\rangle , \forall \bar{i}\in\{\bar{0},\dots,\overline{\cM-1}\}.  
 \end{aligned}
\end{equation}
\end{lem}

\begin{proof}
Looking at the classes $\FAM$ and $\FA$ we remark that they have the same geometric support in the left hand side of the puncture disc. The only difference is that the class $\FA$comes from a support that has more segments ending in the right hand side of the disc. 

Now, we have to act with the braid $\beta_{n} \cup {\mathbb I}_{n+1}$. This action does not change the support associated to the right hand side of the disc. 

So, we remark that the classes $\langle( \beta_{n} \cup {\mathbb I}_{n+1}\rangle) \FAM$ and $\langle( \beta_{n} \cup {\mathbb I}_{n+1}\rangle) \FA$ have the same geometric support in the left hand side of the disc, the only difference coming from the number of arcs ending in the right hand side of the disc. 

Now, we have to intersect with the dual classes, which are $\LAM$ and $\LA$. Looking at the geometric supports that give the submanifolds in the base configuration space, we remark that we have a bijective correspondence between the associated sets of intersection points, as below:

$$(\beta_{n} \cup {\mathbb I}_{n+2} )  { \color{red} \FFAM}\cap {\color{dgreen} \LLAM} \longleftrightarrow^f (\beta_{n} \cup {\mathbb I}_{n+2} )  { \color{red} \FFA}\cap {\color{dgreen} \LLA} $$
$$\bar{x} \ \ \ \longleftrightarrow \ \ \  f(\bar{x}).$$
In order to construct this bijection, let us consider an intersection point $\bar{x} \in (\beta_{n} \cup {\mathbb I}_{n+2} )  { \color{red} \FFAM}\cap {\color{dgreen} \LLAM}$. We define $f(\bar{x})$ to be the point in the configuration space coming from $\bar{x}$ where we add $(\cN-M)(n-1)$ particles. More specifically, we add to $\bar{x}$ the set of points situated in the right hand side of the disc, at the intersection between the extra arcs ending in a puncture labeled by $\{n+1,...,2n-1\}$ (from the support of $(\beta_{n} \cup {\mathbb I}_{n+2} )  { \color{red} \FFA}$) and the circles that give $\LLA$.

In order to compute the intersections we use the formula from \eqref{eq:1}. Having in mind that our submanifolds come from products of arcs, we can compute the intersections by looking at the product of signs at the local orientations in the punctured disc, associated to $\bar{x}$ and $f(\bar{x})$.
\begin{equation} 
\begin{aligned}
\ll (\beta_{n} \cup {\mathbb I}_{n+2} )  { \color{red} \FAM}, {\color{dgreen} \LAM} \gg & =  \sum_{\bar{x} \in (\beta_{n} \cup {\mathbb I}_{n+2} )  { \color{red} \FFAM}\cap {\color{dgreen} \LLAM}}  \alpha_{\bar{x}} \cdot \bar{\Phi}^{\bar{\cM}}(l_{\bar{x}}) \\
\ll (\beta_{n} \cup {\mathbb I}_{n+2} )  { \color{red} \FA}, {\color{dgreen} \LA} \gg& = \sum_{\bar{y} \in (\beta_{n} \cup {\mathbb I}_{n+2} )  { \color{red} \FFA}\cap {\color{dgreen} \LA}}  \alpha_{\bar{y}} \cdot \bar{\Phi}^{\bar{\cN}}(l_{\bar{y}})=\\ 
&= \sum_{\bar{x} \in (\beta_{n} \cup {\mathbb I}_{n+2} )  { \color{red} \FFAM}\cap {\color{dgreen} \LLAM}}  \alpha_{f(\bar{x})} \cdot \bar{\Phi}^{\bar{\cN}}(l_{f(\bar{x}})).
\end{aligned} 
\end{equation}
Now, looking at the components of the intersection points $\bar{x}$ and $f(\bar{x})$, we see that the points in the left hand side of the disc coincide, so they have the same local signs and all the components from the right hand side have positive orientation, so they do not contribute to the sign cunt. This shows that:
\begin{equation}
\alpha_{\bar{x}}=\alpha_{f(\bar{x})}.
\end{equation}
Next, let us look at the monomials that are associated to these intersection points. The loop in the configuration space which is associated to $f(\bar{x})$, $l_{f(\bar{x})}$, can be obtained from the loop associated to $\bar{x}$ union with another set of loops that pass through the extra intersection points in the right hand side of the disc (which we used in order to construct the bijection $f$). 

However, we remark that these extra loops are evaluated trivially by the local system $\bar{\phi}^{\bar{\cN}}$. This is due to the fact that they do not twist and do not go around any puncture, and so they contribute with coefficients which are all 1.

The last remark is that the two local systems $\bar{\Phi}^{\bar{\cM}}$ and $\bar{\Phi}^{\bar{\cN}}$ evaluate in the same way the loops associated to the punctures in the left hand side of the disc, and also the relative winding. So, the contribution of the components of the intersection points from the left hand side of the disc is the same, when evaluated through these two local systems. 

Putting everything together, we conclude that:
\begin{equation}
\bar{\Phi}^{\bar{\cM}}(\bar{x})=\bar{\Phi}^{\bar{\cN}}(f(\bar{x})), \ \ \ \ \forall \bar{x} \in (\beta_{n} \cup {\mathbb I}_{n+2} )  { \color{red} \FFAM}\cap {\color{dgreen} \LLAM}. 
\end{equation}

This shows that the two intersection pairings lead to the same result:
\begin{equation}
\ll (\beta_{n} \cup {\mathbb I}_{n+2} )  { \color{red} \FAM}, {\color{dgreen} \LAM} \gg=\ll (\beta_{n} \cup {\mathbb I}_{n+2} )  { \color{red} \FA}, {\color{dgreen} \LA} \gg.
\end{equation}
In order to get to the pairings that we are interested in, we still have to change the coefficients via the colouring $f_C$, following relation \eqref{eq:8}. Specialising the previous relation we get:
\begin{equation}
\begin{aligned}
&\langle (\beta_{n} \cup {\mathbb I}_{n+2} )  { \color{red} \FAM}, {\color{dgreen} \LAM} \rangle=\\
&=\ll (\beta_{n} \cup {\mathbb I}_{n+2} )  { \color{red} \FAM}, {\color{dgreen} \LAM} \gg|_{f_C}=\\
&=\ll (\beta_{n} \cup {\mathbb I}_{n+2} )  { \color{red} \FA}, {\color{dgreen} \LA} \gg|_{f_C} =\\
& =\langle (\beta_{n} \cup {\mathbb I}_{n+2} )  { \color{red} \FA}, {\color{dgreen} \LA} \rangle.
\end{aligned}
\end{equation}

\end{proof}
{\bf Step II- Classes associated to higher indices, specialised at level $\cM$}

\

Now, we will investigate what happens with the classes associated to multi-indices bigger than $\cM$, when specialised via the change of coefficients at level $\cM$. We will prove that in this situation the specialisation vanishes, which is what we wanted in relation \eqref{w2}.
\begin{lem}[Vanishing of the intersection pairing at inferior levels]\label{pr2}
\begin{equation}
\begin{aligned}
&\left\langle(\beta_{n} \cup {\mathbb I}_{n+2} ) \ { \color{red} \FA}, {\color{dgreen} \LA}\right\rangle  | _{\psi_{\cM}}=0, \forall \bar{i}\in\{\overline{\cM},\dots,\overline{\cN-1}\}.  
\end{aligned}
\end{equation}
\end{lem}
\begin{proof}
We will prove this relation in a topological way, showing that something special happens with the geometry of the above class when we specialise it to a root of unity or at natural parameters, if we are in the case of knot closures. 

First let us look at the action $(\beta_{n} \cup {\mathbb I}_{n+2} ) \ { \color{red} \FA}$. 

We will use the structure of the homology group $\HA$. Following \cite{CrM}, this homology group has a basis that is generated by the following set of elements: 
\begin{equation}
\mathscr{B}=\{ \mathscr U'_{j_0,...,j_{2n+1}} \mid j_0,...,j_{2n+1} \in \N, j_0+...+j_{2n+1}=(n-1)(\cN-1)+2\} 
\end{equation}

where $\mathscr U'_{j_0,...,j_{2n+1}}$ is the homology class given by the geometric support from Figure \ref{Statesum'}: 
\begin{figure}[H]
\centering
\includegraphics[scale=0.4]{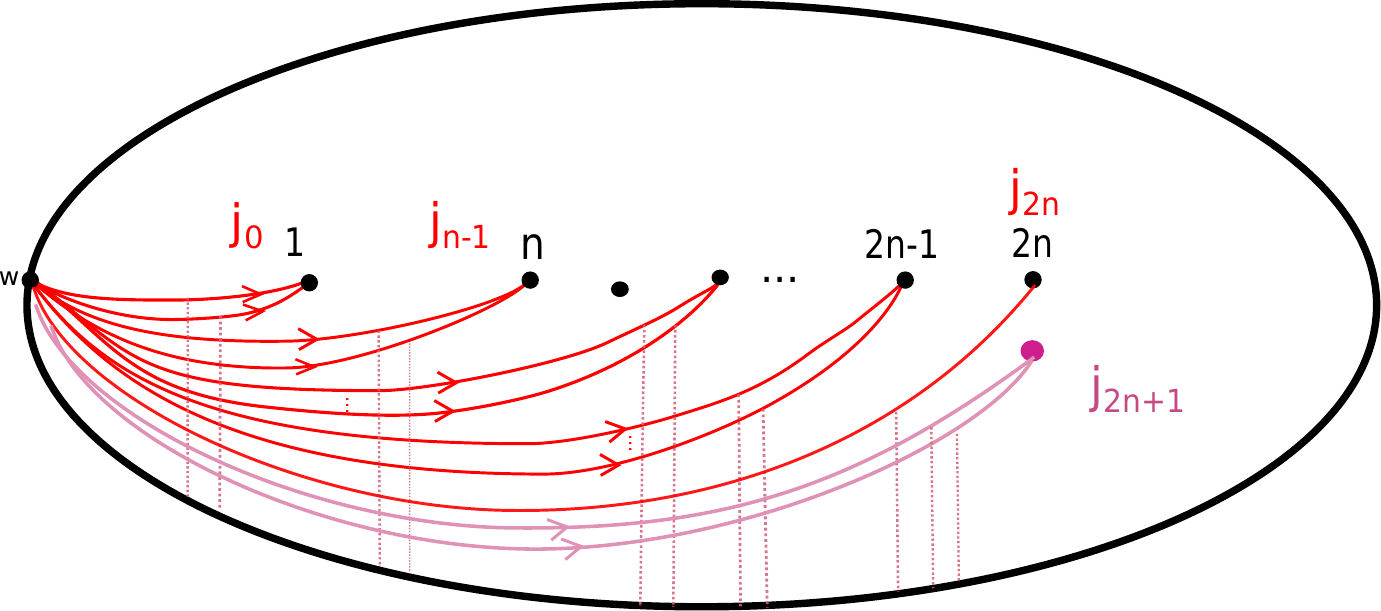}
\caption{Basis for homology}\label{Statesum'}
\vspace{2mm}
\end{figure}

This means that after we act with the braid, there exists a collection of coefficients $\alpha_{\bar{j}}\in \Z[x^{\pm1}, d^{\pm1}]$ such that:
\begin{equation}\label{coeff2}
(\beta_{n} \cup {\mathbb I}_{n} ){ \FA}=\sum_{\substack{\bar{j}=(j_0,...,j_{2n+1}) \\j_0+...+j_{2n+1}=(n-1)(\cN-1)+2}}\alpha_{\bar{j}} \cdot \mathscr U'_{\bar{j}}.
\end{equation}

We remark that the action modifies the geometric support of the class just around the punctures from the left hand side of the disc, since on the right hand side we have the trivial braid.  This means that 
\begin{equation}\label{coeff2}
(\beta_{n} \cup {\mathbb I}_{n}){ \FA}=\sum_{\substack{\tilde{j}=(j_0,...,j_{n-1})}}\alpha_{\bar{j}} \cdot \mathscr F_{\tilde{j},\tilde{i}}.
\end{equation}
where we denote $$F_{\tilde{j},\tilde{i}}:=\mathscr U'_{\tilde{j},\tilde{i}}$$ for $\tilde{i}=(1,\cN-1-j_{n-1},...,\cN-1-j_{1},1,0).$
Next, we look at the intersection with the dual class. We have the following property of the intersection form:
\begin{equation}\label{prop}
\left\langle(\beta_{n} \cup {\mathbb I}_{n+2} ) \ { \color{red} \FA}, {\color{dgreen} \LA}\right\rangle=0 \text{ if } (j_0,j_1,...,j_{n})\neq(0,i_1,...,i_{n-1}).
\end{equation}
This comes from a similar argument as the one presented in Step $4$ and Step $5$ from the proof of Theorem \ref{THEOREMAP}.
So our intersection form sees just the coefficient associated to the index $(0,i_1,...,i_{n})$: $\alpha_{\bar{i}}$ from formula \eqref{coeff2}.

For the next part, we follow a parallel phenomenon which happens on the algebraic side. The idea is that thanks to the fact that we work with knots, the induced braid action on the set of punctures from the left hand side of the disc permutes all these punctures between themselves by a cyclic permutation of maximum length:
$\sigma$ (its order is $n$). 

We look at the support of the homology class $\FA$ and we start with the first puncture whose associated index is $i_0=0$. After we act with the braid, this will contribute to the coefficients associated to the puncture $\sigma(1)$. Let us see what happens if $\sigma(1) \geq \cM$. This means that through our braid action we increase this index to jump over $\cM$ at some point. Now, we use the form of the braid action: the homological braid action corresponds to an $R$-matrix that uses powers of generators $E$ and $F^{(n)}$, via the identification between quantum and homological representations from \cite{Martel}, Lemma 6.37. Roughly speaking $E$ always decreases the index and the generators $F^{(n)}$ when acted on a component $v_i$ with index $i$ contribute to the coefficient by the formula:
\begin{equation}
F^{(n)}v_i \simeq  {n+i \brack i}_d \prod _{k=0}^{n-1} (xd^k-1) v_{i+n}.
\end{equation}
Here $\simeq$ means up to an invertible factor.

This means that if we arrive through our braid action in the puncture $\sigma(1)$ at an index such that $i_{\sigma(1)}> \cM$, since we started from $0$ at some point we acted with 
\begin{equation}
F^{(n)}v_i \simeq  {n+i \brack i}_d \prod _{k=0}^{n-1} (xd^k-1) v_{i+n}
\end{equation}
for $i<\cM$ and $i+n\geq \cM$.
\begin{rmk}(Specialisations associated to coloured Jones polynomials give vanishing coefficients)
\begin{equation}
\begin{aligned}
\psi_{\cM} \left( {n+i \brack i}_d \prod _{k=0}^{n-1} (xd^k-1)\right) =0
\end{aligned}
\end{equation}
for $i<\cM$ and $i+n\geq \cM$.
This means that if $i_{\sigma(1)}> \cM$ we have: $\alpha_{\bar{i}}=0$. Inductively, since $\sigma$ is a cycle, if we have a component $i_k>\cM$ then $\alpha_{\bar{i}}=0$, so our intersection sees just the classes with multi-indixes $\bar{i}$ that have all components less than $\cM$, when specialised through the specialisations at level $\cM$. This concludes the proof of the annulation of the specialised intersection form. 

\end{rmk}

\end{proof}

Now, we put together these properties in order to finish the proof of the globalisation property. Following Lemma \ref{pr1} and \ref{pr2}, we have:
\begin{equation}
\begin{aligned}
& \sum_{\bar{i}\in\{\bar{0},\dots,\overline{\cM-1}\}}  \left\langle(\beta_{n} \cup {\mathbb I}_{n+2} ) \ { \color{red} \FA}, {\color{dgreen} \LA}\right\rangle| _{\psi_{\cM}}+\\
 & +\sum_{\bar{i}\in\{\bar{\cM},\dots,\overline{\cN-1}\}}  \left\langle(\beta_{n} \cup {\mathbb I}_{n+2} ) \ { \color{red} \FA}, {\color{dgreen} \LA}\right\rangle| _{\psi_{\cM}}=\\
 & =\sum_{\bar{i}\in\{\bar{0},\dots,\overline{\cM-1}\}}  \left\langle(\beta_{n} \cup {\mathbb I}_{n+2} ) \ { \color{red} \FAM}, {\color{dgreen} \LAM}\right \rangle  | _{\psi_{\cM}}.
 \end{aligned}
\end{equation} 
This shows the relation that we wanted, presented in equation \eqref{topr} and so relation \eqref{f3J} holds. This concludes the proof that our intersection pairing recovers all coloured Jones polynomials for levels bounded by $\cM$, as stated in Theorem \ref{THEOREMAU'}. 

\end{proof}

\subsection{Graded intersection in the ring with integer coefficients}
 \begin{lem}
For any braid (in general, not just for braids that give knot closures), the graded intersections $\PA$ and $\PJ$ take values in the Laurent polynomial ring with integer coefficients:
$$\PA, \PJ \in \Z[u_1^{\pm 1},...,u_l^{\pm 1},x_1^{\pm 1},...,x_l^{\pm 1},y^{\pm 1}, d^{\pm 1}].$$
\begin{proof}
We remind the formula for this intersection form:
\begin{equation}
\begin{aligned}
\PA&:=\prod_{i=1}^l u_{i}^{ \left(f_i-\sum_{j \neq {i}} lk_{i,j} \right)} \cdot \prod_{i=2}^{n} u^{-1}_{C(i)} \cdot \\
 & \ \ \ \ \ \ \ \ \ \ \ \ \ \ \ \ \ \ \ \sum_{\bar{i}\in\{\bar{0},\dots,\overline{\cN-1}\}}  \left\langle(\beta_{n} \cup {\mathbb I}_{n+2} ) \ { \color{red} \FA}, {\color{dgreen} \LA}\right\rangle. 
 \end{aligned}
\end{equation} 
Since the homology classes $\FA$ and $\LA$ belong to the homology groups $\HA$ and $\HAd$ that are modules over  $\C[x_1^{\pm 1},...,x_l^{\pm 1},y^{\pm 1}, d^{\pm 1}]$, we have that a priori 
$$\PA \in \C[u_1^{\pm 1},...,u_l^{\pm 1},x_1^{\pm 1},...,x_l^{\pm 1},y^{\pm 1}, d^{\pm 1}] .$$
We will show that for our particular homology classes, we have the property that actually their intersection pairing has all coefficients that are integers:
\begin{equation}\label{integ}
\left\langle(\beta_{n} \cup {\mathbb I}_{n+2} ) \ { \color{red} \FA}, {\color{dgreen} \LA}\right\rangle \in \Z[x_1^{\pm 1},...,x_l^{\pm 1},y^{\pm 1}, d^{\pm 1}].
\end{equation}
As we have seen in Proposition \ref{P:3'}, we have 
$$ \left\langle ~,~ \right\rangle= \  \ll ~,~ \gg|_{f_C}$$
and the formula for $\left\langle ~,~ \right\rangle$ is the same as the one from Proposition \ref{P:3}, specialised via $f_C$.
 
 We will use Remark \ref{orientd}, and then specialise the coefficients through $f_C$.
For our homology classes, their intersection will be paramatrised by the intersection points between their associated submanifolds in the base configuration space, graded by $\bar{\psi}^{\cN}$ . 
So it is enough to show that $\tilde{\psi}^{\cN}$ evaluated on the loops that are associated to our intersection points takes integer coefficients.
Following relation \eqref{sor} we remind that 
$$\bar{\psi}^{\cN}=\eta \circ \tilde{\psi}^{\cN}.$$
The monodromies of the local system $\tilde{\psi}^{\cN}$ are:
\begin{itemize}
\item $x_i^2$ around the $p$-punctures from the left hand side of the disc, ($i \in \{1,...,n\}$)
\item $x_i^2 \cdot (d')^{2(\cN_i-1)}$ around the $p$-punctures punctures from the right hand side of the disc ($i \in \{n+1,...,2n\}$)
\item $y_1,...,y_l$ around the $s$-punctures
\item $(d')^2$ for the relative twisting.
\end{itemize} 
So $\tilde{\psi}^{\cN}$ evaluates classes of loops with even powers of $d'$. This means that when composing with the change of coefficients $\eta$, given by:
\begin{equation} 
\begin{aligned}
&\eta: \Z[x_1^{\pm 1},...,x_n^{\pm 1},y^{\pm1}, d'^{\pm 1}]\rightarrow \C[x_1^{\pm 1},...,x_n^{\pm 1},y^{\pm 1}, d^{\pm 1}]\\
&\begin{cases}
\eta(x_i)=x_i\\
\eta(y)=y\\
\eta(d')=id.
\end{cases}
\end{aligned}
\end{equation} as in \eqref{groupring'}, we obtain integer coefficients. 

This shows that \eqref{integ} holds and so our intersection form has integer coefficients: 
$$\PA \in \Z[u_1^{\pm 1},...,u_l^{\pm 1},x_1^{\pm 1},...,x_l^{\pm 1},y^{\pm1}, d^{\pm 1}].$$

The same argument applies also for the intersection form:
$$\PJ \in \Z[u_1^{\pm 1},...,u_l^{\pm 1},x_1^{\pm 1},...,x_l^{\pm 1},y^{\pm1}, d^{\pm 1}].$$
\end{proof}

 \end{lem}

\section{$\cN^{th}$ Unified Jones invariant} \label{S:UN}In this section our aim is to construct a knot invariant $\PAAJ$ starting from the graded intersection in the confogiration space $\Conf_{2+(n-1)(\cN-1)}\left(\mathbb D_{2n+2}\right)$: $$\PA.$$ 
This invariant is defined in a fixed configuration space  will unify all coloured Jones polynomials up to level $\cN$. 

The first part for this construction concerns the definition the appropiate ring of coefficients where these invariants will be defined. Secondly, we will show that in such a universal ring, we have indeed a well defined knot invariant. 

\subsection{Set-up and notations}

\begin{defn}[Rings for the universal invariant] Let us define the following rings:
 \begin{equation}
 \begin{cases}
 \LL:=\Z[x^{\pm 1},d^{\pm 1}]\\
\LNJ=\Z[d^{\pm 1}] .
 \end{cases}
 \end{equation}
\end{defn}
From now on, since we work with knots, we will set $y=1$ and $u=x$, as explained in Convention \ref{convention}. Then our intersection $\PA$ recovers the normalised versions of the $\cN^{th}$ coloured Jones and Alexander polynomials. We define $J_{\cN}$ to be the ${\cN}^{th}$ normalised coloured Jones polynomial and we will look at it in the variable $d$ (which corresponds to $q^{-1}$ from the previous sections).

So we have our intersection $$\PA \in \LL=\Z[x^{\pm 1},d^{\pm 1}].$$
\begin{defn}[Specialisation for the universal invariant] 

\

We change the ring of coefficients as above, and let us define the associated globalised specialisation of coefficients as below:
$$ \snJ: \LL=\Z[x^{\pm 1},d^{\pm 1}] \rightarrow \LNJ$$
\begin{equation}
\snJ(x)= d^{1-\cN}.
\end{equation}
\end{defn}
\subsection{Product up to a finite level} 
\begin{defn}[Product up to level $\cN$] For a fixed level $\cN$, let us define the product of the rings for smaller levels, as:
 \begin{equation}
\begin{aligned}
 \LNtJ:=\prod_{\cM \leq \cN}\LMJ.
 \end{aligned}
 \end{equation}
 and $\PMNJ:\LNt \rightarrow \LMJ$ the projection onto the $\cM^{th}$ component.
 
 Also, let us denote by $\sntJ: \LL \rightarrow \LNtJ$ the product of specialisations:
\begin{equation}
 \sntJ:=\prod_{\cM \leq \cN}\smJ.
 \end{equation} 
\end{defn}
\begin{defn}[Coloured invariants up to a fixed level]
We denote the product of the invariants up to level $\cN$ as below:
 \begin{equation}
 \begin{aligned}
& \ANtJ:=\prod_{\cM \leq \cN}J_{\cM}(K) \in \LNtJ. 
\end{aligned}
 \end{equation}

\end{defn}
We will construct our universal ring using this sequence of morphisms $\{\sntJ \mid \cN \in \N\}$. More specifically, we will use the sequence of kernels associated to these maps, as follows. 
\begin{defn}[Kernels and quotient rings] Let us denote by:
\begin{equation}\label{ker}
\begin{aligned}
&\tilde{I}^{J}_{\cN}:=\text{Ker}\left(\sntJ \right) \subseteq \LL.
\end{aligned}
\end{equation}
We also denote the quotient rings associated to these ideals by:
\begin{equation}
\LLNJ:=\LL / \tilde{I}^{J}_{\cN}.
\end{equation}
\end{defn}
We remind that we have the product of the rings for smaller levels:
 \begin{equation}
  \LNtJ:=\prod_{\cM \leq \cN}\LMJ.
 \end{equation}
 \begin{defn}[Induced maps on quotient rings]
This means that we have the following diagram with quotient maps, which we denote $\snbbJ$ and $g^J_{\cN}$, as below:
\begin{figure}[H] \label{L:special}
\begin{center}
\begin{tikzpicture}
[x=1mm,y=1mm]

\node (b1)               at (-20,0)    {$\LL$};
\node (a1)               at (-32,0)    {$\PA \in$};


\node (b2) [color=dgreen] at (-80,0)   {$\LNtJ$};
\node (b3) [color=red] at (-40,-20)   {$\LLNJ$};
\node (b4) [color=red] at (-85,-20)   {$\LLhJ$};
\node (c1) [color=dgreen] at (-75,20)   {$\LNJ$};
\node (a2')               at (-53,-20)    {${\color{red} \PAAJ} \in$};

\draw[->,color=black]   (b1)      to node[left,xshift=-3mm,yshift=0mm,font=\large]{            $g^J_{\cN}$} (b3);
\draw[->,color=black]   (a1)      to  node[right,xshift=-2mm,yshift=3mm,font=\large]{$\sntJ$}                        (b2);
\draw[->,color=black]   (b3)      to node[left,yshift=-2mm,xshift=1mm,font=\large]{$\snbbJ$}                        (b2);
\draw[->,color=black, dashed]   (b4)      to  node[right,xshift=-2mm,yshift=-3mm,font=\large]{}                        (a2');
\draw[->,color=black,dashed]   (b4)      to  node[right,xshift=0mm,yshift=0mm,font=\large]{$\snbJ$}                        (b2);
\draw[->,color=black,dashed]   (b2)      to  node[right,xshift=0mm,yshift=0mm,font=\large]{$\PNNJ$}                        (c1);
\draw[->,color=black,dashed]   (b4)   [out=150,in=130]   to  node[xshift=-7mm,yshift=0mm,font=\large] {Def. $\ref{pl}$ \ \ \ \ \  $\usnJ$}                        (c1);
\end{tikzpicture}
\end{center}
\vspace{-3mm}
\caption{\normalsize Quotients of the intersection form}\label{diagqu}
\end{figure}

We also define the projection map:

\begin{equation}\label{quotmn}
\begin{aligned}
&\snbJt: \LLNJ \rightarrow \LMJ\\
&\smnbJt:= \PMNJ \circ \snbbJ.
\end{aligned}
\end{equation}
 \end{defn}
\begin{rmk}

By the definition of the product rings $\LNtJ$, we see that we have a sequence of nested ideals:
\begin{equation}
\begin{aligned}
&\cdots \supseteq \tilde{I}^{J}_{\cN} \supseteq \tilde{I}^{J}_{\cN+1} \supseteq \cdots 
\end{aligned}
\end{equation}
\end{rmk}
\begin{defn}[Sequence of quotient rings]
In this manner, we obtain a sequence of associated quotient rings, with maps between them:
\begin{equation}
\begin{aligned}
 l^{J}_{\cN} \hspace{10mm} l^{J}_{\cN+1}  \\
\cdots \LLNJ \leftarrow \LLNNJ \leftarrow \cdots 
\end{aligned}
\end{equation}
\end{defn}
\begin{defn}[Limit ring]\label{eq10:limit} We define the projective limit of this sequence of rings and denote it as follows:
\begin{equation}
\LLhJ:= \underset{\longleftarrow}{\mathrm{lim}} \ \LLNJ.
\end{equation}
\end{defn}
\begin{defn}\label{pl}
Then, we have induced maps:
 \begin{equation}
 \snbJ: \LLhJ \rightarrow \LNtJ.
  \end{equation}
Further, composing with the projection onto the $\cN^{th}$ component we obtain the following map:
\begin{equation}
\usnJ: \LLhJ \rightarrow \LNJ,
  \end{equation}
defined as $$\usnJ:=\PNNJ \circ \snbJ .$$
\end{defn}
\begin{defn}[Projection maps for the coloured Jones coefficient rings] We have projection maps which go from the coefficient rings at consecutive levels, as follows:
 \begin{equation}\label{eq10:2}
 \PPNJ: \LNNtJ \rightarrow \LNtJ.
  \end{equation}
 \end{defn}
Now we are going to define step by step the universal invariant, coming from the geometric data given by our intersection pairing.
\subsection{$\cN^{th}$ Unified Jones invariant}
 First, we define a set of knot invariants that will be used to build the globalised coloured Jones invariant. We recall that we have the graded intersection:
 $$\PA \in \LL$$
 and it recovers the coloured Jones polynomials, as below:
 \begin{equation}\label{rem1}
 \snJ\left( \PA\right)=\ANJ.
 \end{equation}
Taking the product component-wise, we conclude that:
\begin{equation}\label{eq:10.1}
 \sntJ\left( \PA\right)=\ANtJ.
 \end{equation}
\begin{defn}[Level $\cN$ Jones-quotient] \label{NJO}
Let us consider the intersection form obtained from $\PA$ by taking the quotient through the projection $g^J_{\cN}$ (as in Figure \ref{diagqu}):
\begin{equation}
\begin{aligned}
&\PAAJ \in \LLNJ\\
& \PAA=g_{\cN}(\PA).
\end{aligned}
\end{equation}
\end{defn}  
Now we will prove Theorem \ref{INV} which we remind below.
\begin{thm}[$\cN^{th}$ unified Jones invariant]\label{levN}
The intersection $\PAAJ$ is a well-defined knot invariant unifying all coloured Jones polynomials up to level $\cN$:
$$  \PAAJ|_{\smnbJt}=\AMJ, \forall \cM \leq \cN.$$
\end{thm}
\begin{proof}
On one hand we have we have:
$$ \sntJ\left( \PA\right)=\ANtJ$$
for $L=\hat{\beta_n}$. On the other hand, following Definition \ref{NJO}, the projection $g^J_{\cN}$ is defined from the map $\sntJ$ and we have:
\begin{equation}
 \PAA=g_{\cN}(\PA).
 \end{equation}
Putting these properties together, it follows that 
$$ \snbb\left( \PAA\right)=\ANtJ.$$
On the other hand, from Definition \ref{ker} of the quotient rings, we know that $\snbb$ is injective. Also, $\ANtJ$ is given by the product of all coloured Jones invariants up to level $\cN$, so it is a knot invariant. 

So, we conclude that $\PAA$ is mapped to the knot invariant $\ANtJ$, through the injective function $\snbb$. This means that $\PAA$ is a well-defined knot invariant. 

This knot invariant recovers all coloured Jones polynomials up to level $\cN$, using the definition of the quotient morphisms from \eqref{quotmn}. This concludes the proof of the structure of this knot invariant. 

\end{proof}

\section{Universal Coloured Jones Invariant} \label{S:JU}
In this part our aim is to unify and show that we can define an invariant out of the sequence: $$\{\PAA\mid \cN \in \N\}.$$
 \subsection{Definition of the universal invariant} \label{univ}

 First we recall the Definition \ref{eq10:limit} which contains the formula for the appropiate ring of coefficients where this universal invariant will be defined
$$\LLhJ:= \underset{\longleftarrow}{\mathrm{lim}} \ \LLNJ.$$

Next we show that in such a universal ring, we have indeed a well defined knot invariant which is a universal coloured Jones invariant, recovering all $J_{\cN}(K)$ for all colours $\cN \in \N$.
This will be constructed from the sequence of intersections up to level $\cN$.
 \begin{defn}[Unification of all coloured Jones knot invariants] Let us consider the projective limit of the graded intersection pairings $\PAAJ$ over all levels, and denote it as follows:
\begin{equation}
\IJJ:= \underset{\longleftarrow}{\mathrm{lim}} \ \PAAJ \in \LLhJ.
\end{equation}
\end{defn}

  \begin{defn}[Limit ring of coefficients and limit invariant]
 Now, let us consider the product of the rings where the coloured Jones invariants belong to, where we put no condition about the level:
  \begin{equation}
 \LaJ:=\prod_{\cM \in \N}\LMJ
 \end{equation}

 \begin{equation}
 \AaJ:=\prod_{\cM \leq \cN}\AMJ \in \LaJ. 
 \end{equation}
Using the definition from above, let us denote the projection map:
 \begin{equation}
\PhM: \LaJ \rightarrow \LMJ.
  \end{equation}
We remark that this means that projecting onto the $\cM^{th}$ component we obtain the $\cM^{th}$ coloured Jones invariant:
\begin{equation}\label{eq10:5}
\PhM(\AaJ)=\AMJ.
\end{equation}

\end{defn}
Putting together all these tools now we are ready to show Theorem \ref{THEOREMUnivJ}, which we recall as follows.
\begin{thm}[Universal coloured Jones Invariant]\label{proofADO} The limit of the invariants via the graded intersections $\IJJ$ is a knot invariant recovering all coloured Jones polynomials:
\begin{equation} 
J_{\cN}(K)=  \IJJ|_{\usnJ}.
\end{equation}
\end{thm}

\begin{proof}

\begin{figure}[H]
\begin{equation}
\centering
\begin{split}
\begin{tikzpicture}
[x=1mm,y=1.6mm]
\node (ll) at (-30,15) {$\LL$};
\node (tl) at (0,62) {$\LLhJ$};
\node (tr) at (60,62) {$\LaJ$};
\node (ml) at (0,30) {$\LLNNJ$};
\node (mr) at (60,30) {$\LNNtJ$};
\node (bl) at (0,0) {$\LLNJ$};
\node (br) at (60,0) {$\LNtJ$};
\node (rr) at (90,15) {$\Lambda_{\cM}$};
\node (lle) at (ll.south) [anchor=north,color=red] {\rotatebox{90}{$\subseteq$}};
\node (llee) at (lle.south) [anchor=north,color=red] {$\{\PA\}$};
\node (tle) at (tl.south) [anchor=north,color=red] {$\IJJ$};
\node (tre) at (tr.south) [anchor=north,color=red] {$\AaJ$};
\node (mle) at (ml.south east) [anchor=north west,color=red] {$\PAANJ$};
\node (ble) at (bl.north east) [anchor=south west,color=red] {$\PAAJ$};
\node (mre) at (mr.south west) [anchor=north east,color=blue] {$\ANNtJ$};
\node (bre) at (br.north west) [anchor=south east,color=blue] {$\ANtJ$};
\node (rre) at (rr.north) [anchor=south,color=blue] {$\AMJ$};
\node (trd) at (62,60.5) [anchor=west] {$\coloneqq \displaystyle\prod_{\cM \in \N} \LMJ$};
\draw[->] (tl) to node[above,font=\small,color=gray]{$\suJ$} (tr);
\draw[->,densely dashed] (ll) to node[above,font=\small]{$g^J_{\cN +1}$} (ml);
\draw[->,densely dashed] (ll) to node[below,font=\small]{$g^J_{\cN}$} (bl);
\draw[->] (mr) to node[below,font=\small]{$\PMNNJ$} (rr);
\draw[->](br) to node[below,font=\small]{$\PMNJ$} (rr);
\inclusion{above}{$\snnbbJ$}{(ml)}{(mr)}
\inclusion{below}{$\snbbJ$}{(bl)}{(br)}
\draw[->] (ml) to node[left,font=\small]{$\ell_{\cN}$} (bl);
\draw[->] (mr) to node[right,font=\small]{$\PPNJ$} (br);
\draw[->] (ll) to[out=-71,in=210] node[below,font=\small]{$\sntJ$} (br);
\draw[->] (ll) to[out=70,in=150] node[above,font=\small]{$\snntJ$} (mr);
\draw[|->,color=red] (13,22) to node[right,font=\small,circle,draw,inner sep=1pt]{3} (13,8);
\draw[|->,color=blue] (47,22) to node[left,font=\small,circle,draw,inner sep=1pt]{1} (47,8);
\draw[|->,color=gray] (mle) to node[above,font=\small,circle,draw,inner sep=1pt]{2} (mre);
\draw[|->,color=gray] (ble) to node[below,font=\small,circle,draw,inner sep=1pt]{2} (bre);
\node [color=dgreen] at (28,15) {$\equiv$};
\node [color=dgreen,font=\small,circle,draw,inner sep=1pt] at (32,15) {4};
\draw[|->,densely dashed,color=red] (tle) to (tre);
\draw[|->,color=red] (tle.south east) to node[above,font=\small]{$\usmJ$} (rre);
\draw[|->,color=gray] (tre) to node[right,font=\small]{$\hat{p}^J_{\cM}$} (rre);
\draw[->,dotted,color=gray] (tle) to (ml);
\draw[->,dotted,color=gray] (tre) to (mr);
\draw[->,color=blue] (tle) to node[left,font=\small]{$\snbbbJ$} (mr);
\node[draw,rectangle,inner sep=3pt] at (-25,62) {Limit};
\node[draw,rectangle,inner sep=3pt,color=red] at (-29,57) {Universal Jones invariant};
\end{tikzpicture}
\end{split}
\end{equation}

\caption{\large The universal ring and universal invariant as projective limits}
\end{figure}

In order to have a well-defined limit, we should prove that:
\begin{equation}
l^J_{\cN}\left(\PAANJ \right)=\PAAJ.
\end{equation}

\begin{rmk}\label{rk1} Following the definition of the projection maps for the coefficient rings from relation \eqref{eq10:2}, we have the property:
\begin{equation}
\PPNJ \left(\ANNtJ \right)=\ANtJ.
\end{equation}
\end{rmk}
\begin{lem}\label{lemma2}
The knot invariant at level $\cN$ recovers all coloured Jones invariants with levels smaller than $\cN$, which means that we have the following relation:
\begin{equation}
\snbbJ \left(\PAAJ \right)=\ANtJ.
\end{equation}
 \end{lem}
\begin{proof}
This property will follow from Theorem \ref{THEOREMAU} together with the definition of the quotient maps, as we will see below. We notice that it is enough to prove that this relation holds when composed with the set of projections $\PMNJ$ for all $\cM \leq \cN$.
So, we want to show:
\begin{equation}
\PMNJ \circ \snbb \left(\PAAJ \right)=\PMNJ \circ \ANtJ \left( =\AMJ \right),  \ \ \ \forall \cM \leq \cN.
\end{equation}
By construction we have that:
\begin{equation}
\PAAJ =g^J_{\cN} \left(\PA \right).
\end{equation}
This means that we want to show:
\begin{equation}\label{eq10:3}
\PMNJ \circ \snbbJ \circ g^J_{\cN} \left(\PA \right) =\ANJ,  \ \ \ \forall \cM \leq \cN.
\end{equation}
On the other hand, we have:
$\snbbJ \circ g^J_{\cN}=\sntJ$ and $\PMNJ \circ \sntJ=\smJ$. This means that relation \eqref{eq10:3} is equivalent to:
\begin{equation}
\smJ \left(\PA \right) =\ANJ,  \ \ \ \forall \cM \leq \cN.
\end{equation}
This is precisely the statement of the unification result up to level $\cN$ from the Unification Theorem \ref{THEOREMAU}, which concludes the proof of this Lemma.
\end{proof}

\begin{lem}[Well behaviour of the intersection pairings when changing the level]\label{lemma3} When changing the level from $\cN$ to $\cN+1$, the intersection pairings recover one another through the induced map $l_{\cN}$ as below:
\begin{equation}
l^J_{\cN} \left(\PAANJ \right)=\PAAJ.
\end{equation}
 \end{lem}
\begin{proof}
From the construction of our quotients, which use the kernels of specialisation maps for all levels bounded by a fixed colour, we have that the maps $\snbbJ$ and $\snnbbJ$ are injective. Following Lemma \ref{lemma2}, we have the property that:
$$\snbbJ \left(\PAAJ \right)=\ANtJ.$$
This means that our statement is equivalent to: $$\PPNJ \left(\ANNtJ \right)=\ANtJ.$$
This is precisely the property from Remark \ref{rk1}, and so the statement holds.
\end{proof}
As a conclusion after this discussion, we have that:
\begin{equation}
l^J_{\cN} \left(\PAANJ \right)=\PAAJ.
\end{equation}
This shows that there exists a well-defined element in the projective limit, which is a knot invariant:
$$\IJJ \in \LLhJ.$$
The last part that we need to prove is that this universal invariant recoveres all coloured Jones invariants, for any level, as stated in relation \eqref{eq10:4}:
\begin{equation}
\ANJ=  \IJJ|_{\usnJ}.
\end{equation}

\begin{lem}[Commutation of the squares from the diagram]
All squares associated to consecutive levels $\cN$ and $\cN+1$ commute.
\end{lem}
\begin{proof}
This comes from the commutativity when we project onto each factor and by the definition of the quotient maps.   
\end{proof}
This shows that there exists a well-defined ring homomorphism between the limits, which we denote as below:
  \begin{equation}
\suJ: \LLhJ \rightarrow \LaJ.
  \end{equation}

Following the commutativity of the squares at all levels, we have that:

\begin{equation}
\suJ \left( \IJJ \right)=\AaJ.
\end{equation}
Also, using the commutativity of the diagrams together with Definition \ref{eq10:limit} we obtain that:
\begin{equation}
\PhNJ \circ \suJ=\PNNJ \circ \snbJ=\usnJ.
\end{equation}
Now, following relation \eqref{eq10:5}, we have that:
\begin{equation}
\PhNJ(\AaJ)=\ANJ.
\end{equation}
Using the previous three relations, we conclude that:
\begin{equation}
\begin{aligned}
\IJJ|_{\usn}&=\usnJ \left(\IJJ \right)=\\
&=\PhNJ \circ \suJ\left(\IJJ \right)=\\
&=\PhNJ(\AaJ)=\\
&=\ANJ, \ \ \ \ \forall \cN \in \N, \cN\geq 2.
\end{aligned}
\end{equation}
This concludes our construction and shows that we have a universal coloured Jones invariant for links $\IJJ$, constructed from graded intersections in configuration spaces, which recovers the coloured Jones invariants at all levels, through specialisation of coefficients.
\end{proof}

\section{Relation between our universal invariant and Habiro's invariant \cite{H2},\cite{W}} \label{S:rel}

So, we have the universal invariant $\IJJ \in \LLhJ$ in the universal ring which is the projective limit of quotients of
$\LL=\Z[x^{\pm 1},d^{\pm 1}].$
\begin{prop}[Structure of the quotient ideals] The sequence of nested ideals in $\LL$ that we used for the universal invariant have the following form:\\\begin{equation}
\begin{aligned}
&\tilde{I}_{\cN}^{J}:=\langle \prod_{i=1}^{\cN} (xd^{i-1}-1)\rangle \subseteq \LL.\\
\end{aligned}
\end{equation}
\end{prop}
This comes from the definition of the specialisation maps from the relations \eqref{u1J} and  \eqref{ker}. 
Next, we know that they lead to sequences of associated quotient rings and projective limits:
\begin{equation}
\begin{aligned}
 \phantom{A} \hspace*{-80mm}  \ l^J_{\cN} \hspace{10mm} l^{J}_{\cN+1}  \hspace{36mm}\\
\cdots \LLNJ \leftarrow \LLNNJ \leftarrow \cdots  \ \ \ \ \ \ \ \ \ \ \ \ \LLhJ:= \underset{\longleftarrow}{ \mathrm{lim}} \ \LLNJ
\end{aligned}
\end{equation}
Now we will explore the correlation to the work of Habiro and Willetts. In \cite{W}[{Definition 14, Lemma 15}], Willetts considered the sequence of ideals and quotients:
\begin{equation}
\begin{aligned}
&\tilde{I}_{\cN}^{W}:=\langle \prod_{i=1}^{k-1} (xd^{i-1}-1) \prod_{i=k-1}^{\cN-1} (d^i-1) \mid 1\leq k \leq \cN-1 \rangle \subset \LL\\
&\LLNNW=\LL / \tilde{I}_{\cN}^{W}
\end{aligned}
\end{equation}
They lead to the following projective limit:
\begin{equation}
\begin{aligned}
&\cdots \LLNW \leftarrow \LLNNW \leftarrow \cdots \ \ \ \ \ \ \ \ \ \ \ \ \ \LLhW:= \underset{\longleftarrow}{ \mathrm{lim}} \ \LLNW.
\end{aligned}
\end{equation}

\begin{rmk}[Semi-simple versus non semi-simple]
For the case of the topological model for the coloured Jones polynomials, we would like to emphasise that the variable $u$ does not play any important role, as it gets anyway specialised in the same way as $x$, so we can work without it. 


However, for the topological model for coloured Alexander case, the variable $u$ is important and it plays a central role in each specialisation at each level $\cN$, so we cannot separate it directly as for the case of coloured Jones universal invariants, where the representation theory is semi-simple.
\end{rmk}
\begin{defn}[Ideals and limit rings]
As a summary, we have two sequences of ideals that lead to two universal rings, as below:

\begin{equation}
\begin{aligned}
&\LLhJ= \underset{\longleftarrow}{\mathrm{lim}} \ \LLNJ =\underset{\longleftarrow}{ \mathrm{lim }} \ \LL /\tilde{I}_{\cN}^{J} & \tilde{I}_{\cN}^{J}:=\langle \prod_{i=1}^{\cN} (xd^{i-1}-1) \rangle \subset \LL.\\
& \LLhW=\underset{\longleftarrow}{ \mathrm{lim }} \ \LL /\tilde{I}_{\cN}^{W} &\tilde{I}_{\cN}^{W}:=\langle \prod_{i=1}^{k-1} (xd^{i-1}-1) \prod_{i=k-1}^{\cN-1} (d^i-1) \mid 1\leq k \leq \cN-1 \rangle \subset \LL.\\
\end{aligned}
\end{equation}
\end{defn}
Willetts showed the connection between Habiro's Universal invariant seen in the ring $\LLhW$ and fractions between ADO invariants and the Alexander polynomial.
\begin{thm}[Habiro's invariant \cite{H2} and non semi-simple invariants \cite{W}] \label{Will}There exists a well-defined universal invariant in the limit ring $\IW \in \LLhW$ such that:
\begin{equation}
\begin{aligned}
&\IW \Bigm| _{d=\xi_{\cN}^{-1}}  =~ \frac{\Phi^{\cN}(K,x)}{\Delta(K,x^{2\cN})}\\
&\IW \Bigm| _{x=d^{\cN-1}}  =~ J_{\cN}(K). 
\end{aligned}
\end{equation} 
\end{thm}
\begin{thm}[Projection onto Habiro's invariant \cite{H2},\cite{W}]\label{RELATION}There is a natural map between the universal rings $\LLhJ$ and $\LLhW$ which sends our universal Jones invariant to Habiro's universal invariant used by Willetts:
$$\pi: \LLhJ \rightarrow \  \LLhW$$
\begin{equation}
\pi(\IJJ)=\IW. 
\end{equation} 
\end{thm}
In order to prove this Theorem, we start with a discussion concerning relations between the three type of rings and quotients that we have so far. 
\begin{rmk}[Quotients and specialisation maps]\label{qmps}
1) Let $R$ be a ring and $A,B \subseteq R$. There exists a well-defined quotient map $R / A \rightarrow R / B$ if and only if $A \subseteq B$.\\
2) Let $R,R'$ be two rings, $A \subseteq R$ ideal and $f: R \rightarrow R'$ a ring homomorphism. Then there exists a well defined induced map $\bar{f}:R/ A \rightarrow R'$ if and only if $A \subseteq Ker(f)$.
 
\end{rmk}
\begin{rmk}[Specialisation maps from the level $\cN$ quotients]
We have in mind specialisations that are induced from the quotients:
$$\LLNJ, \ \LLNW.$$ 
We have three types of ideals $\tilde{I}_{\cN}^{J}, \tilde{I}_{\cN}^{W}$ and the ones associated to the ADO invariants. We notice that:\\
- the ideal generated by $(xd^{\cM-1}-1)$ describes specialisations related to coloured Jones polynomials at levels $\cM \leq \cN$ [{\em Natural parameters}]\\
- the ideal generated by $(d^{\cM}-1)$ describes specialisations related to coloured Alexander polynomials  at levels $\cM \leq \cN$ [{\em Roots of unity}].
\end{rmk}
\begin{prop}[No specialisations from Willetts's quotients to rings for coloured Jones polynomials]
 Since the ideals $\tilde{I}_{\cN}^{W}$ are larger than the kernel of the above specialisations, this means that there is no well defined specialisation map from the intermadiary rings used by Willetts to either generic parameters or roots of unity:
 $$\nexists \ \psi_A^W: \LLNW \rightarrow \C(\xi_{\cN}^{\pm1})$$
  $$\nexists \ \psi^W: \LLNW \rightarrow \LNJ.$$
\end{prop}
This comes directly from Remark \ref{qmps}.
\begin{rmk}[No unification at intermediary levels]
So the question of having a knot invariant at Willetts's intermediary levels that sees either coloured Jones invariants or coloured Alexander invariants is impossible to even state. 

\end{rmk}

\begin{prop}[Relation between the two quotients and associated universal rings]
We have $\tilde{I}_{\cN}^{J} \subseteq \tilde{I}_{\cN}^{W} \subseteq \Z[x^{\pm1}, d^{\pm 1}]$. This means that at the level of quotient rings we have:
\begin{equation}\label{levelNrings}
\begin{aligned}
& \hspace{30mm}\pi_{\cN}\\
&\Z[x^{\pm1}, d^{\pm 1}] \rightarrow \ \LLNJ \doublearrow \ \LLNW
\end{aligned}
\end{equation}
and at the limit 
\begin{equation}\label{limits}
\begin{aligned}
& \hspace{30mm}\pi\\
&\Z[x^{\pm1}, d^{\pm 1}] \rightarrow \ \LLhJ \rightarrow \ \LLhW. \\
\end{aligned}
\end{equation}

\end{prop}
\begin{proof}
The relation presented in \eqref{levelNrings} comes from the fact that $$\tilde{I}^J_{\cN} \subseteq \tilde{I}^W_{\cN}. $$ Then, the map $\pi: \LLhJ \rightarrow \  \LLhW$
at the limit comes from the property that we have an induced map between the limit rings.  
\end{proof}
Now we put these elements together and show the proof of Theorem \ref{RELATION} as follows.
\begin{proof}
 Our universal invariant is a limit of the invariants $\LLNJ$: $$\LLhJ:= \underset{\longleftarrow}{\mathrm{lim}} \ \LLNJ.$$
Here, we go back and we use the connection to representation theory. We use the representation theory of $U_q(sl(2))$ over $\Z[x^{\pm1},d^{\pm1}]$. There, there is a Verma module of infinite dimension $\hat{V}$, generated by an infinite family of vectors $\{v_0, v_1,...\}$. Then, for two fixed parameters $n,m$ there is the associated weight space:

(see \cite{Cr1}[Section 2.2, Section 2.3])
$$\hat{V}_{n,m}=\langle  v_{e_1}\otimes... \otimes v_{e_n}| e_1+...+e_{n}=m\rangle.
$$
Further, if we fix a level $\cN$, the $\cN^{th}$ finite weight space over two parameters is given by:
$$V^{\cN}_{n,m}\subseteq \hat{V}_{n,m}$$ 
$$V^{\cN}_{n,m}=\langle  v_{e_1}\otimes... \otimes v_{e_n}| e_1+...+e_{n}=m, 0 \leq e_1,...,e_n \leq \cN\rangle.
$$

Since $\LLNJ$ is equipped with a projection map from $\LL$, the weight spaces $V^{\cN}_{n,m}$ can be specialised to weight spaces over this quotient ring. Let us denote them by $V^{\cN}_{n,m}\mid_{\LLNJ}$.
There is an induced $\beta_n$ action on these, which we denote by $\varphi^{\cN}(\beta_n)$. In the next part we denote by $qtr_{2,..,n}$ the partial quantum trace of a braid action (we refer to \cite{M2} for the precise formula).

Our invariant $\IJJ$ for a knot $K=\hat{\beta_n}$ corresponds to a sum of quantum partial traces of the induced $\beta_n$ actions $\varphi^{\cN,m}(\beta_n)$ on all the weight spaces over the quotient ring $\LLNJ$:  $$\{V^{\cN}_{n,m}\mid_{\LLNJ} \mid 0 \leq m \leq (n-1)(\cN-1)\}.$$ More precisely, we have:
\begin{equation}\label{form}
\PAAJ= \sum_{m=0}^{(n-1)(\cN-1)} qtr_{2,..,n}\left(\varphi^{\cN,m}(\beta_n)\right)(v_0).
\end{equation}

We have as well a well-defined induced action on the weight spaces $\hat{V}_{n,m}$. We denote it by $\hat{\varphi}^{m}(\beta_n)$.

\begin{lem}
If we fix $\cN \in \N$, then for any $m \in \N$, the partial trace of the generic braid action $\hat{\varphi}^{m}(\beta_n)$ specialised over the quotient ring $\LLNJ$, becomes equal to the partial trace of the action on the level $\cN$ weight space: 
\begin{equation}\label{annulspec}
qtr_{2,..,n}\left(\varphi^{\cN,m}(\beta_n)\right)(v_0)=\left(qtr_{2,..,n}\hat{\varphi}^{m}(\beta_n)\right)(v_0)\mid_{\LLNJ}, \forall m\in \N.
\end{equation}
\end{lem}
The proof of this fact follows in an analog manner as the proof of Lemma \ref{pr2}. The main idea is that if we have the partial trace with respect to $v_0$, and we work with braids that give knots by braid closure, then the extra vectors from $\hat{V}_{n,m}$ which do not belong to $V^{\cN}_{n,m}$ get coefficients that belong to the ideal $\tilde{I}^J_{\cN}$, and so they get traced by zero when looking in the quotient ring $\LLNJ$.

Folowing \cite{W}, the Habiro invariant \cite{H2} is the sum of traces of generic representations on weight spaces on the Verma module:
\begin{equation}
\IW= \sum_{m} qtr_{2,..,n}\left(\hat{\varphi}^{m}(\beta_n)\right)(v_0).
\end{equation}
Let us denote by $\IW_{\cN}$ the $\cN^{th}$ component of $\IW$ in $\LLNW$.
Then, $\IW_{\cN}$ is obtained by considering the image $\IW$ in the corresponding quotient ring:
$$\IW_{\cN}=\IW\mid_{\LLNW}= \sum_{m} qtr_{2,..,n}\left(\hat{\varphi}^{m}(\beta_n)\right)(v_0)\mid_{\LLNW}.$$
Since $\tilde{I}^J_{\cN} \subseteq \tilde{I}^W_{\cN}$ and using formulas \eqref{form} and \eqref{annulspec} for our invariant at level $\cN$ we have: 
\begin{equation}
\begin{aligned}
\IW_{\cN}&=\IW \mid_{\LLNW}=\\
&=\sum_{m} qtr_{2,..,n}\left(\hat{\varphi}^{m}(\beta_n)\right)(v_0)\mid_{\LLNW}=\\
&= \pi_{\cN}\left(\sum_{m} qtr_{2,..,n}\left(\hat{\varphi}^{m} (\beta_n)\right)(v_0)\mid_{\LLNJ}\right)=\\
&= \pi_{\cN}\left(\sum_{m} qtr_{2,..,n}\left(\varphi^{\cN,m}(\beta_n)\right)(v_0)\right)=\\
&= \pi_{\cN}\left(\PAAJ\right).
\end{aligned}
\end{equation}

This means that $ \pi_{\cN}\left(\PAAJ\right)=\IW_{\cN}$ for all levels $\cN$, so our universal geometric Jones universal invariant recovers Habiro's universal invariant \cite{H2}, if we project onto the ring $\LLhW$:
$$\pi: \LLhJ \rightarrow \  \LLhW$$
\begin{equation}
\pi(\IJJ)=\IW. 
\end{equation} 
which concludes Theorem \ref{RELATION}.
\end{proof}

\begin{figure}[H]
\begin{equation*}
\begin{aligned}
&\hspace{20mm}\LLhJ= \underset{\longleftarrow}{\mathrm{lim}} \ \LLNJ  \\
& \LLNJ= \Z[x^{\pm1}, d^{\pm 1}] /\langle \prod_{i=1}^{\cN} (xd^{i-1}-1) \rangle.
\end{aligned}
\end{equation*}
\begin{center}
\hspace*{-30mm}\begin{tikzpicture}
[x=1.2mm,y=1.4mm]
\node (Jl)               at (-80,10)    {$ \LLNJ \ni \PAAJ$};

\node (Wn)               at (-85,-30)    {$\LLNW$};

\node (Jn)               at (-70,0)    {$J_{\cN}(K)$};
\node (Jn-1)               at (-70,-10)    {$J_{\cN-1}(K)$};
\node (Jm)               at (-70,-20)    {$J_{\cM}(K)$};

\node (A'n)               at (-20,0)    {$\frac{\AN}{\Delta(x^{2\cN})}$};
\node (A'n-1)               at (-20,-10)    {$\frac{\Phi^{\cN-1}(K)}{\Delta(x^{2(\cN-1})}$};
\node (A'm)               at (-20,-20)    {$\frac{\AM}{\Delta(x^{2\cM})}$};

\node(IJ)[draw,rectangle,inner sep=3pt,color=red] at (-80,15) {$\cN^{th}$ unified Jones invariant};

\node(UJ)[draw,rectangle,inner sep=3pt,color=red] at (-60,30) {Universal Coloured Jones invariant};
\node[draw,rectangle,inner sep=3pt,color=blue] at (-40,-35) {Habiro  universal invariant \cite{H2},\cite{W}};

\node (J)               at (-50,25)    {$\IJJ \in \LLhJ$};
\node (W)               at (-50,-30)    {$\IW \in \LLhW$};

\draw[->,dashed]             (Wn)      to node[right,xshift=2mm,font=\large]{}   (Jn);
\draw[->,dashed]             (Wn)      to node[right,xshift=2mm,font=\large]{}   (Jn-1);
\draw[->,dashed]             (Wn)      to node[right,xshift=2mm,font=\large]{$\nexists$}   (Jm);

\draw[->]             (Jl)      to node[right,xshift=2mm,font=\large]{}   (Jn);
\draw[->]             (Jl)      to node[right,xshift=2mm,font=\large]{}   (Jn-1);
\draw[->]             (Jl)      to node[right,xshift=2mm,font=\large]{}   (Jm);

\draw[->]             (J)      to node[right,xshift=2mm,font=\large]{}   (Jn);
\draw[->]             (W)      to node[right,xshift=2mm,font=\large]{}   (Jn);

\draw[->]             (J)      to node[right,xshift=2mm,font=\large]{}   (A'n);
\draw[->]             (W)      to node[right,xshift=2mm,font=\large]{}   (A'n);


\draw[->]             (J)      to node[right,xshift=2mm,font=\large]{$\pi$}   (W);
\draw[->,dashed, red]             (IJ)      to node[right,xshift=2mm,font=\large]{$\underset{\longleftarrow}{\mathrm{lim}}$}   (UJ);

\end{tikzpicture}
\hspace*{-20mm}\begin{equation}
\begin{aligned}
& \LLhW=\underset{\longleftarrow}{ \mathrm{lim }} \ \LLNW  \\ 
&\hspace{-40mm}\LLNW= \Z[x^{\pm1}, d^{\pm 1}] / \langle \prod_{i=1}^{k-1} (xd^{i-1}-1) \prod_{i=k-1}^{\cN-1} (d^i-1) \mid 1\leq k \leq \cN-1 \rangle.\\
\end{aligned}
\end{equation}
\end{center}
\caption{The two universal invariants}
\end{figure}

 {\itshape Universit\'e Clermont Auvergne-LMBP, Campus des C\'ezeaux 3, place Vasarely, 63178 Aubi\`ere, France; Institute of Mathematics “Simion Stoilow” of the Romanian Academy, 21 Calea Grivitei Street, 010702 Bucharest, Romania.}

\

{\itshape cristina.anghel@uca.fr, cranghel@imar.ro}

\noindent \href{http://www.cristinaanghel.ro/}{www.cristinaanghel.ro}

\end{document}